\newtheorem{theorem}{Theorem}[section]
\newtheorem{lemma}[theorem]{Lemma}
\newtheorem{proposition}[theorem]{Proposition}
\newtheorem{corollary}[theorem]{Corollary}
\newtheorem{definition}[theorem]{Definition}
\newcommand{\ep}{\varepsilon}
\title{On the Gibbs states of the noncritical Potts model on $\Z^2$}
\author{Loren Coquille, Hugo Duminil-Copin, Dmitry Ioffe and Yvan Velenik}
\date{\today}
\newcommand{\FK}{\mu_{\Lambda_n}^{\sff}}
\newcommand{\FKflower}{\mu_{\mathcal D}^{\text{flower}}}
\newcommand{\FKff}{\mu_{\mathcal D}^{\sff}}
\newcommand{\Cond}{\;  | \; {\rm Cond}_n[\sigma]}
\newcommand{\cond}{{\rm Cond}_n[\sigma]}
\newcommand{\R}{\mathbb{R}}
\newcommand{\Z}{\mathbb{Z}}
\newcommand{\ubbG}{\underline{\bbG}}
\renewcommand{\emptyset}{\varnothing}
\newcommand{\Angle}[2]{\measuredangle(#1,#2)}
\newcommand{\bfB}{\mathbf{B}}
\newcommand{\abs}[1]{\left| #1\right|}
\newcommand{\calC}{\mathcal{C}}
\newcommand{\calD}{\mathcal{D}}
\newcommand{\calE}{\mathcal{E}}
\newcommand{\calF}{\mathcal{F}}
\newcommand{\calG}{\mathcal{G}}
\newcommand{\calK}{\mathcal{K}}
\newcommand{\calM}{\mathcal{M}}
\newcommand{\calP}{\mathcal{P}}
\newcommand{\calR}{\mathcal{R}}
\newcommand{\calS}{\mathcal{S}}
\newcommand{\calT}{\mathcal{T}}
\newcommand{\calY}{\mathcal{Y}}
\newcommand{\frt}{\mathfrak{t}}
\newcommand{\frB}{\mathfrak{B}}
\newcommand{\bbG}{\mathbb{G}}
\newcommand{\bbN}{\mathbb{N}}
\newcommand{\bbP}{\mathbb{P}}
\newcommand{\bbR}{\mathbb{R}}
\newcommand{\bbV}{\mathbb{V}}
\newcommand{\bbZ}{\mathbb{Z}}
\newcommand{\sfw}{{\sf w}}
\newcommand{\sff}{{\sf f}}
\newcommand{\sfC}{{\sf C}}
\newcommand{\lb}{\left(}
\newcommand{\rb}{\right)}
\newcommand{\lbr}{\left\{}
\newcommand{\rbr}{\right\}}
\newcommand{\lra}{\leftrightarrow}
\newcommand{\slra}[1]{\stackrel{#1}{\longleftrightarrow}}
\newcommand{\dd}{{\rm d}}
\newcommand{\U}{{\mathbf U}_\tau}
\newcommand{\B}{{\mathbf B}}
\newcommand{\BB}{\hat{\mathbf B}}
\newcommand{\epp}{{\varepsilon^\prime}}
\newcommand{\step}[1]{\textsc{Step}\,#1}
\newcommand{\case}[1]{\textsc{Case}\,#1}
\newcommand{\be}[1]{\begin{equation}\label{#1}}
\newcommand{\ee}{\end{equation}}
\def\1{\ifmmode {1\hskip -3pt \rm{I}}
\else {\hbox {$1\hskip -3pt \rm{I}$}}\fi} 
\newcommand{\smo}[1]{{\mathrm o}_{#1}(1)}
\newcommand{\df}{\stackrel{\Delta}{=}}
\begin{document}
\maketitle

\begin{abstract}
We prove that all Gibbs states of the $q$-state nearest neighbor Potts model on
$\mathbb{Z}^2$ below the critical temperature are convex combinations of the $q$
pure phases; in particular, they are all translation invariant. To achieve this
goal, we consider such models in large finite boxes with arbitrary boundary
condition, and prove that the center of the box lies deeply inside a pure phase
with high probability. Our estimate of the finite-volume error term is of
essentially optimal order, which stems from the Brownian scaling of fluctuating
interfaces. The results hold at any supercritical value of the inverse
temperature $\beta >\beta_c (q) = \log\lb 1+\sqrt{q}\rb$.
\end{abstract}

\noindent
\textbf{Keywords:} Potts model, Gibbs states, DLR equation, Aizenman-Higuchi
theorem, translation invariance, interface flucutuations, pure phases\\
\textbf{MSC2010:} 60K35, 82B20, 82B24

\section{Introduction}

\subsection{History of the problem}

Since the seminal works of Dobrushin and Lanford-Ruelle~\cite{Dob68,LanRue69},
the equilibrium states of a lattice model of statistical mechanics in the
thermodynamic limit --- the so-called Gibbs states --- are identified with the
probability measures $\mu$ that are solutions of the DLR equation,
\[
\mu(\cdot)=\int\mathrm{d}\mu(\omega)\gamma_\Lambda(\cdot\,\vert\,\omega),
\qquad\text{for all finite subsets $\Lambda$ of the lattice,}
\]
where the probability kernel $\gamma_\Lambda$ is the Gibbsian specification
associated to the system; see~\cite{Geo88}. Under very weak assumptions 
(at least for bounded spins),
it can be shown that the set $\mathcal{G}$ of all Gibbs states is a non-empty
simplex. The analysis of $\mathcal{G}$ is thus reduced to determining its
extremal elements. In general, this is a very hard problem which remains
essentially completely open in dimensions $3$ and higher, for any nontrivial
model, even in perturbative regimes.

The problem of determining all extremal Gibbs states amounts to understanding
all possible local behaviors of the system. Pirogov-Sina{\u\i}'s
theory~\cite{PirSin75,Zah84} often allows, at very low temperatures, to
determine the pure phases of the model, i.e., the extremal, translation
invariant (or periodic) Gibbs states, as perturbations of the corresponding
ground states.
However, it might be the case that suitable boundary conditions induce
interfaces resulting in the \emph{local} coexistence of different thermodynamic
phases. That such a phenomenon can occur was first proved for the
nearest-neighbor ferromagnetic (n.n.f.) Ising model on $\mathbb{Z}^3$ by
Dobrushin~\cite{Dob72}, by considering the model in a cubic box with $+$ spins
on the  top half boundary of the box and $-$ spins on the bottom half (the
so-called Dobrushin boundary condition). He proved that, at low enough
temperatures, the induced interface is rigid --- it is given by a plane with
local defects --- and the corresponding Gibbs state is extremal.

In two dimensions, the situation is very different. Gallavotti~\cite{Gal72}
proved, by studying the fluctuations of the corresponding interface, that the
Gibbs state of the (very low temperature) n.n.f. Ising model on $\mathbb{Z}^2$
obtained using the Dobrushin boundary condition is the mixture $\tfrac12(\mu^+ +
\mu^-)$, where $\mu^+$ and $\mu^-$ are the two pure phases of the Ising model.
This was refined by Higuchi~\cite{Hig79}, who proved that the interface, after
diffusive scaling, weakly converges to a Brownian bridge at sufficiently low
temperatures. These two results were then pushed to all subcritical temperatures
by, respectively, Messager and Miracle-Sole~\cite{MesMir77} and Greenberg and
Ioffe~\cite{GreIof05}.
A weaker but very simple and general proof of the non-extremality of the
state obtained using Dobrushin boundary condition can be found in~\cite{BLP79}.

The fact that the Dobrushin boundary condition gives rise to a translation
invariant Gibbs state is a strong indication that all Gibbs states of the
two-dimensional Ising model should be translation invariant: because of the
large fluctuations of the interfaces, a small box deep inside the system should
remain, with high probability, far away from any of the interfaces that are
induced by the boundary condition. Thus the possible local behaviors of the
system should correspond to the pure phases.

In the late 1970s, this phenomenology was established in the celebrated works of
Aizenman~\cite{Aiz80} and Higuchi~\cite{Hig81}, based on important earlier work
of Russo~\cite{Rus79}. They proved that
$\mathcal{G}=\{\alpha\mu^++(1-\alpha)\mu^-\,:\,0\leq \alpha\leq 1\}$ for the
n.n.f. Ising model on $\mathbb{Z}^2$. Their approaches relied on many specific
properties of the Ising model (in particular, GKS, Lebowitz and FKG inequalities
were used in the proof). A decade ago, Georgii and Higuchi~\cite{GeoHig00}
devised a variant of this proof with a number of advantages. In particular,
their version only relies on the FKG inequality (and some lattice symmetries),
which made it possible to obtain in the same way a complete description of Gibbs
states in several other models: the n.n.f. Ising model on the triangular and
hexagonal lattices, the antiferromagnetic Ising model in an homogeneous field
and the hard-core lattice gas. It should be emphasized that  all these works
deal directly with the infinite-volume system, and have only very weak
implications for large finite systems. In particular, the reasoning underlying
these arguments (taking the form of a proof by contradiction) remains far from
the heuristics of interfaces fluctuations.

A much more general result, restricted to very low temperatures, was established
by Dobrushin and Shlosman~\cite{DobShl85}. They proved that, under suitable
assumptions (finite single-spin space, bounded interactions, finite number of
periodic ground states), all Gibbs states are periodic, and in particular are
convex combinations of the pure phases corresponding to perturbations of the
ground states of the model. Their approach deals with finite systems and is
closer in spirit, if not fully in practice, to the above heuristics. Namely,
even though interface fluctuations play a central role in the approach
of~\cite{DobShl85}, the authors resort to crude low temperature surgery
estimates without developing a comprehensive fluctuation theory.

Very recently, a completely different approach to the Aizenman-Higuchi result
was developed by two of us~\cite{CoqVel10}. This new approach, although still
restricted to the n.n.f. Ising model on $\mathbb{Z}^2$, presents several
advantages on the former ones. Unlike \cite{DobShl85} it does  not require a
very low temperature assumption, and actually holds for all sub-critical
temperatures.  Furthermore,  it provides a quantitative, finite-volume version
of the Aizenman-Higuchi theorem, with the correct rate of relaxation. Another
interesting feature of the proof is that it closely follows the outlined
heuristics and, consequently, should be much more robust.

In the present work, we extend the approach of~\cite{CoqVel10} to n.n.f. Potts
models on $\mathbb{Z}^2$. As will be seen below, two major factors make the
proof substantially more difficult in this case. The first one is of a physical
nature: In all previous non-perturbative studies, there were only two pure
phases, and thus macroscopic interfaces were always line segments. In the Potts
model with $3$ or more states, there are more than two phases and, consequently,
interfaces are more complicated objects, elementary macroscopic interfaces being
trees rather than lines. The second difficulty is of a technical nature: The
positive association of Ising spins, manifested through the FKG inequality,
simplified many parts of the proof in~\cite{CoqVel10}. Unfortunately, this
property does not hold anymore in the context of general $q$-state Potts models.
We will therefore avoid this difficulty by reformulating the problem in terms of
the random-cluster representation.

\subsection{Statement of the results}

Let $\Omega=\{1,\ldots,q\}^{\Z^2}$ be the space of configurations. Let $\Lambda$
be a finite subset of $\Z^2$, and $\Lambda^c=\Z^2\setminus\Lambda$ be its
complement. The \emph{finite-volume Gibbs measure} in $\Lambda$ for the
$q$-state Potts model with boundary conditions $\sigma\in\Omega$ and at
inverse-temperature $\beta>0$ is the probability measure on $\Omega$ (with the
associated product $\sigma$-algebra) defined by
\[
\mathbb P_{\beta,\Lambda}^\sigma(\eta)
=
\begin{cases}
\frac{1}{Z^\sigma_{\beta,\Lambda}}{\rm e}^{-\beta H_\Lambda(\eta)}& \text{ if
$\eta_i=\sigma_i$ for all $i\in \Lambda^c$}\\
0 &\text{ otherwise},
\end{cases}
\]
where the normalization constant $Z^\sigma_{\beta,\Lambda}$ is the partition
function. The Hamiltonian in $\Lambda$ is given by
\[
H_\Lambda(\eta) = -\sum_{\substack{i\sim
j\\\{i,j\}\cap\Lambda\neq\emptyset}}\delta_{\eta_i,\eta_j}
\]
where $i\sim j$ if $i$ and $j$ are nearest neighbors in $\Z^2$. In the case of
pure boundary condition $i \in\lbrace 1,\ldots, q\rbrace$, meaning that
$\sigma_x=i$ for every $x\in \Lambda^c$, we denote the measure by $\mathbb
P_{\beta,\Lambda}^{(i)}$.

For an arbitrary subset $A$ of $\Z^2$, let $\calF_A$ be the sigma-algebra
generated by spins in $\Lambda$. A probability measure $\mathbb{P}$ on $\Omega$
is an \emph{infinite-volume Gibbs measure} for the $q$-state Potts model at
inverse temperature $\beta$ if and only if it satisfies the following DLR
condition:
\[
\mathbb{P}(\cdot|\mathcal F_{\Lambda^c})(\sigma)
=
\mathbb{P}^\sigma_{\beta,\Lambda}\qquad\text{ for $\mathbb{P}$-a.e. $\sigma$,
and all finite subsets $\Lambda$ of $\mathbb{Z}^2$}.
\]
Let $\calG_{q,\beta}$ be the space of infinite-volume $q$-state Potts measures.

Non-emptiness of $\calG_{q,\beta}$ can be proved constructively in this model.
For $i\in\{1,\ldots,q\}$, ${(\mathbb{P}^{(i)}_{\beta,\Lambda})}_\Lambda$
converges when $\Lambda\nearrow\Z^2$ (in particular, the limit does not depend
on the sequence of boxes chosen); this follows easily, e.g., from the random
cluster representation. We denote by $\mathbb{P}^{(i)}_\beta$ the corresponding
limit. It can be checked~\cite[Prop.~6.9]{GeoHagMae01} that the measures
$\mathbb{P}^{(i)}_\beta$ ($i=1,\ldots,q$) belong to $\calG_{q,\beta}$ and are
translation invariant.

\bigskip
When $\beta$ is less than the critical inverse temperature
$\beta_c(q)=\log(1+\sqrt{q})$~\cite{BefDum10}, it is known that there exists a
unique infinite-volume Gibbs measure (in particular $\mathbb
P^{(i)}_\beta=\mathbb P^{(j)}_\beta$ for every $i,j\in \{1,\ldots,q\}$). The
relevant values of $\beta$ for a study of $\calG_{\beta,q}$ are thus $\beta\ge
\beta_c(q)$.

In the present work, we extend ideas of~\cite{CoqVel10} in order to determine
all infinite-volume Gibbs measures for the $q$-state Potts models at inverse
temperature $\beta>\beta_c(q)$ on $\Z^2$. More precisely, we show that every
Gibbs state is a convex combination of infinite-volume measures with pure
boundary condition:
\begin{theorem}\label{main}
For any $q\ge2$ and $\beta>\beta_c(q)$,
\begin{equation} 
\label{eq:decomp-main}
\calG_{q,\beta}
=
\Bigl\{\sum_{i=1}^q\alpha_i\mathbb P_\beta^{(i)},\mbox{ where
}\alpha_i\ge0,\forall i\in\{1,\ldots, q\} \text{ and
}\sum_{i=1}^q\alpha_i=1\Bigr\}.
\end{equation}
\end{theorem}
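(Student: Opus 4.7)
The plan is to prove Theorem~\ref{main} by first establishing a sharp finite-volume approximation and then passing to the limit via the DLR equations. Let $B\subset\Z^2$ be a fixed finite box and let $\Lambda_n$ be a large box centered at the origin containing $B$. For any Gibbs state $\Proba\in\calG_{q,\beta}$, the DLR condition gives
\[
\Proba(A) = \int \Proba_{\beta,\Lambda_n}^\sigma(A)\,\mathrm{d}\Proba(\sigma) \qquad\text{for every } A\in\calF_B.
\]
It therefore suffices to prove that for every such local event there exist (random) weights $\alpha_1(\sigma),\dots,\alpha_q(\sigma)\ge 0$ with $\sum_i\alpha_i(\sigma)=1$ such that
\[
\Proba_{\beta,\Lambda_n}^\sigma(A) \;=\; \sum_{i=1}^q \alpha_i(\sigma)\,\Proba^{(i)}_\beta(A) \;+\; \smo{n},
\]
uniformly in the boundary condition $\sigma$. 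Integrating against $\Proba$ and using backward martingale / dominated convergence to handle the $\alpha_i(\sigma)$ then yields the decomposition~\eqref{eq:decomp-main}, since the extremal translation invariant measures are the $\Proba^{(i)}_\beta$.

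The heart of the matter is thus to show that, regardless of $\sigma$, the spins inside $B$ are distributed as a convex combination of the $q$ pure phases up to an error $\smo{n}$. Because the Potts spin system loses FKG when $q\ge 3$, we plan to work throughout in the Edwards--Sokal / random-cluster representation: jointly sample an FK configuration and then colour each cluster uniformly in $\{1,\dots,q\}$, with the constraint that every cluster touching $\Lambda_n^c$ inherits its colour from $\sigma$. The phase observed in $B$ is then dictated by the colours of the FK clusters that intersect $B$, and the event we need to control is that some FK cluster reaching $B$ also touches the boundary (or equivalently, in the dual picture, that some dual interface separating colours passes through a neighbourhood of $B$). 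The FK representation retains FKG and admits planar duality, both of which will be used heavily.

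The core geometric step is the analysis of the interfaces induced by $\sigma$. The boundary $\partial\Lambda_n$ is partitioned by $\sigma$ into finitely many monochromatic arcs; every two adjacent arcs of distinct colours seed a dual interface that must propagate into $\Lambda_n$. When $q=2$, each such interface is a single curve and~\cite{CoqVel10} shows via diffusive $\sqrt n$-fluctuations and entropy over endpoint locations that $B$ is avoided with probability $1-\bigo(|B|/\sqrt n)$. For $q\ge 3$, several interfaces may meet at interior triple points, so the full interface pattern is a planar tree (or forest) on the boundary arcs, of Steiner-tree type. The plan is to: \textbf{(i)} enumerate the admissible topological types of such trees; \textbf{(ii)} for each type, use the FK representation, duality and FKG to rewrite the relevant event as a ratio of random-cluster partition functions with Dobrushin-type boundary conditions; \textbf{(iii)} apply sharp Ornstein--Zernike / Brownian-bridge estimates (in the spirit of Campanino--Ioffe--Velenik and Greenberg--Ioffe, available for all $\beta>\beta_c(q)$) to show that each branch fluctuates on scale $\sqrt n$, so that the probability that the tree enters $B$ is $\bigo(|B|/\sqrt n)$; \textbf{(iv)} sum over topological types, paying a combinatorial price that is absorbed by the exponential weight coming from the (strictly positive) surface tensions of the branches.

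The principal obstacle is the combination of steps (iii) and (iv) in the presence of branching. One must control the joint law of several interfaces that share a Steiner point, prove a sharp invariance-principle-type statement for the whole tree uniformly in the number of branches and in the location of the branching points, and then sum over all admissible topologies without losing the optimal rate $|B|/\sqrt n$. This is precisely the step that forces us to abandon the spin picture, work inside the FK representation, exploit its FKG inequality and planar duality, and rely on the recent non-perturbative random-cluster technology valid throughout $\beta>\beta_c(q)$. Once such a finite-volume estimate is obtained, extracting the weights $\alpha_i(\sigma)$ as a.s.\ limits along $n\to\infty$ and verifying compatibility with DLR is a standard martingale argument.
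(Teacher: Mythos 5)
Your proposal follows essentially the same route as the paper: reduce Theorem~\ref{main} to a uniform finite-volume statement (the paper's Theorem~\ref{main theorem}), work in the FK representation to recover FKG and planar duality, recognize that the induced interfaces form Steiner-type trees, and control them via Ornstein--Zernike / diffusive fluctuation estimates together with positive surface tension, which is exactly the paper's strategy (with the technical reduction to ``flower domains'' and Steiner forests spelled out in Sections~3--5). The concluding extraction of the weights $\alpha_i$ via DLR and a compactness/martingale argument is also how the paper passes from the quantitative finite-volume bound to the infinite-volume decomposition.
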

\noindent
A straightforward yet  important corollary of this theorem is the fact that any
Gibbs state is invariant under translations.
\begin{corollary}
For any $q\ge2$ and $\beta>\beta_c(q)$, all elements of $\calG_{q,\beta}$ are
invariant under translations.
\end{corollary}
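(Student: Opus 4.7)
The proof plan is essentially a one-line reduction to Theorem~\ref{main}, so there is no genuine obstacle: the corollary is an immediate packaging of already-established facts. I will simply combine the classification of Gibbs states proved in Theorem~\ref{main} with the translation invariance of each pure phase $\mathbb{P}^{(i)}_\beta$ stated earlier in the introduction.

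More concretely, let $\mu \in \calG_{q,\beta}$ be arbitrary. By Theorem~\ref{main}, there exist nonnegative weights $\alpha_1,\ldots,\alpha_q$ with $\sum_{i=1}^q \alpha_i = 1$ such that
\[
\mu = \sum_{i=1}^{q} \alpha_i \, \mathbb{P}^{(i)}_\beta.
\]
For any lattice translation $T:\Z^2\to\Z^2$, let $\mu\circ T^{-1}$ denote the pushforward of $\mu$ under the induced map on configurations. Linearity of the pushforward yields
\[
\mu \circ T^{-1} = \sum_{i=1}^{q} \alpha_i \, \bigl( \mathbb{P}^{(i)}_\beta \circ T^{-1} \bigr).
\]

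Each pure-phase measure $\mathbb{P}^{(i)}_\beta$ was constructed in the excerpt as the thermodynamic limit of the finite-volume measures $\mathbb{P}^{(i)}_{\beta,\Lambda}$, and the reference to~\cite[Prop.~6.9]{GeoHagMae01} asserts precisely that these limits are translation invariant, i.e., $\mathbb{P}^{(i)}_\beta \circ T^{-1} = \mathbb{P}^{(i)}_\beta$ for every $i$. Substituting this back gives $\mu\circ T^{-1} = \mu$, which is the desired translation invariance. Thus the entire content of the corollary lies inside Theorem~\ref{main} together with the a~priori known invariance of the pure phases; no further argument is required.
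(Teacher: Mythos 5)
Your argument is exactly the intended one: the paper gives no explicit proof, remarking only that the corollary is ``straightforward,'' and the straightforward route is precisely the one you spell out, namely decompose via Theorem~\ref{main} and use the translation invariance of the pure phases $\mathbb{P}^{(i)}_\beta$ noted earlier via \cite[Prop.~6.9]{GeoHagMae01}. Correct and complete.
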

A second important corollary is the fact that the extremal Gibbs measures (also
called pure states) of the simplex $\calG_{q,\beta}$ are the infinite-volume
measures with pure boundary condition.
\begin{corollary}\label{cor:pure state}
For any $q\ge 2$ and $\beta>\beta_c(q)$, the extremal elements of
$\calG_{q,\beta}$ are the $\mathbb P_\beta^{(i)}, i\in\{1,\dots,q\}$.
\end{corollary}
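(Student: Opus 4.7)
The plan is to derive this corollary directly from Theorem \ref{main} by combining two observations: (a) the extremal elements of the convex hull of finitely many measures are to be sought among those measures themselves, and (b) the $q$ pure phases $\mathbb{P}_\beta^{(1)},\ldots,\mathbb{P}_\beta^{(q)}$ are affinely independent, in the sense that none is a convex combination of the others. The only nontrivial input beyond Theorem \ref{main} is (b), which will follow from the strict positivity of the spontaneous magnetization for $\beta>\beta_c(q)$.

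For the first direction, suppose $\mathbb{P}\in\calG_{q,\beta}$ is extremal. By Theorem \ref{main}, $\mathbb{P}=\sum_i\alpha_i\mathbb{P}_\beta^{(i)}$ with $\alpha_i\ge 0$ and $\sum_i\alpha_i=1$. If some $\alpha_j$ lies strictly between $0$ and $1$, then
\[
\mathbb{P}
= \alpha_j\mathbb{P}_\beta^{(j)}
+ (1-\alpha_j)\sum_{i\neq j}\frac{\alpha_i}{1-\alpha_j}\mathbb{P}_\beta^{(i)}
\]
expresses $\mathbb{P}$ as a nontrivial convex combination of two elements of $\calG_{q,\beta}$ (the second summand being again in $\calG_{q,\beta}$, as a convex combination of its elements), so extremality forces $\mathbb{P}=\mathbb{P}_\beta^{(j)}$. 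Hence the extremal elements of $\calG_{q,\beta}$ lie in $\{\mathbb{P}_\beta^{(1)},\ldots,\mathbb{P}_\beta^{(q)}\}$.

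For the second direction, I would introduce the single-site marginals $M:=\mathbb{P}_\beta^{(i)}(\sigma_0=i)$ and, for $j\neq i$, $m:=\mathbb{P}_\beta^{(i)}(\sigma_0=j)=(1-M)/(q-1)$. By the color-permutation symmetry of the Potts Hamiltonian, both $M$ and $m$ are independent of the choice of $i$ and $j\neq i$. For $\beta>\beta_c(q)$, phase coexistence (distinctness of the $\mathbb{P}_\beta^{(i)}$) translates into $M\neq m$, and in fact $M>1/q>m$; this is a classical consequence of the random-cluster representation and supercritical edge-percolation. Now suppose $\mathbb{P}_\beta^{(i)}=\alpha\mu+(1-\alpha)\nu$ with $\mu,\nu\in\calG_{q,\beta}$ and $\alpha\in(0,1)$. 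Applying Theorem \ref{main} to both $\mu$ and $\nu$ and gathering coefficients gives $\mathbb{P}_\beta^{(i)}=\sum_j\lambda_j\mathbb{P}_\beta^{(j)}$ with nonnegative $\lambda_j$ summing to one. Evaluating on $\{\sigma_0=i\}$ yields $M=\lambda_iM+(1-\lambda_i)m$, which forces $\lambda_i=1$ and $\lambda_j=0$ for $j\neq i$. Nonnegativity of the coefficients in the expansions of $\mu$ and $\nu$, together with the fact that these coefficients average to the $\lambda_j$, then forces $\mu=\nu=\mathbb{P}_\beta^{(i)}$, proving extremality.

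I expect no serious obstacle here: the corollary is essentially a bookkeeping consequence of Theorem \ref{main}, and the only substantive side ingredient is the strict positivity of the spontaneous magnetization above $\beta_c(q)$, which is standard for the Potts model on $\mathbb{Z}^2$.
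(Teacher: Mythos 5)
Your proof is correct and takes essentially the same route as the paper: both reduce the claim to showing that the coefficients $\alpha_i$ in the decomposition of Theorem~\ref{main} are uniquely determined by the single-site marginal $\mathbb{P}(\eta_0=i)$, which in turn requires $\mathbb{P}_\beta^{(i)}(\eta_0=i)\neq\mathbb{P}_\beta^{(i)}(\eta_0=j)$ for $j\neq i$ (i.e.\ positive spontaneous magnetization above $\beta_c$). The paper packages this into the explicit formula $\alpha_i=\frac{\mathbb{P}_\beta(\eta_0=i)-\Delta}{\mathbb{P}_\beta^{(i)}(\eta_0=i)-\Delta}$, from which both inclusions follow at once, but the content is the same.
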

This follows from Theorem~\ref{main}:
{ Define $\Delta (\beta )$ via 
$ \bbP_\beta^{(i)}(\eta_0\neq i) = (q-1)\Delta$, and    observe that
in the decomposition \eqref{eq:decomp-main} of 
$\bbP_\beta\in\calG_{q,\beta}$, the coefficient  $\alpha_i$ equals to 
$\frac{\bbP_\beta(\eta_0=i)-\Delta }{\bbP_\beta^{(i)}(\eta_0=i)-\Delta}.$
}
\medbreak
\noindent
Actually, our main result is stronger than Theorem~\ref{main}. As
in~\cite{CoqVel10}, we obtain a finite-volume, quantitative version of the
latter theorem, which, together with its proof, fully vindicates the heuristics
given above. For a measure $\mu$ and an integrable function $f$, we write
$\mu[f]=\int f\mathrm{d}\mu$.
\begin{theorem}\label{main theorem}
Let $q\ge 2$ and $\beta>\beta_c(q)$, and set $\Lambda_n=\Z^2\cap[-n,n]^2$. For
any $\varepsilon>0$ small enough, there exists $C_\varepsilon<\infty$ such that, for any
boundary condition $\sigma$ on $\partial \Lambda_n$, we can find
$\alpha_1^n,\ldots,\alpha_q^n\ge0$ depending on $(n,\sigma,\beta,q)$ only, such
that
\[
\bigl|\,\mathbb P_{\Lambda_n,\beta}^\sigma[g] - \sum_{i=1}^q\alpha_i^n\, \mathbb
P_{\beta}^{(i)}[g] \bigr|
\le
C_\varepsilon\|g\|_\infty n^{-\tfrac12 +14\varepsilon},
\]
for any measurable function $g$ of the spins in $\Lambda_{n^\varepsilon}$.
\end{theorem}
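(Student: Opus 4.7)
The plan is to use the Edwards--Sokal / random-cluster (FK) representation to translate the question about spin correlations into a question about FK connectivity, and then to show that with probability at least $1-C_\varepsilon n^{-1/2+14\varepsilon}$ every FK cluster meeting the central square $\Lambda_{n^\varepsilon}$ is attached to the boundary only through monochromatic arcs of a single color $i\in\{1,\ldots,q\}$. On that event the conditional law of the spins in $\Lambda_{n^\varepsilon}$ coincides, up to an even smaller error, with that of the pure phase $\mathbb{P}_\beta^{(i)}$, and the coefficient $\alpha_i^n$ is simply the probability of this event.

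More precisely, I would first observe that $\sigma$ partitions $\partial\Lambda_n$ into maximal monochromatic arcs $A_1,\ldots,A_k$ with colors $c_1,\ldots,c_k$ satisfying $c_j\neq c_{j+1}$, so that $k$ is at most of order $n$. Via the FK coupling, clusters meeting $A_j$ must be colored $c_j$; clusters that would connect two arcs of different colors are forbidden; and clusters touching no boundary arc receive an independent uniform color. For $i=1,\ldots,q$ I would set $\alpha_i^n=\mathbb{P}^\sigma_{\Lambda_n,\beta}(E_i)$ where $E_i$ is the event that every FK cluster touching $\Lambda_{n^\varepsilon}$ meets only arcs of color $i$ (or no boundary arc at all). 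On $E_i$, the standard FK ratio-weak-mixing estimates valid in the supercritical regime ensure that the law of the spins inside $\Lambda_{n^\varepsilon}$ is within $o(n^{-1/2})$ of $\mathbb{P}_\beta^{(i)}$.

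The heart of the argument is therefore to show that the complement $\bigl(\bigcup_i E_i\bigr)^c$ has probability $\le C_\varepsilon n^{-1/2+14\varepsilon}$. On this bad event there must exist, in the FK dual, a phase interface joining two \emph{corners} of $\partial\Lambda_n$ (points separating two arcs of different colors) and approaching $\Lambda_{n^\varepsilon}$. Since $\beta>\beta_c(q)$, the dual random-cluster model is strictly subcritical; so such dual paths have exponentially-decaying two-point functions with Ornstein--Zernike asymptotics, and the probability that a single such interface with prescribed endpoints at macroscopic distance comes within $n^\varepsilon$ of the origin is of order $n^{-1/2+O(\varepsilon)}$, reflecting the Brownian-bridge scaling of its fluctuations. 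An energy--entropy estimate summing over the $O(n)$ corners, and over the possible shapes of the interface system, then yields the stated rate.

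The main new difficulty, compared to the Ising case of~\cite{CoqVel10}, is topological: for $q\geq 3$ the dual interfaces no longer need to form a simple curve joining two boundary corners, since three or more phases can coexist near $\Lambda_{n^\varepsilon}$ joined by trees of dual paths meeting at \emph{triple points} in the bulk. Consequently $\bigl(\bigcup_i E_i\bigr)^c$ has to be split into sub-events indexed by the topological type of the interface tree, each branch of the tree being then shown to satisfy its own Brownian-bridge-type estimate, while the combinatorial explosion of tree types is tamed by the exponential decay of the dual two-point function. Coupling the diffusive fluctuation theory of each individual branch with this combinatorial bookkeeping, uniformly over all supercritical $\beta$, is where I expect most of the technical effort to go; the $14\varepsilon$ loss in the exponent is a reflection of this multi-scale decomposition.
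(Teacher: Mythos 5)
Your high-level strategy matches the paper: pass to the random-cluster representation, isolate a bad event of probability $O(n^{-1/2+14\varepsilon})$ on which interfaces penetrate $\Lambda_{n^\varepsilon}$, and on the complement use relaxation to a pure phase. You also correctly identify the core technical difficulty --- for $q\ge 3$ the interfaces are Steiner-tree-like with triple points, and one needs a diffusive estimate for each branch together with combinatorial control over the number of tree topologies --- and this is exactly what the paper does via flower domains, forest skeletons and tripod fluctuation estimates.

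However, there is a genuine gap in your choice of decomposition. Your events $E_i$ overlap: the configuration in which no FK cluster touching $\Lambda_{n^\varepsilon}$ meets any boundary arc at all belongs to \emph{every} $E_i$, so $\sum_i\mathbb P(E_i)>1$ in general, and the $\alpha_i^n$ you propose do not form a sub-probability vector. More seriously, on $E_i$ the conditional law of the spins in $\Lambda_{n^\varepsilon}$ is \emph{not} close to $\mathbb P^{(i)}_\beta$: on the sub-event where $\Lambda_{n^\varepsilon}$ is cut off from all boundary arcs by a circuit of the (subcritical, primal) FK configuration, the law inside is close to the free-boundary Potts measure $\mathbb P^\sff$, which is a nontrivial mixture, not a pure phase. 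To repair this one must split the good event into two geometrically distinct scenarios: (a) the inner box is surrounded by an open primal circuit in the annulus, in which case the inner domain carries free FK boundary conditions and the Potts marginal there is close to $\mathbb P^\sff$; and (b) the inner box is linked (in the dual) to a single color class $S_i$, in which case the marginal is close to $\mathbb P^{(i)}$. The fact that $\mathbb P^\sff=\frac1q\sum_{i}\mathbb P^{(i)}$ is a separate lemma (true precisely because $\mu^\sfw_{(\Z^2)^*}=\mu^\sff_{(\Z^2)^*}$ in the supercritical off-critical regime) and is what converts scenario (a) into the desired convex combination with weight $\tfrac1q$ for each color. With that correction the coefficients become $\alpha_i^n=\tfrac1q\mathbb P(\text{circuit case})+\mathbb P(\text{color-}i\text{ case})$, which do sum to $1-O(n^{-1/2+14\varepsilon})$ as required.

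A second, smaller point: your claim that the mixing error inside $\Lambda_{n^\varepsilon}$ is $o(n^{-1/2})$ is true but needlessly weak; once the inner domain is surrounded by an FK circuit at scale $\Theta(n^\varepsilon)$, ratio strong mixing for the subcritical FK measure gives an error $O(e^{-cn^\varepsilon})$, which is needed so that the dominant error really does come from the interface-reaching event and not from the relaxation inside.
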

\noindent
Note that the error term is essentially of the right order (which is
$O(n^{-1/2})$); see~\cite{CoqVel10} for a proof of this claim when $q=2$.

\bigskip
The strategy of the proof is the following. We consider the conditioned
random-cluster measure on $\Lambda$ associated to the $q$-state Potts model with
boundary condition $\sigma$. Boundary conditions for the Potts model get
rephrased as absence of connections (in the random-cluster configuration)
between specified parts of the boundary of $\Lambda$. In other words, boundary
conditions for the Potts models correspond to conditioning on the existence of
dual-clusters between some dual-sites on the boundary. Note that the
conditioning can be very messy, since intricate boundary conditions correspond
to microscopic conditioning on existence of dual-clusters. It will be seen that
being a mixture of measures with pure boundary condition boils down to the fact
that, with high probability, no dual-cluster connected to the boundary reaches a
small box deep inside $\Lambda$ (which, in particular, implies that the same is
true for the Potts interfaces).

The techniques involved in the proof are two-fold. First, we use positivity of
surface tension in the regime $\beta>\beta_c$, which was proved
in~\cite{BefDum10}, in order to get rid of the microscopic mess due to the
conditioning and to show that, deep inside the box, the conditioning with
respect to $\sigma$ corresponds to the existence of macroscopic dual-clusters.
The second part of the proof consists in proving that these clusters are very
slim, and that they fluctuate in a diffusive way, so that the probability that
they touch a small box centered at the origin is going to zero as the size of
$\Lambda$ goes to infinity. The crucial step here is the use of the
Ornstein-Zernike {theory of sub-critical FK clusters  developed}
in~\cite{CamIofVel08}.

\subsection{Open problems}

Before delving into the proof, let us formulate some important open problems
related to the present study.

\noindent
$\blacktriangleright\;$\textit{Critical 2d Potts models.}
The behavior of two-dimensional $q$-state Potts models in the critical regime
$\beta=\beta_c(q)$ is still widely open. It is conjectured that there is a
unique Gibbs state when $q=3$ and $4$, but that, for $q\geq 5$, there is
coexistence at $\beta_c$ of $q+1$ pure phases: the $q$ low-temperature ordered
pure phases and the high temperature disordered phase. This is known to be true
when $q$ is large enough~\cite{LaaMesRui86,Mar86}. The extension of the latter
result to every $q>4$ remains a mathematical challenge. 

\noindent
$\blacktriangleright\;$\textit{Finite-range 2d models.}
The extension of the present result, even in the Ising case $q=2$, to general
finite-range interactions still seems out of reach today. There are, at least,
two main difficulties when dealing with such models: On the one hand, it is
difficult to find a suitable non-perturbative definition of interfaces (the
classical definitions used, e.g., in Pirogov-Sina{\u\i} theory become
meaningless once the temperature is not very low); on the other hand, interfaces
will not partition the system into (random) subsystems with pure boundary
conditions anymore, which implies that it will be necessary to understand
relaxation to pure phases from impure boundary conditions. Of course, the
general \emph{philosophy} of the approach we use should still apply.

\noindent
$\blacktriangleright\;$\textit{The question of quasiperiodicity.}
There is a general conjecture that two-dimensional models should always possess
a finite number of extremal Gibbs states, all of which are periodic. In
particular, this would imply that all Gibbs states are periodic, and thus that
a two-dimensional quasicrystal cannot exist (as an equilibrium state).

\noindent
$\blacktriangleright\;$\textit{Models in higher dimensions.}
Needless to say, the situation in higher dimensions is 
very different, due to the
existence of translation non-invariant states. Even in the very low-temperature
$3$-dimensional n.n.f. Ising model, the set of extremal Gibbs states is not
known. Note, however, that it has been proved, in the case of a $d$-dimensional
Ising model for any $d\geq 3$, that all \emph{translation invariant} Gibbs
states are convex combinations of the two pure phases at all
temperatures~\cite{Bod06}. A similar result also holds for large enough values
of $q$~\cite{Mar86}.

\subsection{Notations}
Each nearest-neighbor edge $e$ of $\Z^2$ intersects a unique dual edge of
$(\Z^2)^* = (\frac12 ,\frac12) +\Z^2$, that we denote by $e^*$. Consider a
subgraph $G=(V,E)$ of $\Z^2$, with vertex set $V$ and edge set $E$. If $E$ is a
set of direct edges, then its dual is defined by $E^* = \lbr e^*\, :\, e\in
E\rbr$.  Furthermore, if $G$ does not possess any isolated vertices, we can
define the dual $V^*$ as the endpoints of edges in $E^*$. Altogether, this
defines a dual graph $G^*=(V^*,E^*)$.

Let $\Lambda_n$ be the set of sites of $\Z^2\cap[-n,n]^2$ and $E_n$ be the set
of all nearest-neighbor edges of $\Lambda_n$. The dual graph is denoted by $\lb
\Lambda_n^* , E_n^*\rb$. For $m<n$, the annulus $\Lambda_n\setminus\Lambda_m$ is
denoted by $A_{m,n}$.

\noindent
The \emph{vertex-boundary} $\partial V$ of a graph $(V,E)$ is defined by
$\partial V = \lbr x\in V: \exists y\sim x\ \text{such that $y\not\in V$}\rbr$.

\noindent
{The \emph{exterior vertex-boundary} $\partial^{\rm ext} V$ of a graph $(V,E)$ is defined by
$\partial^{\rm ext} V = \cup_{x\in V}\lbr y\not\in V : y\sim x\ \rbr$.
}

\noindent
The \emph{edge-boundary} $\partial E$ of a graph $(V,E)$ is the set of edges
between two adjacent points of $\partial V$.

It will occasionally  be convenient to think about $\partial E_m$ as a closed
contour in $\bbR^2$ or, more generally, to think about subsets of $E$ (clusters,
paths, etc) in terms of their embedding into $\bbR^2$; we shall do it without
further comments in the sequel.

All constants in the sequel depend on $\beta$ and $q$ only. We shall use the
notation $f=O(g)$ if there exists $C=C(\beta,q)>0$ such that $|f|\le C|g|$. We
shall write $f=\Theta(g)$ if both $f=O(g)$ and $g=O(f)$.


\section{From Potts model to random-cluster model}

In this section, we relate Potts and random-cluster models. We will assume
throughout this article that the reader is familiar with the basic properties
of the Fortuin-Kasteleyn (FK) representation. A very concise and clear
exposition including  derivation 
of comparison inequalities could be found in \cite{ACCN88}. 
Mixing properties of random cluster measures were studied in \cite{Ale98, Ale04}. 
There is an extensive review \cite{GeoHagMae01} and
a book \cite{Gri06} on the subject. More recent results 
\cite{CamIofVel08, BefDum10} play an important  role in our approach.

\medskip 
\noindent
Let $G=(V(G),E(G))$ be a finite graph. An element $\omega\in\{0,1\}^{E(G)}$ is
called a configuration. An edge $e$ is said to be open in $\omega$ if
$\omega(e)=1$ and closed if $\omega(e)=0$. 
We shall work with two types of boundary conditions: $\sff$-free and 
$\sfw$-wired. 
Recall that the random-cluster
measure with edge-weight $p$ and cluster-weight $q$ on $G$ with 
$*$-boundary
condition ($*= \sff, \sfw$) is given by
\[
\mu_{G,p,q}^*(\omega)
=
\mu_{G}^{*}(\omega)=\frac{p^{\#\,\text{open edges}}(1-p)^{\#\,\text{closed
edges}}q^{\#_*\,\text{clusters}}}{Z_{G,p,q}^*},
\]
where $Z_{G,p,q}^*$ is a normalizing constant and a cluster is a maximal
connected component of the graph $(V(G),\{e\in E(G)\,:\, \omega(e)=1\})$. 
The number $\#_\sff\,\text{clusters}$ counts all the disjoint clusters, whereas 
the number $\#_\sfw\,\text{clusters}$ counts only those disjoint clusters which 
are not connected to the vertex boundary $\partial V$.

\subsection{Coupling with a supercritical random-cluster model on $(\Z^2)^*$}
We consider the $q$-state Potts model on the graph $(\mathbb Z^2)^*$ at inverse
temperature $\beta>\beta_c(q)$. As the parameters $\beta$ and $q$ will always
remain fixed, we drop them from the notation. Fix $\sigma\in\lbr 1, \dots,
q\rbr^{(\mathbb Z^2)^*}$. For each $n$, we define the Potts measure
$\bbP^\sigma_{\Lambda_n^*}$ on $\Lambda_n^*$ with boundary condition $\sigma$ on
the vertex boundary $\partial \Lambda_n^*$.

It is a classical result (see, \emph{e.g.}, \cite{ACCN88, GeoHagMae01}) that the Potts
model can be coupled with a random-cluster configuration in the following way.
From a configuration of spins $\eta\in\{1,\dots,q\}^{V(\Lambda_n^*)}$, construct
a percolation configuration $\omega^*\in\{0,1\}^{E_n^*}$ by setting each edge in
$E_n^*$ to be
\begin{itemize}
\item closed if the two end-points have different spins,
\item closed with probability ${\rm e}^{-\beta}$ and open otherwise if the two
end-points have the same spins.
\end{itemize}
The measure thus obtained is a random-cluster measure on $(\Z^2)^*$ with
edge-weight $p^*=1-{\rm e}^{-\beta}$, cluster-weight $q$ and wired boundary
condition on $\partial\Lambda_n^*$, conditioned on the following event, called
${\rm Cond}_n[\sigma]$: writing $S_i= \lbr x\in\partial \Lambda_n^*\, :\, \sigma
(x) =i\rbr$, the sets $S_i$ and $S_j$ are not connected by open edges in $
E_n^*$, for every $i\ne j$ in $\{1,\ldots,q\}$. We denote this measure by $
\mu_{\Lambda_n^*}^{\sfw}(\cdot \Cond)$. When there is no conditioning, the
random-cluster measure with wired (resp. free) boundary condition is denoted by
$\mu_{\Lambda_n^*}^{\sfw}$ (resp. $\mu_{\Lambda_n^*}^{\sff}$).

Reciprocally, the Potts measure can be obtained from $\mu_{\Lambda_n^*}^{\sfw}(\cdot\Cond)$ by assigning to every
cluster a spin in $\{1,\ldots,q\}$ according to the following rule:
\begin{itemize}
\item For every $i\in\{1,\ldots,q\}$, sites connected to $S_i$ receive the spin $i$,
\item The sites of a cluster which is not connected to $S_i$
  receive the same spin in $\{1,\ldots,q\}$ chosen uniformly at random, independently of the
spins of the other clusters.
\end{itemize}
Thanks to the connection between Potts measures and random-cluster measures, tools provided by the theory of
random-cluster models can be used in this context. Note that the parameters of the corresponding random-cluster measure
are supercritical ($p^*>p_c(q)$).

\subsection{Coupling with the subcritical Random-Cluster model on $\Z^2$}
Rather than working with the supercritical random-cluster measure on $(\Z^2)^*$, we will be working with its subcritical
dual measure on $\Z^2$ (this is the reason for choosing to define the Potts model on $(\Z^2)^*$).
There is a natural one-to-one mapping between $\lbr 0,1\rbr^{ E_n^*}$ and $\lbr 0,1\rbr^{ E_n}$.
Namely, set $\omega(e) = 1-\omega (e^*)$.
In this way, both direct and dual FK configurations are defined on the same probability space.
In the sequel, the same notation will be used for percolation events in direct and dual
configurations. For instance, $\omega\in {\rm Cond}_n [\sigma ]$ means that $\omega^*\in {\rm Cond}_n [\sigma ]$.
The corresponding direct FK measure is $\mu^{\sff}_{\Lambda_n} (\cdot\Cond )$.

It is well-known~\cite{ChaChaSch87} that this defines an FK measure with parameters $q$ and $p$ satisfying
$pp^*/[(1-p)(1-p^*)]=q$.

Since we are working with the low temperature Potts model, the random-cluster model on $(\Z^2)^*$ corresponds to
$p^*>p_c(q)$ so that the random-cluster model on $\Z^2$ is subcritical ($p<p_c(q))$.
For this measure, ${\rm Cond}_n
[\sigma ]$ is an increasing event which requires the existence of direct open paths disconnecting different dual
$S_i$-s.
This reduces the problem to the study of the stochastic geometry of subcritical
clusters.
In particular, this enables us to use known results on the subcritical model.

Let us recall the few properties we will be using in the next sections. First, there is a unique infinite-volume
measure, denoted $\mu_{\mathbb Z^2}$. Second, there is exponential decay of connectivities in the random-cluster model
with parameter $p<p_c(q)$. These two properties imply the following corollary.
\begin{proposition}
\label{reference subcritical}
There exists $c>0$ such that, for $n$ large enough and $2k\leq n
\leq m$,
\begin{gather*}\mu_{A_{k,n}}^\sfw(\text{there exists a crossing of $A_{k,n}$})\leq {\rm e}^{-cn},\\
\mu_{\Lambda_n}^\sfw(\text{there exists a cluster of cardinality }m\text{ in }\Lambda_{n/2})\leq {\rm e}^{-cm},
\end{gather*}
where a \emph{crossing} is a cluster of $A_{m,n}$ connecting the inner box to the outer box.
\end{proposition}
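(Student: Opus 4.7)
The plan is to derive both estimates from straightforward union-bound arguments, supplemented by two uniform exponential-decay estimates for the subcritical random-cluster measure on $\Z^2$: for $p<p_c(q)$ there exist constants $C,c_0>0$ such that, for every finite $\Lambda$, every $x,y\in\Lambda$, and every wired boundary condition,
\[
\mu_{\Lambda}^{\sfw}\bigl(|\calC(x)|\ge m\bigr)\;\le\; C\mathrm{e}^{-c_0 m},
\qquad
\mu_{\Lambda}^{\sfw}\bigl(x\leftrightarrow y\bigr)\;\le\; C\mathrm{e}^{-c_0\,d(x,y)},
\]
where $\calC(x)$ denotes the FK-cluster of $x$ and $d$ is graph-distance in $\Z^2$. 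These are standard consequences of the sharpness of the phase transition established in \cite{BefDum10}, together with the mixing estimates of subcritical FK measures \cite{Ale98,Ale04} recalled at the beginning of Section~2; I take them as black boxes.

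For the cluster-cardinality bound, the existence of an open cluster of cardinality at least $m$ intersecting $\Lambda_{n/2}$ implies that some $x\in\Lambda_{n/2}$ belongs to a cluster of cardinality at least $m$. A union bound gives
\[
\mu_{\Lambda_n}^{\sfw}\bigl(\exists\text{ cluster of cardinality }m\text{ in }\Lambda_{n/2}\bigr)
\;\le\;
\sum_{x\in\Lambda_{n/2}}\mu_{\Lambda_n}^{\sfw}\bigl(|\calC(x)|\ge m\bigr)
\;\le\;
|\Lambda_{n/2}|\,C\mathrm{e}^{-c_0 m}.
\]
Since the hypothesis forces $m\ge n$, the polynomial prefactor $|\Lambda_{n/2}|=O(n^2)$ is absorbed into the exponential via $n^2\le\mathrm{e}^{c_0 m/2}$ for $n$ large, yielding the claim with $c:=c_0/2$.

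For the crossing bound, an open cluster in $A_{k,n}$ connecting $\partial\Lambda_k$ to $\partial\Lambda_n$ must contain a vertex $x\in\partial\Lambda_k$ joined by an open path in $A_{k,n}$ to $\partial\Lambda_n$, a path of length at least $n-k\ge n/2$ since $2k\le n$. Summing the two-point function over starting points,
\[
\mu_{A_{k,n}}^{\sfw}\bigl(\exists\text{ crossing of }A_{k,n}\bigr)
\;\le\;
\sum_{x\in\partial\Lambda_k}\mu_{A_{k,n}}^{\sfw}\bigl(x\leftrightarrow\partial\Lambda_n\bigr)
\;\le\;
|\partial\Lambda_k|\,C\mathrm{e}^{-c_0 n/2},
\]
and the factor $|\partial\Lambda_k|=O(n)$ is absorbed for $n$ large, yielding $\mathrm{e}^{-cn}$ for any $0<c<c_0/2$.

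The only delicate point in this plan is the uniformity of the exponential decay with respect to the wired boundary condition: since wired boundary conditions stochastically dominate $\mu_{\Z^2}$ on increasing events, neither bound can be obtained by direct stochastic comparison with the infinite-volume measure. The resolution is that, for subcritical FK on $\Z^2$, sharpness provides an intrinsic finite correlation length, and combined with the mixing estimates cited above it yields exponential decay of the cluster size and of two-point connectivities uniformly in the finite volume and in the wired boundary condition --- which is precisely the input the two union bounds above require.
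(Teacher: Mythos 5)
Your overall structure is the same as the paper's: reduce both bounds, by elementary union-bound arguments, to an exponential-decay input for the subcritical random-cluster model under \emph{wired} boundary conditions, and defer the justification of that input to the literature (the paper cites~\cite{BefDum10} for exponential decay of connectivities together with the uniqueness of the infinite-volume measure, and refers to the appendix of~\cite{CamIofVel08} for the details). So this is not a genuinely different route; you merely make the union-bound step explicit, which the paper leaves implicit.

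There is, however, a gap in the way you state and justify your black-boxed input. You assert
\[
\mu_{\Lambda}^{\sfw}\bigl(|\calC(x)|\ge m\bigr)\le C{\rm e}^{-c_0 m},
\qquad
\mu_{\Lambda}^{\sfw}\bigl(x\leftrightarrow y\bigr)\le C{\rm e}^{-c_0\,d(x,y)},
\]
with constants uniform in \emph{all} finite $\Lambda$ and \emph{all} $x,y\in\Lambda$, and justify this by ``sharpness $+$ mixing''. But ratio weak mixing as in~\cite{Ale98,Ale04} controls the influence of the boundary condition only on events lying well inside $\Lambda$; it does not directly yield uniform two-point bounds when $x$ is itself a boundary vertex of the finite domain. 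In your crossing argument you apply the two-point estimate precisely to $x\in\partial\Lambda_k$, which lies on the boundary of the annulus $A_{k,n}$, so the mixing estimate does not apply to the event $\{x\leftrightarrow\partial\Lambda_n\}$ as stated. One has to dominate the crossing by a crossing of a concentric sub-annulus (say $A_{2n/3,3n/4}$) lying at macroscopic distance from $\partial A_{k,n}$, compare there to the infinite-volume measure via weak mixing, and then invoke exponential decay; the factor $|\partial\Lambda_k|$ disappears but a new $O(n)$ factor from the sub-annulus reappears, which is again harmless. This is exactly the content of the argument in~\cite[Appendix]{CamIofVel08} that the paper references. A related point: you attribute the wired-boundary control to ``sharpness'', but the paper stresses that it hinges on the \emph{uniqueness of the infinite-volume measure} at $p<p_c(q)$; the remark in the paper that the proposition fails at $p_c$ for large $q$ (even though exponential decay holds for free boundary conditions there) makes plain that exponential decay alone does not suffice and uniqueness (which identifies the wired and free thermodynamic limits) is essential. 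So the conclusion you want is correct and the plan is sound, but the two displayed estimates should be stated for events supported at macroscopic distance from $\partial\Lambda$ (which is how they enter your cardinality bound and how they should enter, after the sub-annulus reduction, your crossing bound), and the justification should invoke uniqueness explicitly.
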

A cluster surrounding the inner box of $A_{m,n}$ inside  the outer box 
of $A_{m,n}$ is said to be a
\emph{circuit}. Note that the  existence of  a dual circuit is a complementary 
event to the existence of a crossing between the inner and outer boxes.

Proposition~\ref{reference subcritical} follows from the exponential decay of
connectivities proved for any $p<p_c(q)$ in \cite{BefDum10} \emph{together with}
the uniqueness of the infinite-volume measure (this is required to tackle wired
boundary conditions, see \cite[Appendix]{CamIofVel08} for details). The result
would not be true at criticality when $q$ is very large, despite the fact that
there is exponential decay for free boundary conditions.

\paragraph{Surface tension}
Surface tension in the supercritical dual model is the inverse correlation
length in the primal sub-critical FK percolation. Let $p<p_c(q)$.
The surface tension in direction $x$ is defined by
\[
\tau(x)
=
\tau_p(x)
=
-\lim_{k\to\infty}\frac1k\log\mu_{\Z^2}(0\leftrightarrow [kx]),
\]
where $y\leftrightarrow z$ means that $y$ and $z$ belong to the same connected
component. We will also refer to it as the $\tau$-distance. By
Proposition~\ref{reference subcritical}, $\tau$ is equivalent to the usual
Euclidean distance on $\bbR^d$. Furthermore, by~\cite{CamIofVel08} it is
strictly convex, and the following sharp triangle inequality of
\cite{Iof94, PfiVel99} holds: There exists $\rho =\rho (p) >0$ such that
\begin{equation}\label{eq:sharp-ti}
\tau(x)+\tau(y) -\tau(x+y)\geq \rho  (|x|+|y|-|x+y|) .
\end{equation}
Define ${\rm d}_\tau(A,B)=\sup_{a\in A}\inf_{b\in B} \tau(a-b)$ to be the
$\tau$-Hausdorff distance between two sets.

\subsection{Reformulation of the problem in terms of the subcritical
random-cluster model}

\begin{theorem}\label{main RC}
Fix $p<p_c(q)$ and let $\varepsilon\in(0,1)$. Then, uniformly in all boundary
conditions $\sigma$,
\begin{equation}
\mu_{\Lambda_n}^\sff \bigl({\mathsf C}\cap\Lambda_{n^{\varepsilon}}\neq\emptyset
\Cond\bigr)
=
O(n^{-\frac{1}{2} + 14\varepsilon})
\end{equation}
where $\mathsf C$ is the set of sites connected to the boundary 
$\partial\Lambda_n$.
\end{theorem}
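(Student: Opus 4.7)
The conditioning $\cond$ forces the existence, in the subcritical primal configuration on $\Lambda_n$, of open paths that separate the distinct boundary arcs $S_i,\, i\in\{1,\dots,q\}$, of $\partial \Lambda_n^\star$ from each other. Let $T = T(\sigma)$ be the cyclic sequence of \emph{transition points} on $\partial\Lambda_n^\star$ where $\sigma$ changes value; to each such transition there corresponds a pair of neighbouring primal vertices on $\partial\Lambda_n$ that must be connected through the bulk to some other transition's partners by an open path. Thus $\cond$ forces the existence of a \emph{skeleton} $\calK \subset \Lambda_n$, a union of open primal paths whose combinatorial type is that of a forest (or, more naturally, a collection of trees) interpolating between the transition points of $\sigma$. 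The set $\mathsf{C}$ consists of $\calK$ together with all primal clusters attached to $\calK$ or to $\partial\Lambda_n$. The goal is to show that no component of $\mathsf C$ penetrates as deep as $\Lambda_{n^\varepsilon}$.

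The first step is to kill off the non-skeleton contribution. By Proposition~\ref{reference subcritical}, any primal cluster of cardinality $\ge n^{\varepsilon}$ occurring in $\Lambda_{n/2}$ has probability $\le e^{-c n^\varepsilon}$ under the \emph{unconditioned} measure $\mu_{\Lambda_n}^\sff$. A careful finite-energy argument, combined with the fact that the cost of the conditioning is only polynomial in $n$ (bounded below using known lower bounds on connectivities at distance $n$ up to the $\tau$-exponential factor), transfers this to an estimate under $\mu_{\Lambda_n}^\sff(\cdot \mid \cond)$: clusters not directly attached to the forced skeleton $\calK$ cannot reach $\Lambda_{n^\varepsilon}$ except with stretched-exponentially small probability. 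It therefore suffices to control $\calK \cap \Lambda_{n^\varepsilon}$.

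The second step is to extract a macroscopic tree structure for $\calK$. Using the sharp triangle inequality \eqref{eq:sharp-ti} together with the Ornstein--Zernike theory of subcritical FK clusters from \cite{CamIofVel08}, one shows that the typical realization of $\calK$ is $\tau$-close (in $d_\tau$-Hausdorff distance $O(\sqrt n\, \mathrm{polylog}\, n)$) to the $\tau$-Steiner tree $\calT^\star$ spanning the transition points. In particular, $\calT^\star$ and each of its branches are bounded away from the origin by a distance of order $n$, since the transition points lie on $\partial\Lambda_n^\star$. The only way $\calK$ can enter $\Lambda_{n^\varepsilon}$ is for one of the macroscopic branches to fluctuate anomalously far from the corresponding edge of $\calT^\star$.

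Finally, I would invoke the Brownian scaling of each individual OZ-branch: conditionally on its endpoints, an edge of $\calK$ of macroscopic length $\Theta(n)$ can be written as an Ornstein--Zernike effective random walk whose transverse displacement from the $\tau$-geodesic is $O(\sqrt{n})$ with Gaussian tails, and whose transverse density at any fixed longitudinal position is $O(1/\sqrt n)$. Integrating this density over a window of width $n^\varepsilon$ and taking a union bound over the $O(1)$ macroscopic branches (the number of transition points that produce macroscopic branches is bounded thanks to the sharp triangle inequality, which penalizes the creation of many Steiner points) yields the advertised $O(n^{-1/2+O(\varepsilon)})$ bound, the extra powers $n^{14\varepsilon}$ absorbing the polylog errors, the finite-energy costs of decoupling, and the fluctuations of the Steiner points themselves. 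The main obstacle is precisely this last point: unlike in the Ising case of \cite{CoqVel10}, where the skeleton was a single line with two fixed endpoints, here the skeleton is a genuine tree whose branching (Steiner) points are random and can lie anywhere in $\Lambda_n$, and whose branches interact through the conditioning. Controlling the joint fluctuations of the branches around a random Steiner skeleton, while preserving the $n^{-1/2}$ diffusive gain for the branch nearest the origin, is the technical heart of the argument.
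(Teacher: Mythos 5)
Your outline correctly identifies the overall architecture --- extract a Steiner-type skeleton for the forced connections, show it is close to a deterministic Steiner forest, then exploit Ornstein--Zernike diffusive estimates on individual branches --- and this is indeed the approach of Sections~3--5 of the paper. However, two crucial steps are missing. First, the reduction from arbitrary $\sigma$, which may have $\Theta(n)$ transition points, to a skeleton with a uniformly bounded number $M$ of macroscopic branches is not achieved by a working argument. You gesture at the sharp triangle inequality ``penalizing the creation of many Steiner points,'' but a priori the Steiner forest spanning $\Theta(n)$ boundary points has $\Theta(n)$ leaves, and you still have to rule out that more than $O(1)$ of them simultaneously reach a fixed fraction of the box. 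The paper's Proposition~\ref{flowers-ndelta} does this by a direct percolation count: $\FK(\calE_r)\leq {\rm e}^{-crn}$ for $r$ disjoint crossings of the annulus $A_{n/3,n/2}$ (Lemma~\ref{lem:1}), against the crude lower bound $\FK(\cond)\geq\tilde p^{8n}$ obtained by forcing every edge of $\partial E_n$ open; taking $R$ and $M$ large enough lets the second beat the first, which yields the flower domain at scale $\Theta(n)$ with $\abs{\bbG}\leq M$ and the Bell-number accounting of Proposition~\ref{prop:reduction2}. (Your ``cost of the conditioning is only polynomial in $n$ up to the $\tau$-exponential factor'' is valid only \emph{after} this reduction --- cf.~\eqref{lb-mg}; before it, when $\sigma$ has $\Theta(n)$ transitions, the $\tau$-exponential factor is not $e^{-O(n)}$ and the bound gives nothing.)

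Second, the fluctuation estimate for the random triple point --- which you yourself flag as ``the technical heart'' --- is left entirely open, and a generic appeal to ``Brownian scaling'' does not furnish it. In the tripod scenario one must show that the random triple point of $\sfC_\bbG$ has Gaussian concentration of width $\sqrt{k}$ around the deterministic minimizer $x$ of $\phi(y)=\sum_i\tau(u_i-y)$, so that falling within $n^{\varepsilon}$ of the origin already costs $O(n^{-1/2+O(\varepsilon)})$; on top of that one needs the transverse diffusive bound~\eqref{eq:OZe} for the one branch that actually reaches $\Lambda_{n^{\varepsilon}}$, and these two contributions must be combined without losing powers. This is the content of Lemma~\ref{lem:stweights}, whose proof rests on the cone-confinement properties T1 and T2 established via Lemma~\ref{split-bound}, on the ratio mixing inequality~\eqref{eq:ratio-mixing} to decouple the three legs from $\sfC_0$, on the implicit function theorem to track $x_{\rm min}(W)$, and on the sharp Ornstein--Zernike asymptotics of~\cite{CamIofVel08} applied leg by leg. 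Without this estimate, the proposal does not yield the claimed $n^{-1/2+14\varepsilon}$.
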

The proof of this theorem will be the core of the paper. Before delving into the
proof, let us show how it implies Theorem~\ref{main theorem}.
\begin{lemma}\label{lemthm}
Let $\beta>\beta_c(q)$. Then,
\begin{equation}\label{mixed}
\mathbb P^\sff_{(\mathbb Z^2)^*}
=
\frac1q\sum_{i=1}^q\mathbb P^{(i)}_{(\mathbb Z^2)^*}.
\end{equation}
\end{lemma}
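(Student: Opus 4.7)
\medskip\noindent
\textbf{Proof plan.}
The plan is to read the identity off directly from the Edwards--Sokal coupling, whose only substantive input is uniqueness of the infinite-volume random-cluster measure at $p^*>p_c(q)$.

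First I would recall the coupling in finite volume. The free-boundary Potts measure $\mathbb P^{\sff}_{\Lambda_n^*}$ arises from $\mu^{\sff}_{\Lambda_n^*}$ by assigning to each cluster an independent uniform spin in $\{1,\ldots,q\}$, whereas $\mathbb P^{(i)}_{\Lambda_n^*}$ arises from $\mu^{\sfw}_{\Lambda_n^*}$ by coloring the cluster touching $\partial\Lambda_n^*$ with the deterministic spin $i$ and every other cluster with an independent uniform spin; this is exactly the prescription recalled in Section~2.1.

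Next I would take the thermodynamic limit. At $\beta>\beta_c(q)$, equivalently $p^*>p_c(q)$, the two-dimensional FK model on $(\Z^2)^*$ admits a unique infinite-volume measure: in particular, $\mu^{\sff}_{\Lambda_n^*}$ and $\mu^{\sfw}_{\Lambda_n^*}$ both converge weakly to a common measure $\mu_{(\Z^2)^*}$. This follows from the exponential decay of dual connectivities in the subcritical regime established in~\cite{BefDum10} together with the standard duality/free-energy argument (see, e.g., \cite[Appendix]{CamIofVel08}). Under $\mu_{(\Z^2)^*}$ there is almost surely a unique infinite cluster, which I denote by $\mathsf C_\infty$.

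Passing the coupling to the limit, I would describe $\mathbb P^{(i)}_{(\Z^2)^*}$ as the joint law of an FK configuration distributed according to $\mu_{(\Z^2)^*}$ in which $\mathsf C_\infty$ carries the deterministic color $i$ while every finite cluster $\mathsf C_k$ carries an independent uniform color $\xi_k\in\{1,\dots,q\}$. The measure $\mathbb P^{\sff}_{(\Z^2)^*}$ has the same FK marginal, but now also assigns an independent uniform color $\xi_\infty$ to $\mathsf C_\infty$. Hence $\mathbb P^{(i)}_{(\Z^2)^*}=\mathbb P^{\sff}_{(\Z^2)^*}\bigl(\,\cdot\,\bigm|\xi_\infty=i\bigr)$, and averaging over $i$ with weights $1/q$ recovers $\mathbb P^{\sff}_{(\Z^2)^*}$, which is precisely~\eqref{mixed}. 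The main---indeed only---technical input is the uniqueness $\mu^{\sff}_{(\Z^2)^*}=\mu^{\sfw}_{(\Z^2)^*}$ at $p^*>p_c(q)$; once that is granted, the identity is a tautology of the Edwards--Sokal coupling and the almost-sure uniqueness of the infinite FK cluster.
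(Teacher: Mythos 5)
Your proposal is correct and follows essentially the same route as the paper: both hinge on the uniqueness of the infinite-volume FK measure at $p^*>p_c(q)$ (hence the coincidence of the free and wired limits), the almost-sure uniqueness of the infinite cluster, and reading off the identity from the two coloring rules in the Edwards--Sokal coupling. Your reformulation $\mathbb P^{(i)}_{(\Z^2)^*}=\mathbb P^{\sff}_{(\Z^2)^*}(\,\cdot\mid\xi_\infty=i)$ is a clean way to phrase the averaging step, but the underlying argument is identical to the one in the text.
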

\begin{proof}
Fix $\beta>\beta_c$. Note that $\bbP^{(i)}_{(\bbZ^2)^*}$ can be defined via the
coupling with the random-cluster measure as follows. Let $\mu_{(\bbZ^2)^*}$ be
the unique infinite-volume random-cluster measure on $(\bbZ^2)^*$. Since
$p^*>p_c(q)$, this measure possesses a unique infinite cluster. The Potts
measure $\bbP^{(i)}_{(\bbZ^2)^*}$ is constructed by assigning spin $i$ to the
infinite cluster, and a spin chosen uniformly at random for each finite cluster,
independently of the spin of the other clusters. The Potts measure
$\bbP^\sff_{(\bbZ^2)^*}$ can also be constructed from $\mu_{(\bbZ^2)^*}$ by
assigning to each cluster (including the infinite one) a spin chosen uniformly
at random, independently of the spin of the other clusters. We
deduce~\eqref{mixed} immediately.

\smallskip
Note that in general, $\bbP^{(i)}_{(\bbZ^2)^*}$ is constructed from the
infinite-volume random-cluster measure $\mu_{(\Z^2)^*}^{\sfw}$ while
$\bbP^\sff_{(\bbZ^2)^*}$ is constructed from the infinite-volume random-cluster
measure $\mu_{(\Z^2)^*}^{\sff}$. Therefore, if these two measures are different,
\eqref{mixed} will not be valid. This is the case when $p=p_c(q)$ and $q$ is
large enough.
\end{proof}
\begin{lemma}
There exists $c>0$ such that, for any $n>0$ and any subdomain $\Omega^*$ of
$(\Z^2)^*$ containing $\Lambda_{2n}^*$,
\begin{equation}
\label{eq:lemthm}
\mathbb P^\sff_{\Omega^*}[g]
=
\mathbb P^\sff_{(\Z^2)^*}[g]+O(\|g\|_\infty {\rm e}^{-cn}),
\end{equation}
for any $g$ depending only on spins in $\Lambda_n^*$. The same holds for pure
boundary conditions $i\in\{1,\ldots,q\}$.
\end{lemma}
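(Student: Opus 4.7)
The plan is to transfer the claim to the random-cluster representation and exploit exponential decay in the subcritical primal model. By the Edwards-Sokal coupling, both $\bbP^\sff_{\Omega^*}$ and $\bbP^\sff_{(\bbZ^2)^*}$ admit the following realization: sample the supercritical FK measure $\mu^\sff_{\Omega^*}$ (respectively $\mu^\sff_{(\bbZ^2)^*}$) at $p^{*}=1-\mathrm{e}^{-\beta}$ with free boundary condition, and then color each connected component independently and uniformly in $\{1,\ldots,q\}$. Since $g$ depends only on spins in $\Lambda_n^*$, the Potts expectation depends on the FK configuration only through the partition $\pi(\omega)$ of $\Lambda_n^*$ induced by the FK clusters; writing $F(\pi)$ for the average of $g$ over spin assignments constant on each block of $\pi$, we have $\bbP^\sff_{\Omega^*}[g] = \mu^\sff_{\Omega^*}[F(\pi)]$, and similarly for $(\bbZ^2)^*$. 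The task therefore reduces to comparing the laws of $\pi$ under the two FK measures.

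The key step is to show that $\pi$ is, with high probability, determined by the restriction of the FK configuration to $\Lambda_{3n/2}^*$. Passing via the duality $\omega(e)=1-\omega(e^{*})$ to the subcritical primal FK model on $\bbZ^2$, Proposition~\ref{reference subcritical} yields an event $G_n$ of probability $1-O(\mathrm{e}^{-cn})$ on which no primal cluster in a suitably large neighborhood of $\Lambda_{3n/2}$ contains more than $n/2$ vertices. On $G_n$, any finite dual cluster of diameter at least $n/2$ would be enclosed by a primal open cycle of cardinality at least $n/2$, a contradiction; hence every finite dual cluster intersecting $\Lambda_n^*$ is entirely contained in $\Lambda_{3n/2}^*$, while the remaining dual sites of $\Lambda_n^*$ all lie in the unique infinite dual cluster and can be identified as those sites connected to $\partial \Lambda_{3n/2}^*$ via $\omega^{*}$-open paths inside $\Lambda_{3n/2}^*$. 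Consequently, $\pi$ coincides on $G_n$ with a local partition $\tilde\pi$ that is measurable with respect to the edges in $\Lambda_{3n/2}^*$.

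Finally, the exponential mixing property of the subcritical random-cluster model (Alexander~\cite{Ale98, Ale04}, applicable thanks to the uniqueness of the infinite-volume measure granted by Proposition~\ref{reference subcritical}), transferred via the above duality to the supercritical dual side, yields
\[
\bigl|\mu^\sff_{\Omega^*}[F(\tilde\pi)] - \mu^\sff_{(\bbZ^2)^*}[F(\tilde\pi)]\bigr| = O(\|g\|_\infty\, \mathrm{e}^{-cn}).
\]
Combined with the coincidence $\pi=\tilde\pi$ off an event of probability $O(\mathrm{e}^{-cn})$, this proves the free boundary case. The pure boundary condition cases $i\in\{1,\ldots,q\}$ are handled identically, using the supercritical FK measure with wired boundary condition and assigning spin $i$ to the unique wired/infinite cluster; the locality argument transfers verbatim since the wired cluster plays the role of the infinite component. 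The main technical subtlety lies in the second step: although $\pi$ is defined via connectivity, an a priori non-local notion, subcriticality of the primal model forces the connected components meeting $\Lambda_n^*$ to be effectively local.
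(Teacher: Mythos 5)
Your proposal is correct and follows essentially the same route as the paper: pass to the random-cluster representation, use a high-probability event on which the relevant FK connectivities of $\Lambda_n^*$ are determined by edges in $\Lambda_{3n/2}^*$ (your event $G_n$ implies, and the paper directly uses, the existence of a dual circuit in $A_{n,3n/2}$), and conclude via the ratio strong mixing property of the subcritical model together with the observation that $\mu^\sff_{\Omega^*}$ is a conditioning of $\mu^\sff_{(\Z^2)^*}$. The only minor imprecision is the phrase ``unique infinite dual cluster'' when working under the finite-volume measure $\mu^\sff_{\Omega^*}$ --- what you mean, and what makes the argument work, is that on $G_n$ all dual sites of $\Lambda_n^*$ connected to $\partial\Lambda_{3n/2}^*$ inside $\Lambda_{3n/2}^*$ lie in a single cluster because they all meet the dual circuit in $A_{n,3n/2}$.
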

\begin{proof}
We treat the case of the free boundary condition. The other cases follow from
the same proof. Since $p<p_c(q)$, the random-cluster model on $\Z^2$ has
exponential decay of connectivities. Therefore, \cite[Theorem~1.7(ii)]{Ale04}
implies the so-called ratio strong mixing property for the dual random-cluster
model:
If a percolation  event $A$ depends on edges from $E_A$ and if 
$B$ depends on edges from $E_B$, then, 
\begin{equation}
 \label{eq:ratio-mixing}
 \left| \frac{\mu^{\sff}_{(\bbZ^2)^*}(A\cap
B)}{\mu^{\sff}_{(\bbZ^2)^*}(A)\mu^{\sff}_{(\bbZ^2)^*}(B)} -1\right| \leq 
 \sum_{e_A \in E_A , e_B\in E_B} {\rm e}^{-c\, \dd (e_A , e_B )}  ,
\end{equation}
where $\dd (e_A , e_B )$ is a distance between edges $e_A$ and $e_B$ (for instance 
the distance between their mid-points). 

Together with the observation that
$\mu^{\sff}_{\Omega^*}=\mu_{(\Z^2)^*}^{\sff}(\cdot|\omega(e)=0,\forall e\notin
E(\Omega^*))$, this leads to
\begin{equation}\label{abc}
\bigl\lvert\mu^{\sff}_{\Omega^*}[f]-\mu_{(\Z^2)^*}^{\sff}[f]\bigr\rvert
=
O\bigl( {\rm e}^{-cn}\mu_{(\Z^2)^*}^{\sff}[f]\bigr)
\end{equation}
for any function $f$ depending only on edges in $E_{3n/2}^*$. More generally,
let $F$ be the event that there does not exist an open crossing in the annulus
$A_{n,3n/2}$ (this corresponds to the existence of a dual circuit surrounding
the origin). {The complement $F^c$ of this} 
event has exponentially small probability by
Proposition~\ref{reference subcritical}. Consider a function $f$ depending a
priori on every dual edges, but with the property that $f\mathbf{1}_{F}$ is
measurable with respect to edges in $E_{3n/2}^*$. We immediately find that
\[
\mu^{\sff}_{\Omega^*}[f]
=
\mu^{\sff}_{\Omega^*}[f\mathbf{1}_F]+O\big(||f||_{\infty}\mu^{\sff}_{\Omega^*}
(F^c)\big)
=
\mu^ {\sff}_{\Omega^*}[f\mathbf{1}_F]+O(||f||_{\infty}{\rm e}^{-cn})
\]
and similarly for $\mu^\sff_{(\Z^2)^*}[f]$, so that~\eqref{abc} is preserved for
this class of functions.
\medbreak
Now, consider $g$ depending only on spins in $\Lambda_n^*$. Via the coupling
with the random-cluster model, $\mathbb P^\sff_{\Omega^*}[g]$ and $\mathbb
P^\sff_{(\Z^2)^*}[g]$ can be seen as $\mu^{\sff}_{\Omega^*}[f]$ and
$\mu^{\sff}_{(\Z^2)^*}[f]$ for a certain function $f$, depending a priori on
every edge, but for which $f\mathbf{1}_F$ depends on edges in $E_{3n/2}^*$ only
(on the event $F$, the dual connections between vertices of $\Lambda_n^*$ are
determined by edges in $E_{3n/2}^*$). We conclude that
\[
\big|\mathbb P^\sff_{\Omega^*}[g]-\mathbb P^\sff_{(\Z^2)^*}[g]\big|
=
\big|\mu^{\sff}_{\Omega^*}[f]-\mu^{\sff}_{(\Z^2)^*}[f]
\big|
=
O\big(||f||_{\infty}{\rm e}^{-cn} \big).
\]
\end{proof}
\begin{figure}[t]
\begin{center}
\includegraphics[width=0.80\textwidth]{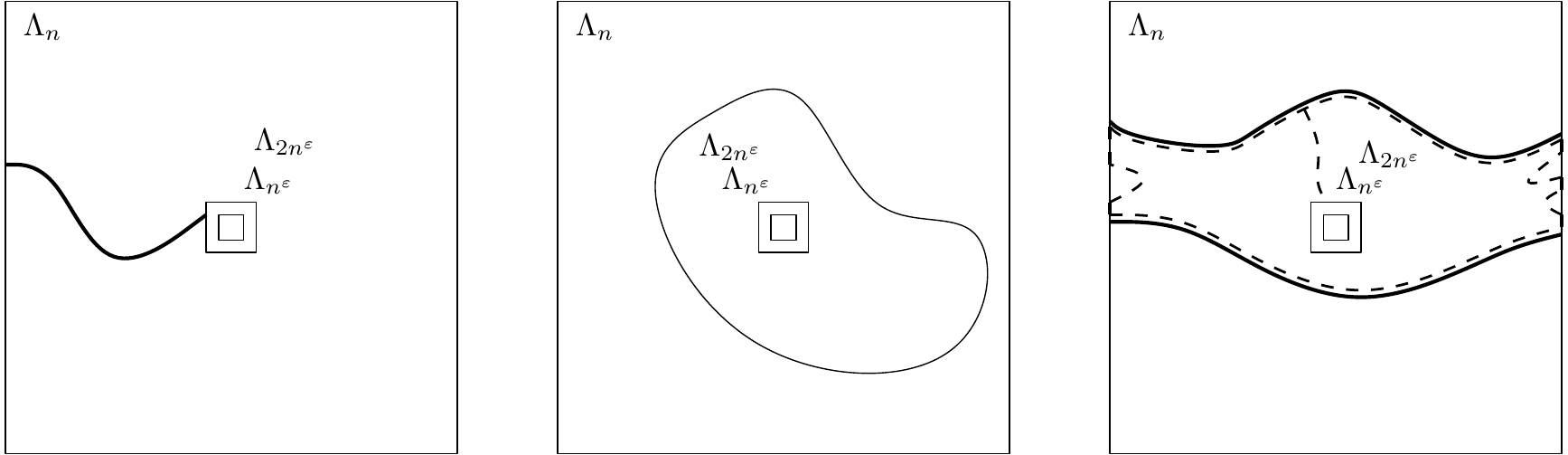}
\end{center}
\caption{On the left (resp. center, right), the event $ \mathcal E$ (resp.
$\calF^\sff$, $\calF^{(i)}$) is depicted.}
\label{fig:threecases}
\end{figure}
\begin{proof}[Proof of Theorem~\ref{main theorem}]
Fix $n>0$ and a boundary condition $\sigma$ on $\partial \Lambda_n$. Fix
$\varepsilon >0$ small.

We consider the coupling $(\eta,\omega)$ (the measure is denoted by $\mathbf P$)
with marginals $\mathbb P^\sigma_{\Lambda_n}$ and
$\mu^\sff_{\Lambda_n}(\cdot\Cond)$ described in the previous section. Let
$\mathcal E$ be the event that $\omega$ contains an open crossing in
$A_{2n^\varepsilon,n}$. Let $\mathcal F^\sff$ be the event that $\omega$
contains an open circuit in $A_{2n^\varepsilon ,n}$. Let $\mathcal F^{(i)}$ be
the event that $\omega$ contains neither an open crossing nor an open circuit in
$A_{2 n^\varepsilon ,n}$, and that $(\Lambda_{2n^\varepsilon})^*$ is connected
in the dual configuration to $S_i$. Note that
\[
\mathbf P(\mathcal E)
=
\mu^\sff_{\Lambda_n}(\mathcal E\Cond)
= O(n^{-\frac{1}{2} +14\ep}),
\]
by applying Theorem~\ref{main RC}.
\begin{itemize}
\item (conditioning on $\mathcal F^\sff$).
Let $\Gamma^*$  be the connected component of $\partial\Lambda_n^*$ in
$\omega^*$. Denote the connected component of $\Lambda_{2n^\varepsilon}^*$ in
$\Lambda_n^*\setminus \Gamma^*$  by $\Omega^*$. We have
$\Lambda_{2n^\varepsilon}^*\subset\Omega^*$. Conditioning on $\Gamma^*$ we
infer, using~\eqref{eq:lemthm} and~\eqref{mixed} that
\begin{align*}
\mathbf P\bigl(g\bigm|\mathcal F^\sff\bigr)
&=
\mathbf P\bigl( \mathbb P^\sff_{\Omega^*}[g] \bigm| {\mathcal F}^\sff\bigr) 
=
\mathbb P^\sff_{(\Z^2)^*}[g] + O(\|g\|_\infty {\rm e}^{-cn^{\varepsilon}}) \\
&=
\frac1q \sum_{i=1}^q\mathbb P^{(i)}_{(\Z^2)^*}[g] + O(\|g\|_\infty {\rm
e}^{-cn^{\varepsilon}}).
\end{align*}
\item (conditioning on $\mathcal F^{(i)}$).
In this case, let us condition on the connected cluster $\Gamma$ of $\partial
\Lambda_n$. We view $\Gamma$ as the set of bonds. Define $\Omega^*$ as the
connected component of $\Lambda_{2n^\varepsilon}^*$ in $\lb
E_n\setminus\Gamma\rb^*$. By  construction, $\Lambda_{2n^\varepsilon}^*\subset
\Omega^*$ and $\Omega^*\cap S_i \neq \emptyset$. Consequently,
using~\eqref{eq:lemthm} once again, we obtain
\[
\mathbf P\bigl(g\bigm|\mathcal F^{(i)}\bigr)
=
\mathbf P\bigl( \mathbb P^{(i)}_{\Omega^*}[g] \bigm| {\mathcal F}^{(i)}\bigr)
=
\mathbb P^{(i)}_{(\Z^2)^*}[g] + O(\|g\|_\infty {\rm e}^{-cn^{\varepsilon}}).
\]
\end{itemize}
By summing all these terms,
\begin{align*}
\mathbb P^\sigma_{\Lambda_n}[g]
&=
\mathbf P[g]
=
\mathbf P[g|\mathcal E]\, \mathbf P[\mathcal E] + \mathbf P[g|\mathcal F^\sff]\, \mathbf
P[\mathcal F^\sff] + \sum_{i=1}^q \mathbf P[g|\mathcal F^{(i)}]\mathbf P[\mathcal F^{(i)}]\\
&=
\sum_{i=1}^q \bigl( \tfrac1q \mathbf P[\mathcal F^\sff] + \mathbf P[\mathcal
F^{(i)}] \bigr) \mathbb P^{(i)}_{(\Z^2)^*}[g] + O(\|g\|_\infty n^{-\frac12
+14\varepsilon}),
\end{align*}
which implies the claim readily.
\end{proof}

\section{Macroscopic flower domains}

In the box $\Lambda_n$, the conditioning on ${\rm Cond}_n[\sigma]$ can be very
messy. Indeed, as we mentioned before, it forces the existence of open paths
separating the sets $S_i$. For instance, the number of such paths forced by an
alternating boundary condition $1,2,\dots,q,1,2,\dots$ is necessarily of order
$n$.

We first show that, no matter what the boundary condition $\sigma$ is, with high
probability  only a bounded number of such interfaces is capable of reaching an
inner box $\Lambda_m$, where $m$ is a fraction of $n$.  Furthermore, we shall
argue that the number of sites in $\partial\Lambda_m$ which are connected to the
original $\partial\Lambda_n$ is uniformly bounded.  In terms of the original
Potts model, this corresponds to the existence, with high probability, of a
domain including the box $\Lambda_m$ for which the boundary condition contains a
uniformly bounded number of spin changes. This will be called a flower domain
below.

\subsection{Definition of flower domains}

Let $m<n$. For a configuration $\omega$, let $\sfC_{m,n}=\sfC_{m,n}(\omega)$ be
the set of sites connected to $\partial\Lambda_n$ in $\omega\cap(E_n\setminus
E_m)$.  Define the set of \emph{marked vertices} by
\[
\bbG_{m,n}
=
\bbG_{m,n}(\omega)
=
\sfC_{m,n}\cap\partial\Lambda_{m}.
\]
The set $\bbG_{m,n}\cup\lb\Lambda_n\setminus\sfC_{m,n}\rb $ may have several
connected components, exactly one of them containing  $\Lambda_m$. Let us call
the latter the \emph{flower domain} $\calD_{m,n}=\calD_{m,n}(\omega)$
\emph{rooted at $m$}. Note that  $\bbG_{m,n} = \partial
\calD_{m,n}\cap\partial\Lambda_m$, that is marked sites are unambiguously
determined by the corresponding flower domains.

Fix a configuration $\omega$. Let $\calC=\sfC_{m,n}(\omega)$ and let
$\calD=\calD_{m,n}(\omega)$ be the corresponding flower domain. Let also
$\bbG=\bbG_{m,n}(\omega)$. By construction, the restriction of the conditional
measure $\mu^{\sff}_{\Lambda_n}(\, \cdot\,  | \sfC_{m,n} =\calC)$ to $\lbr
0,1\rbr^{\calE_{\calD}}$, where $\calE_{\calD}$ is the set of edges of $\calD$,
is the FK measure with free boundary conditions on $\partial\calD\setminus\bbG$
and wiring between sites of $\bbG$ inherited from connections in $\calC$. We
denote this restricted conditional measure as $\FKflower$. We also set
$\sfC_{\bbG}$ for the connected component of $\bbG$ in the restriction of 
$\omega $ to $\calE_{\calD}$.

\begin{figure}
\begin{center}
\includegraphics[width=0.40\textwidth]{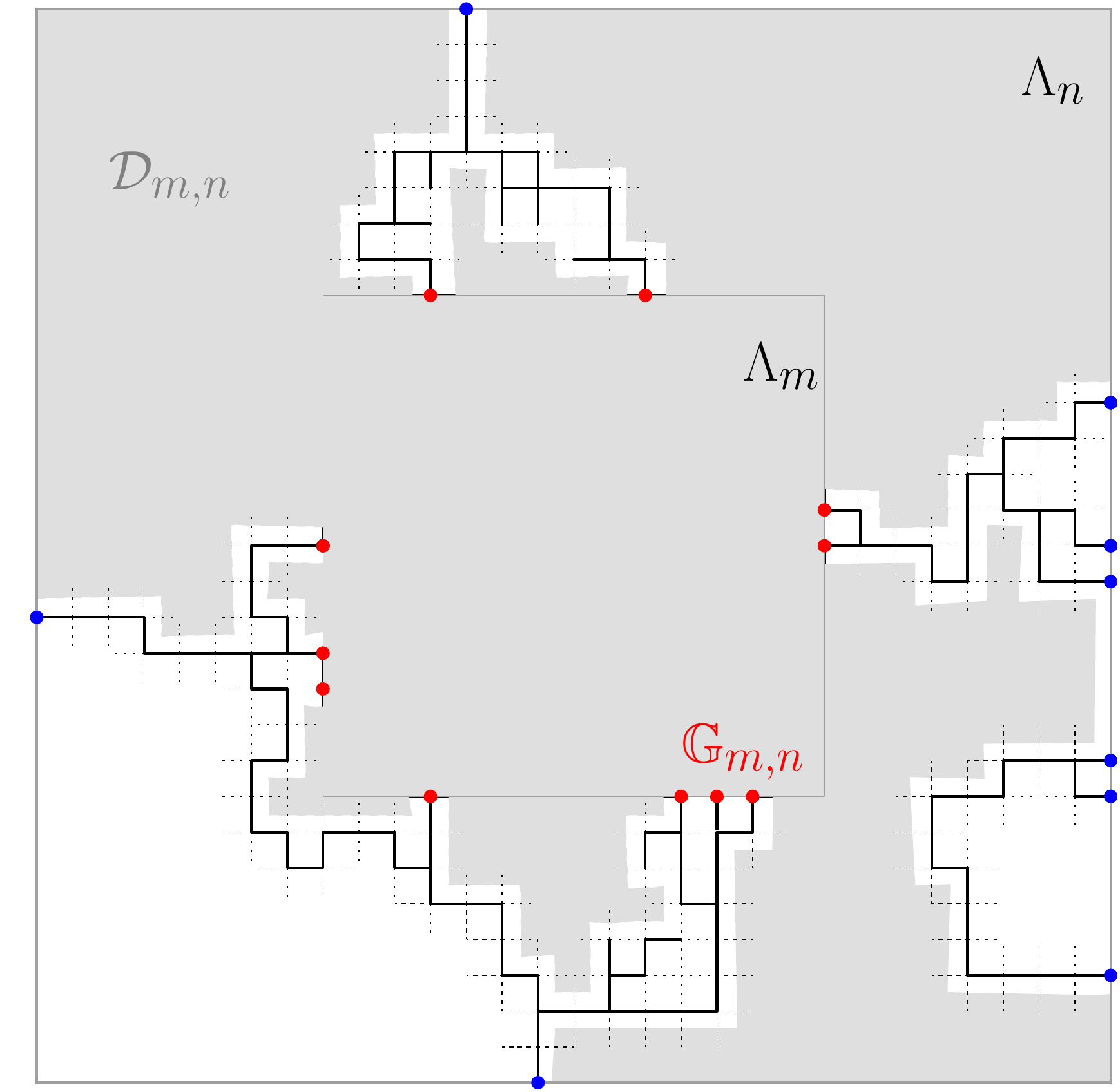}
\end{center}
\caption{Description of a flower domain $\mathcal D_{m,n}$ (light grey area).
The blue points are locations of spin changes (i.e. separation between sets
$S_i$), the red points constitute $\mathbb G_{m,n}$, the solid black lines in
the annulus $\Lambda_n\backslash\Lambda_m$ constitute $\sfC_{m,n}$.}
\label{fig:flowerdomain}
\end{figure}

\subsection{Cardinality of $\bbG_{m,n}$}

Flower domains have typically small sets $\bbG_{m,n}$, as the following
proposition shows.
\begin{proposition}
\label{flowers-ndelta}
There exists $M>0$ such that for any $\delta>0$
\begin{equation}\label{non-regPetals}
\mu_{\Lambda_n}^\sff\Bigl( \exists m\in\bigl[\tfrac{\delta n}{3},\delta
n\bigr]:|\bbG_{m,n}| \le M \Bigm| {\rm Cond}_n[\sigma]\Bigr) \ge 1-{\rm
e}^{-\delta n} ,
\end{equation}
uniformly in $\sigma$ and $n$ sufficiently large.
\end{proposition}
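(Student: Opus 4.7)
The plan is to upper bound the probability of the complementary event
\[
F_M \;=\; \bigcap_{m\in [\delta n/3,\, \delta n]}\bigl\{\,|\bbG_{m,n}| > M\,\bigr\}
\]
conditionally on $\cond$ by $\mathrm{e}^{-\delta n}$, for a suitable universal constant $M$.

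\textbf{Step 1 (Volume lower bound from $F_M$).}
First I would use the elementary monotonicity $\bbG_{m,n}\subset \sfC_{\delta n/3,n}\cap\partial\Lambda_m$ for every $m\ge \delta n/3$, which follows from the fact that any path using edges of $E_n\setminus E_m$ is also a path using edges of $E_n\setminus E_{\delta n/3}$. The circles $\partial\Lambda_m$ being pairwise disjoint for distinct integer $m$, summing over $m\in[\delta n/3,\delta n]$ yields that on $F_M$ the set $\sfC_{\delta n/3,n}$ carries at least $\tfrac{2M\delta n}{3}$ vertices inside the annulus $A_{\delta n/3,\delta n}$. Thus $F_M$ is contained in the ``large deep volume'' event $V_M=\{|\sfC_{\delta n/3,n}\cap A_{\delta n/3,\delta n}|>\tfrac{2M\delta n}{3}\}$.

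\textbf{Step 2 (Decoupling the conditioning).}
The conditioning $\cond$ forces certain $\omega$-open paths separating the dual boundary sectors $S_i$. By the sharp triangle inequality \eqref{eq:sharp-ti}, the minimal-$\tau$-cost realisation of $\cond$ is a Steiner-tree-like structure of forced paths that can be placed inside a thin boundary strip of width $o(\delta n)$ around $\partial\Lambda_n$. An Ornstein--Zernike-type surgery argument, combined with the ratio strong-mixing property \eqref{eq:ratio-mixing} applied to the deep portion of the annulus (which sits far from the support of the skeleton), then yields a comparison of the form
\[
\mu^\sff_{\Lambda_n}\bigl(V_M\bigm|\cond\bigr)
\;\le\; C\cdot\mu^\sff_{\Lambda_n}\bigl(V_M\bigr),
\]
with $C=C(\beta,q)$. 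In effect, the skeleton of $\cond$ contributes only $O(1)$ to each $|\bbG_{m,n}|$, so any large contribution to $V_M$ must come from a portion of $\omega$ that behaves essentially like an unconditioned subcritical FK configuration.

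\textbf{Step 3 (Subcritical exponential decay).}
Finally, the unconditional probability of $V_M$ is estimated via Proposition \ref{reference subcritical}: any cluster contributing to $\sfC_{\delta n/3,n}\cap A_{\delta n/3,\delta n}$ touches $\partial\Lambda_n$, so a union bound over its $O(n)$ possible anchor points on $\partial\Lambda_n$ together with the exponential volume tail $\mathrm{e}^{-c m}$ for a single cluster of size $m$ gives
\[
\mu^\sff_{\Lambda_n}(V_M) \;\le\; n^{O(1)}\mathrm{e}^{-c\,2M\delta n/3}.
\]
Choosing $M$ a universal constant with $2cM/3\ge 2$ yields $\mu^\sff_{\Lambda_n}(F_M\mid\cond)\le \mathrm{e}^{-\delta n}$ for $n$ large enough, as required.

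\textbf{Main obstacle.} The delicate point is Step 2: the boundary configuration $\sigma$ can be arbitrarily intricate, so $\cond$ is a genuinely non-local event with probability as small as $\mathrm{e}^{-\Theta(n)}$ in the worst case. Localising the effect of $\cond$ to a thin boundary strip — and hence decoupling it from the large-volume event $V_M$ in the bulk — is the technically hardest part of the proof, and requires the full strength of the subcritical Ornstein--Zernike theory of \cite{CamIofVel08} together with the sharp triangle inequality of \cite{BefDum10}.
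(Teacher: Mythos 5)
The paper's proof is considerably more elementary and avoids the decoupling you attempt in Step 2 entirely; it simply divides by the crude lower bound
\[
\mu^\sff_{\Lambda_n}(\cond)\;\geq\;\mu^\sff_{\Lambda_n}(\text{all edges of }\partial E_n\text{ open})\;\geq\;\tilde p^{\,8n},
\]
(obtained by stochastic domination over i.i.d.\ Bernoulli$(\tilde p)$), and then beats this exponential factor by choosing $M$ (and an auxiliary parameter $R$) large enough. Your Step 2, in contrast, claims a comparison $\mu^\sff_{\Lambda_n}(V_M\mid\cond)\le C\,\mu^\sff_{\Lambda_n}(V_M)$ with a universal constant $C$, and I do not think this can be made to work. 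The event $V_M$ is increasing, and $\cond$ is an increasing event in the primal configuration, so by FKG the conditioning \emph{increases} the probability of $V_M$; the only general upper bound is $\mu(V_M)/\mu(\cond)$, which is exponentially larger than $\mu(V_M)$. Indeed the conditioning, for intricate $\sigma$, can force $\Theta(n)$ crossings emanating from the boundary, and the portion of these crossings that penetrate into the annulus is not decoupled from the bulk — your event $V_M$ is precisely about sites connected to $\partial\Lambda_n$, so it \emph{depends on} edges in the boundary strip you want to decouple from. An OZ-surgery argument might be made to work with substantial effort, but nothing like a constant factor $C(\beta,q)$ independent of $\sigma$ and $n$ will come out of it. The paper sidesteps all of this: because $M$ is a free parameter, there is no need to decouple; one simply pays $\tilde p^{-8n}$.

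Your Step 3 also has a gap. Proposition~\ref{reference subcritical} gives an exponential tail for the size of a \emph{single} cluster, but the volume in $V_M$ can be spread over many clusters. Your bound $n^{O(1)}\mathrm{e}^{-c\cdot 2M\delta n/3}$ therefore does not follow from a union over anchor points of a single-cluster tail. The paper handles this by splitting into two sub-events: either there exist more than $R$ disjoint crossings of the annulus (Lemma~\ref{lem:1}, giving $\mathrm{e}^{-cRn}$), or some single crossing cluster has cardinality $\geq Mn/(6R)$ (giving $\mathrm{e}^{-cMn/(6R)}$). Both $R$ and $M$ are then tuned so that each term dominates the $\tilde p^{-8n}$ factor. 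You would need an analogous dichotomy (number of clusters versus size of the largest) to make your Step 3 rigorous. In summary, your Step 1 matches the paper, but Steps 2 and 3 as written contain genuine gaps; the correct and simpler route is the brute-force division by $\mu(\cond)$ together with the two-case decomposition of the volume event.
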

The notation $M$ will now be reserved for an integer $M>0$ satisfying the
previous proposition. We shall prove this Proposition for $\delta =1$; the
general case follows by a straightforward adaptation.
\begin{definition}
Let $\calE_r$ be the event that there exist $r$ disjoint crossings of
$A_{n/3,n/2}$.
\end{definition}
\begin{lemma}\label{lem:1}
For all $r\geq 1$ and $n>0$,
\[
\FK(\calE_r)\leq {\rm e}^{-crn},
\]
where $c>0$ is defined in Proposition~\ref{reference subcritical}.
\end{lemma}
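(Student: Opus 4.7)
The plan is to prove the bound by iterating the base case via exploration and FKG domination. The base case $r=1$ follows immediately from Proposition~\ref{reference subcritical} combined with FKG monotonicity in boundary conditions: conditioning on the edges outside $A_{n/3,n/2}$ gives, on the annular edges, an FK measure which, for the increasing event $\calE_1$, is stochastically dominated by $\mu^{\sfw}_{A_{n/3,n/2}}$, bounded by $e^{-cn}$.

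For the iterative step, on $\calE_r$ we canonically reveal one open crossing at a time via a leftmost-path exploration. Concretely, enumerate the vertices of $\partial\Lambda_{n/3}$ lexicographically, let $v_*$ be the smallest-indexed vertex connected to $\partial\Lambda_{n/2}$ by an open path in the currently unrevealed edges, and extract a specific crossing $\Gamma_k$ via a leftmost-path rule; this reveals $\Gamma_k$ together with the closed edges on its ``left'' side certifying the canonicity of the choice. Let $\calR_k$ denote the set of all edges revealed so far. By the domain Markov property of FK, conditional on $\calR_k$ the unrevealed edges form an FK measure on $A_{n/3,n/2}\setminus\calR_k$ with boundary conditions inherited from the wirings in $\calR_k$. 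Since the exploration lies entirely on one side of each $\Gamma_j$, the residual subgraph still has inner and outer boundaries separated by a distance $\Theta(n)$. The event ``another disjoint crossing exists'' is increasing, so FKG dominates its conditional probability by the wired measure on the residual subgraph, to which the subcritical exponential decay of~\cite{BefDum10} applies uniformly, giving the conditional bound $e^{-cn}$. Iterating $r$ times,
\begin{equation*}
\FK(\calE_r)\leq (e^{-cn})^r = e^{-crn}.
\end{equation*}

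The main technical subtlety is the rigorous setup of the leftmost-path exploration: it must (a) canonically identify $\Gamma_k$ in a measurable way, (b) yield a bona fide FK measure on the unrevealed edges under conditioning, and (c) leave a residual subgraph in which exponential decay of long connections remains uniform in the induced boundary conditions. All three follow from standard properties of subcritical FK measures---the domain Markov property, FKG monotonicity in boundary conditions, and the sharp decay of connectivities of~\cite{BefDum10}---though making the iteration go through cleanly requires some care with the bookkeeping.
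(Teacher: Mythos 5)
Your proposal is correct and uses essentially the same strategy as the paper: an iterative exploration argument that peels off crossings one at a time, applies the domain Markov property and FKG domination by the wired measure, and then invokes the exponential decay from Proposition~\ref{reference subcritical}. The only substantive difference lies in the exploration scheme: you reveal crossings via a leftmost-path rule on $\partial\Lambda_{n/3}$ (revealing each crossing together with the closed edges certifying its leftmost-ness), whereas the paper enumerates boundary vertices in cyclic order and reveals, in one shot, the union $\calS$ of the clusters of $x_1,\ldots,x_k$ where $k$ is minimal with $r$ crossings among them. The paper's choice avoids some of the bookkeeping you flag as delicate: since $\calS$ is a union of entire clusters, the edges from $\calS$ to $A_{n/3,n/2}\setminus\calS$ are automatically closed, so the conditional law on the residual edges is an FK measure dominated by $\mu^\sfw_{A_{n/3,n/2}}$ restricted to those edges, and one concludes directly with $\mu^\sfw_{A_{n/3,n/2}}(\calE_1)\le e^{-cn}$ with no need to worry about the shape or boundary geometry of the residual subgraph. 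One small imprecision in your write-up worth fixing: rather than arguing that ``exponential decay applies uniformly to the residual subgraph,'' it is cleaner to dominate the conditional measure on the residual edges by the full-annulus wired measure restricted to those edges (using that closing the extra edges is a decreasing conditioning), so that the crossing bound is just the unconditioned $\mu^\sfw_{A_{n/3,n/2}}(\calE_1)$; this is the step the paper relies on and it makes the uniformity automatic.
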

\begin{proof}
We prove that for all $r\geq 1$ and $n>0$,
\begin{equation}\label{ab}
\FK(\calE_r)
\leq
\bigl(\mu^{\sfw}_{A_{n/3,n/2}}(\calE_1)\bigr)^{r}.
\end{equation}
The conclusion will then follow easily, since Proposition~\ref{reference
subcritical} implies that $\mu^{\sfw}_{A_{n/3,n/2}}(\calE_1)\leq \exp(-cn)$.

In order to prove~\eqref{ab}, we proceed by induction. First, note that $\FK$
restricted to $A_{n/3,n/2}$  is stochastically dominated by
$\mu_{A_{n/3,n/2}}^{\sfw}$.

Let $r\geq 1$ and consider $\FK(\calE_{r+1}|\calE_r)$. We number the vertices of
$\partial\Lambda_n=\{x_1,\ldots,x_{4n+4}\}$ in clockwise order, starting at the
bottom right corner. Let $k$ be the smallest number such that there are $r$
crossings among the clusters containing $x_1,\ldots,x_k$. Denote by $\calS$ the
union of these clusters (which may contain isolated vertices). Observe that all
edges in $A_{n/3,n/2}\setminus\calS$ which are incident to vertices of $\calS$
are closed. Therefore, the conditional measure  
$\FK(\cdot_{|A_{n/3,n/2}\setminus\calS} |\calS)$ is stochastically dominated by
$\mu^\sfw_{A_{n/3,n/2}}(\cdot_{|A_{n/3,n/2}\setminus\calS})$. 
{In both instances above, the symbol $\nu(\cdot_{|B} )$ means the restriction 
of $\nu$ to edges of the graph with the vertex set $B$.} 
As a result, the
probability, under $\FK(\cdot_{|A_{n/3,n/2}\setminus\calS} |\calS)$, that 
there exists a crossing of
$A_{n/3,n/2}$ is smaller than $\mu^{\sfw}_{A_{n/3,n/2}}(\calE_1)$. We obtain
\begin{align*}
\FK(\calE_{r+1})
&=
\FK (\calE_{r+1}|\calE_r)\FK(\calE_r)
=
\FK [\FK(\calE_{r+1}|\calS)]\FK(\calE_r)\\
&\leq
\mu^{\sfw}_{A_{n/3,n/2}}(\calE_1)\FK(\calE_{r})\leq\mu^{\sfw}_{A_{n/3,n/2}}
(\calE_1)^{r+1}.
\end{align*}
\end{proof}

\begin{proof}[Proof of Proposition~\ref{flowers-ndelta}]
Obviously,
\begin{equation}
\label{slim}
\mu_{\Lambda_n}^\sff\bigl(\forall m\in\left[\tfrac n3,\tfrac
n2\right]:|\bbG_{m,n}|> M \bigm| {\rm Cond}_n[\sigma]\bigr)
\leq
\frac{\mu_{\Lambda_n}^\sff\bigl(\forall m\in[\tfrac n3,\tfrac n2]:|\bbG_{m,n}|>
M\bigr)}{\mu_{\Lambda_n}^\sff({\rm Cond}_n[\sigma])}.
\end{equation}
Let us bound from below the denominator of~\eqref{slim}. If all the edges  of
$\partial  E_n$ are open, then ${\rm Cond}_n [\sigma]$ occurs. Moreover, the
measure $\FK$ stochastically dominates independent Bernoulli edge percolation on
$\lbr 0,1\rbr^{ E_n}$ with $\tilde{p} = p/(p+ (q-1)p)$,
see~\cite[Theorem~4.1]{ACCN88}. We deduce
\begin{equation}
\label{lower bound cleaning}
\FK({\rm Cond}_n[\sigma])\geq \FK (\mbox{\rm all the edges in $\partial E_n$ are
open}) \geq \tilde{p}^{8n}.
\end{equation}
Let us now bound from above the numerator of~\eqref{slim}. First,
\[
\FK\bigl(\forall m\in\bigl[\tfrac n3,\tfrac n2\bigr]:|\bbG_{m,n}| > M\bigr)
\leq
\FK\bigl(|\sfC_{n/3,n}\cap A_{n/3,n/2}|\geq M n/6\big).
\]
Fix $R>0$. If $|\sfC_{n/3,n}\cap A_{n/3,n/2}|\geq M n/6$, either $A_{n/3,n/2}$
contains more than $R$ crossings or one of the crossings has cardinality larger
than $M n/(6R)$. Proposition~\ref{reference subcritical} implies that the
probability of having clusters with size larger than $M n/(6R)$ in
$\Lambda_{n/2}$ is smaller than $\exp[-c M n/(6R)]$ for $n$ large enough.
Lemma~\ref{lem:1} together with~\eqref{slim} implies that, for $n$ large enough,
\[
\FK\bigl(\forall m\in\bigl[\tfrac n3,\tfrac n2\bigr]:|\bbG_{m,n}| > M \bigm|
{\rm Cond}_n[\sigma] \bigr)
\leq
\tilde p^{-8n}[{\rm e}^{-cRn}+{\rm e}^{-c Mn/(6R)}]\leq {\rm e}^{-n},
\]
provided that $R$ and $M$ be sufficiently large.
\end{proof}

\subsection{Reduction to FK measures on flower domains with free boundary
condition} \label{sec:redu}

We define
\begin{equation}\label{eq:Mn}
\calM_n
=
\max \{ m\leq n : \abs{\bbG_{m,n }}\leq M \},
\end{equation}
where the maximum is set to be equal to $\infty$ if there is no $m\le n$ such
that $\abs{\bbG_{m,n }}\leq M$. With this notation, we actually proved that
$\calM_n \in [\tfrac{n}{3}, n]$ with probability {bounded below by} 
$1-{\rm e}^{-{n} }$.

\smallskip
Let $\calC$ be  a possible realization of $\sfC_{m,n}$ and $\calD=\calD_{m,n}$
be the corresponding flower domain. The restriction of $\FK\lb\cdot~|~\calM_n =m
;\sfC_{m, n}=\calC\rb$ {to $\calD$} is $\FKflower$. Furthermore,
\[
\cond\cap\{ \calM_n=m\} \cap \{ \sfC_{m,n} = \calC \}
\]
is a product event $\Omega_{\sigma,\calC}\times \{ \calM_n= m ;
\sfC_{m,n}=\calC\}$, where $\Omega_{\sigma ,\calC}\subset \lbr
0,1\rbr^{\calE_{\calD }}$. Then
\begin{equation}\label{eq:reduction1}
\FK(\sfC\cap\Lambda_{n^\varepsilon}\neq\emptyset \Cond;\calM_n=m;\sfC_{m,
n}=\calC)
=
\FKflower ( \sfC_{\bbG} \cap\Lambda_{n^\varepsilon}\neq\emptyset \,|\,
\Omega_{\sigma ,\calC}) .
\end{equation}
The event $\Omega_{\sigma ,\calC}$ has an obvious  structure. It corresponds to
the existence of certain connections between different sites of $\bbG =
\calC\cap\partial\Lambda_m = \calD\cap\partial\Lambda_m$.  More precisely, let
$\calP_{\bbG}$ be the collection of different partitions of $\bbG$. Elements of
$\calP_\bbG$  are of the form $\underline{\bbG} = \lb \bbG_1, \dots
,\bbG_\ell\rb$. Define
\[
\Omega_{\underline{\bbG}}
=
\bigcap_{i}\bigcap_{u,v\in\bbG_i}\{ u\leftrightarrow v\} \subset
\{0,1\}^{\calE_{\calD}} .
\]
Let us say that a partition $\underline{\bbG}$ is compatible with 
$\Omega_{\sigma ,\calC}$ if $\Omega_{\underline{\bbG}}
\subseteq \Omega_{\sigma ,\calC}$.
Note that we do not rule out that some elements $\bbG_i$ of a partition
$\underline{\bbG}$ are singletons.  If $\bbG_i$ is a singleton, then
$\bigcap_{u,v\in\bbG_i}\{ u\leftrightarrow v\}$ is, of course, a sure event,
which could be dropped from the definition of $\Omega_{\underline{\bbG}}$. In
other words,  only non-singleton elements of $\underline{\bbG}$ are relevant for
$\Omega_{\underline{\bbG}}$. Also note that the events
$\Omega_{\underline{\bbG}}$ do not have to be disjoint.  Still, for any
$\sigma$,
\[
\Omega_{\sigma ,\calC}
=
\bigcup_{\underline{\bbG}\in\calP^\prime_\bbG} \Omega_{\underline{\bbG}},
\]
where the  set $\calP^\prime_\bbG$ corresponds to  partitions which are
compatible with the occurrence of the event $\Omega_{\sigma,\calC}$, and which
are \emph{maximal} in the sense that one cannot find a finer partition which 
would be still
compatible with $\Omega_{\sigma,\calC}$.

The previous section implies the following reduction, which we will now consider
for the rest of this work.
\begin{proposition}
\label{prop:reduction2}
Fix $\delta >0$.  Then, writing $B_k$ for the $k^{th}$ Bell number, which counts
the number of partitions of a set of $k$ elements,
\begin{equation}\label{eq:reduction2}
\FK \bigl( \sfC\cap\Lambda_{n^\varepsilon}\neq\emptyset \bigm| \cond\bigr)
\leq
{\rm e}^{-\delta n} +
B_{M} q ^{M} \max\FKff \bigl( \sfC_\bbG \cap
\Lambda_{n^\varepsilon}\neq\emptyset \bigm| \Omega_{\underline{\bbG}}\bigr) ,
\end{equation}
for all boundary conditions $\sigma$ and $n$ sufficiently large. The above
maximum is over all flower domains $\calD$ rooted at $m\in [\tfrac n3, n]$ with
at most $\abs{\bbG}\leq M$ marked points, and over all partitions
$\underline{\bbG}\in \calP'_\bbG$.
\end{proposition}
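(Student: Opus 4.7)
My plan is to decompose $\FK(\sfC \cap \Lambda_{n^\varepsilon} \ne \emptyset \mid \cond)$ in three successive steps, peeling off one layer of structure at a time: the value of the flower root $\calM_n$, the external cluster $\sfC_{\calM_n, n}$, and the partition data encoded in $\Omega_{\sigma, \calC}$. The tools are Proposition~\ref{flowers-ndelta} for the first step, the identity~\eqref{eq:reduction1} for the second, and a union bound combined with a Radon--Nikodym comparison between the wired flower measure $\FKflower$ and the free measure $\FKff$ for the third.

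First, Proposition~\ref{flowers-ndelta} (applied with parameter $\delta$) ensures that $\calM_n \geq \delta n/3$ with $\FK(\cdot\mid\cond)$-probability at least $1-e^{-\delta n}$; together with the trivial bound $\calM_n \leq n$, this restricts the analysis to the admissible range at the cost of an additive error $e^{-\delta n}$. Second, I decompose according to the joint value $(\calM_n, \sfC_{\calM_n, n}) = (m, \calC)$: for each admissible pair, the identity~\eqref{eq:reduction1} gives
\[
\FK\bigl(\sfC \cap \Lambda_{n^\varepsilon}\neq\emptyset \bigm| \cond, \calM_n=m, \sfC_{m,n}=\calC\bigr) = \FKflower\bigl(\sfC_{\bbG}\cap\Lambda_{n^\varepsilon}\neq\emptyset \bigm| \Omega_{\sigma,\calC}\bigr),
\]
so bounding each such conditional probability by its maximum over admissible $(m,\calC)$ eliminates this layer.

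Third, the identity $\Omega_{\sigma,\calC}=\bigcup_{\underline{\bbG}\in\calP'_\bbG}\Omega_{\underline{\bbG}}$, combined with a union bound on the numerator and the trivial lower bound $\FKflower(\Omega_{\sigma,\calC})\geq\max_{\underline{\bbG}}\FKflower(\Omega_{\underline{\bbG}})$ on the denominator, yields
\[
\FKflower\bigl(\sfC_\bbG\cap\Lambda_{n^\varepsilon}\neq\emptyset \bigm| \Omega_{\sigma,\calC}\bigr) \leq |\calP'_\bbG|\cdot\max_{\underline{\bbG}\in\calP'_\bbG}\FKflower\bigl(\sfC_\bbG\cap\Lambda_{n^\varepsilon}\neq\emptyset\bigm|\Omega_{\underline{\bbG}}\bigr),
\]
with $|\calP'_\bbG|\leq B_{|\bbG|}\leq B_M$. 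To pass from $\FKflower$ (with wiring on $\bbG$ inherited from $\calC$) to $\FKff$ (free boundary condition), I use that both measures live on $\{0,1\}^{\calE_{\calD}}$ and differ only in how they count clusters intersecting $\bbG$. Their Radon--Nikodym derivative is a product $q^{\Delta k}\cdot Z^{\sff}_\calD/Z^{\mathrm{flower}}_\calD$ in which both factors are controlled by $q^{|\bbG|-1}\leq q^M$, since wiring modifies the cluster count of any configuration by at most $|\bbG|-1$. This uniform density bound survives conditioning on $\Omega_{\underline{\bbG}}$ and produces the announced extra factor $q^M$.

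The main obstacle is this last comparison: neither the partition-function ratio $Z^{\sff}_\calD/Z^{\mathrm{flower}}_\calD$ nor the effect of conditioning on $\Omega_{\underline{\bbG}}$ can be ignored, and both must be bounded uniformly in $\calC$ and $\underline{\bbG}$. The crucial observation is that on $\Omega_{\underline{\bbG}}$ the partition of $\bbG$ induced by $\omega$-clusters is a coarsening of $\underline{\bbG}$, hence has at most $|\bbG|\leq M$ classes; this controls $|\Delta k|$ by $M-1$ and analogously bounds the partition-function ratio.
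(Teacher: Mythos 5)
Your proposal is correct and follows precisely the argument the paper sketches: restrict to $\calM_n\in[n/3,n]$ via Proposition~\ref{flowers-ndelta}, decompose on $(\calM_n,\sfC_{\calM_n,n})$ and apply \eqref{eq:reduction1}, union-bound over $\calP'_\bbG$ to get $B_M$, and compare $\FKflower$ with $\FKff$ via a Radon--Nikodym bound to get $q^M$. Two trivial remarks: you should apply Proposition~\ref{flowers-ndelta} with $\delta=1$ (and use $e^{-n}\le e^{-\delta n}$) so the max is over $m\in[n/3,n]$ as stated rather than over $[\delta n/3, n]$; and bounding both the pointwise factor $q^{\Delta k}$ and the partition-function ratio by $q^{M-1}$ gives $q^{2(M-1)}$ for the conditional comparison, not $q^M$ --- but since $Z^\sff_\calD/Z^{\mathrm{flower}}_\calD$ cancels between numerator and denominator of the conditional probability, one actually gets $q^{M-1}\le q^M$, and in any case only the fact that the constant depends on $(q,M)$ alone matters downstream.
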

Above the term $q^{M}$ comes from the fact that the elements of $\bbG$ are
possibly wired together. It then bounds the Radon-Nikodym derivative between
measures $\FKflower$ and $\FKff$. The quantity $B_{M}$ bounds from above the
number of sub-partitions of $\bbG$ (the events $\Omega_{\underline{\bbG}}$ being
not necessarily disjoint).

\section{Macroscopic structure near the center of the box}

This section studies the macroscopic structure of the set $\sfC$ of sites
connected to the boundary of $\Lambda_n$. Its main result,
Proposition~\ref{tripod} below, implies that on a sufficiently small scale
$\delta >0$, the intersection $\sfC\cap\Lambda_k$ for boxes with
$k\in[\frac{\delta n}{3},\delta n]$
is with an overwhelming probability either empty, or close to a segment, or
close to a tripod (three segments coming out from a point).

Before starting, note that Proposition~\ref{prop:reduction2} enables us to
restrict attention to a flower domain $\calD=\calD_{m,n}$ with $m\in[\frac
n3,n]$ and $\abs{\bbG_{m,n}}\le M$. We set $\bbG=\bbG_{m,n}$. We now fix this
flower domain and work under $\FKff \lb \cdot ~\big|~
\Omega_{\underline{\bbG}}\rb $ for some $\underline \bbG\in \calP_{\bbG}'$. All
constants in this section are independent of $\calD_{m,n}$ and $\ubbG$ as long
as $|\bbG_{m,n}|\le M$. We will often recall this independence by using the
expression ``uniformly in $(\calD,\ubbG)$ with $|\bbG|\le M$''.
\bigbreak

Define $\sfC_{k,\bbG}$ to be the {set of edges connected to $\bbG$ in
$\calD\setminus\Lambda_k$ (which can consist of several connected components)}.
Note that $\bbG_{k,n}=\sfC_{k,n}\cap\partial \Lambda_k=\sfC_{k,\bbG}\cap\partial
\Lambda_k$. Given $v_1,v_2\in\R^2$, we define $[v_1,v_2]$ to be the line segment
with endpoints $v_1$ and $v_2$, and $\Angle{v_1}{v_2}$ to be the angle between
$v_1$ and $v_2$, seen as vectors in the plane. We refer to
Fig.~\ref{fig:3situations} for an illustration of the following definitions.

\begin{definition}\label{E1E2E3}
For $k< m$, $\nu >0$ and $\ell =1,2,3$, let us say that $E_{\nu ,k}^\ell \subset
\lbr 0,1\rbr^{\calE_\calD }$ occurs if S$\ell$ below  happens:
\begin{itemize}
\item[S1.] $\bbG_{k,n}=\emptyset$.
\item[S2.] $\bbG_{k,n} = \bbV_{k,n}^1\cup \bbV_{k,n}^2$,
where $\bbV_{k,n}^1 , \bbV_{k,n}^2$ are two disjoint sets of $\tau$-diameter
less than or equal to $\nu k$. Moreover,
\begin{itemize}
\item Each of the sets $\bbV_{k,n}^1$ and $\bbV_{k,n}^2$ is  connected in
$\sfC_{k, \bbG}$.
\item For any two vertices $v_i\in \bbV_{k,n}^i$; $i=1,2$, we have $[v_1
,v_2]\cap \Lambda_{k/2}\neq \emptyset$.
\end{itemize}
\item[S3.] $\bbG_{k,n} = \bbV_{k,n}^1\cup \bbV_{k,n}^2 \cup \bbV_{k,n}^3$, where
$\bbV_{k,n}^1$, $\bbV_{k,n}^2$ and $\bbV_{k,n}^3$ are disjoint sets with
$\tau$-diameter less than or equal to $\nu k$. Moreover,
\begin{itemize}
\item Each of the sets $\bbV_{k,n}^1$, $\bbV_{k,n}^2$ and $\bbV_{k,n}^3$ is
connected in $\sfC_{k ,\bbG}$,
\item For any choice of $v_i\in  \bbV_{k,n}^i;\ i=1,2,3$. there exists $x\in
\Lambda_{k/2}$ such that $\calT  = \lbr v_1 ,v_2 ,v_3 ; x\rbr$ is a Steiner
tripod  (see Definition~\ref{rem:Tripod} below). In particular, as it follows
from P2 of Proposition~\ref{prop:Steiner} below, $\Angle{v_i - x}{v_j -x} >
\frac{\pi}{2} +\eta$ for every $i\neq j$ .
\end{itemize}
\end{itemize}
\end{definition}

\begin{figure}
\begin{center}
\includegraphics[width=0.30\textwidth]{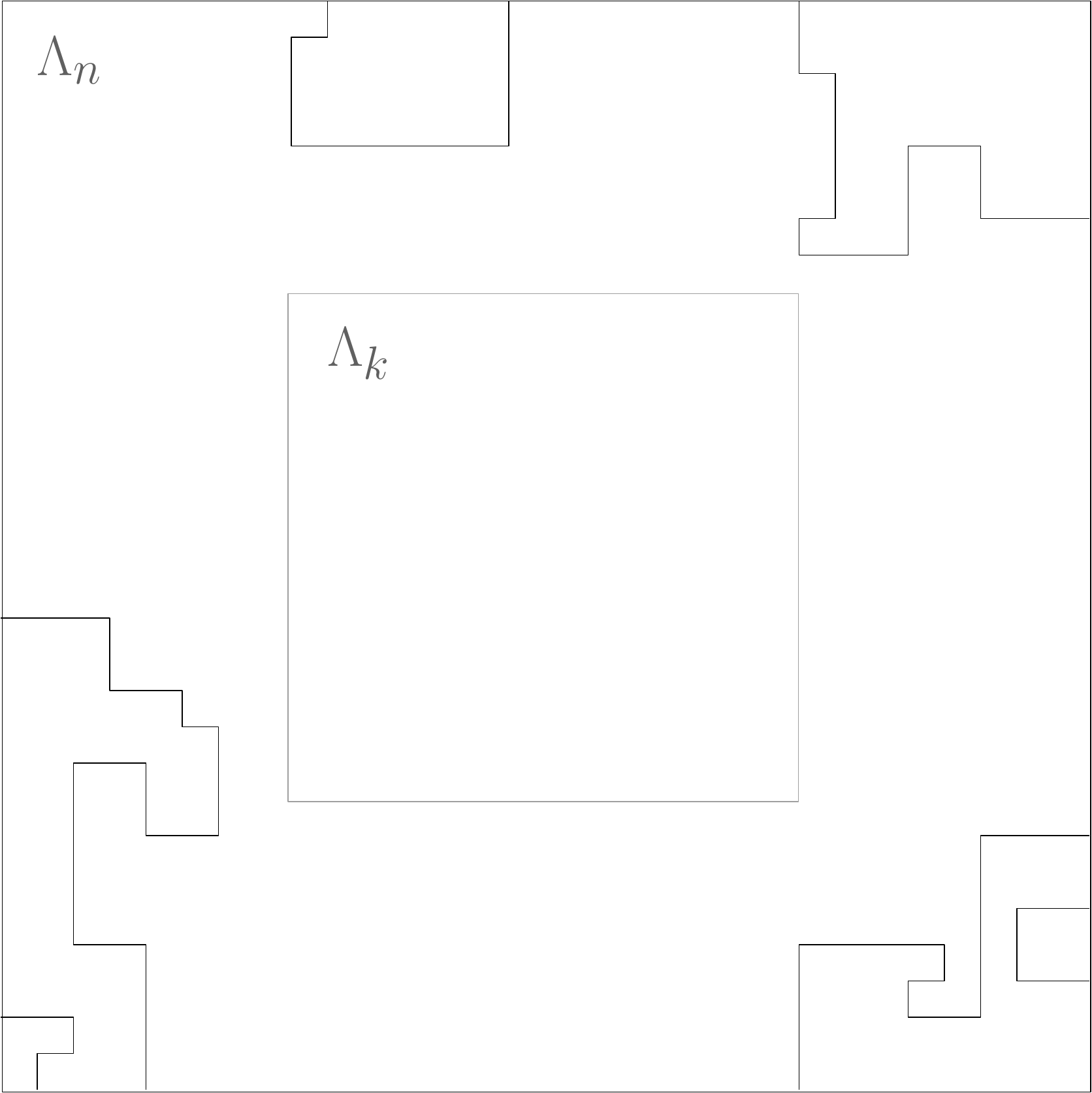}  \quad
\includegraphics[width=0.30\textwidth]{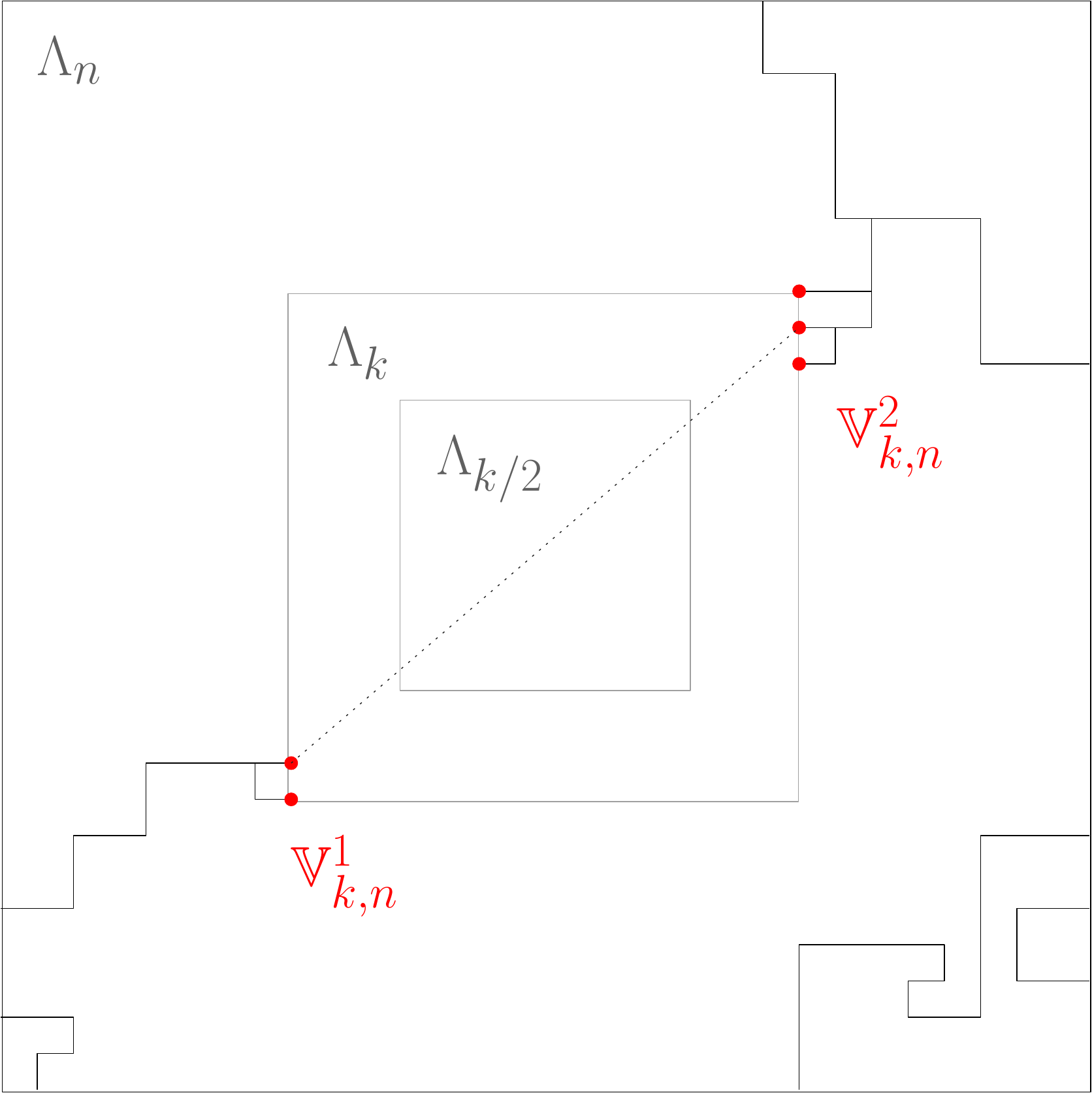}  \quad
\includegraphics[width=0.30\textwidth]{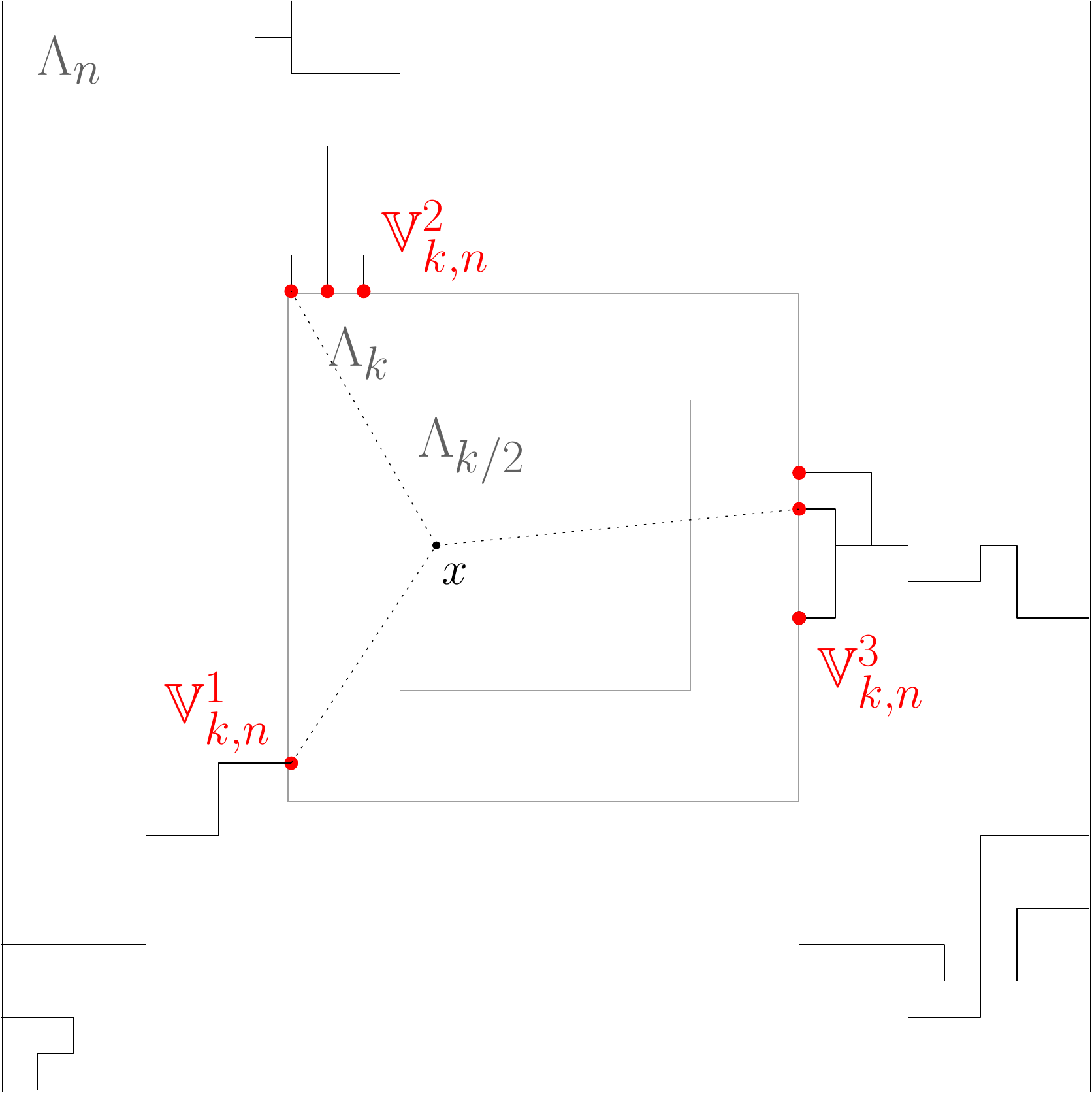}
\end{center}
\caption{Description of the events $E^\ell_{\nu,k}$, $\ell=1,2,3$ from left to
right. The set $\mathbb G_{k,n}$, partitioned into $\bbV^\ell_{k,n}$,
$\ell=1,2,3$, is indicated in red.}
\label{fig:3situations}
\end{figure}
\medbreak
We are now in a position to state the main proposition.
\begin{proposition}\label{tripod}
For any $\nu >0$, there exist $\delta =\delta(\nu,M)>0$ and $\kappa=\kappa
(\nu,M) > 0$ such that
\begin{equation}\label{tripod-flower-weird}
\FKff\Bigl(\ \bigcup_{k\ge\delta n} \big(E_{\nu ,k}^1\cup E_{\nu,k}^2\cup
E_{\nu,k}^3\big)\cap\big\{ |\bbG_{k,n}|\le M\big\}\ \Bigm|\
\Omega_{\underline{\bbG}}\ \Bigr) \ge 1- {\rm e}^{-\kappa n},
\end{equation}
uniformly in $(\calD,\ubbG)$ with $|\bbG|\le M$.
\end{proposition}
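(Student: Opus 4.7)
The plan is to decouple the problem into a probabilistic step and a purely geometric step. On the probabilistic side, I would prove that under $\FKff(\cdot \mid \Omega_{\underline{\bbG}})$, the cluster $\sfC_\bbG$ lies, with probability $1 - e^{-\kappa n}$, in a $\tau$-Hausdorff neighborhood of some minimum-$\tau$-length Steiner forest $T^*$ realizing the imposed connections of $\underline{\bbG}$. On the deterministic side, I would establish that any Steiner forest with at most $M$ leaves on $\partial \Lambda_m$ (with $m \ge n/3$) admits a scale $k \ge \delta n$, with $\delta = \delta(\nu, M) > 0$, at which its intersection with $\Lambda_k$ is either empty, close to a single segment crossing $\Lambda_{k/2}$, or close to a Steiner tripod rooted in $\Lambda_{k/2}$.

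For the probabilistic step I would combine two classical tools. The sharp triangle inequality~\eqref{eq:sharp-ti} provides a strictly positive cost penalty $\rho(|x|+|y|-|x+y|)$ for every deviation from the $\tau$-geodesic, which, together with the exponential decay of connectivities (Proposition~\ref{reference subcritical}), implies that any cluster realizing $\Omega_{\underline{\bbG}}$ with total $\tau$-length exceeding $|T^*|_\tau + \varepsilon n$ has conditional probability at most $\exp(-\rho \varepsilon n)$. The Ornstein--Zernike theory of~\cite{CamIofVel08} then refines this to a diffusive tube localization around every straight branch of $T^*$; since each tree of the forest has at most $2M-3$ branches, a union bound over branches yields, with probability $1 - e^{-\kappa n}$, the $\nu k$-tubular containment on every scale $k \ge \delta n$, and a posteriori also the side constraint $|\bbG_{k,n}|\le M$.

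The geometric step is where I expect the main obstacle to lie. I would track the combinatorial type of $T^* \cap \Lambda_k$ as a function of $k$: it can only change at $O(M)$ critical values of $k$ at which $\partial \Lambda_k$ sweeps through a Steiner vertex or through an edge tangent point, so by pigeonhole some subinterval of $[\delta n, n]$ of length $\Omega(n/M)$ hosts a constant type. The angular constraint at Steiner vertices (degree exactly $3$, with pairwise angles strictly greater than $\pi/2$) then forces every such constant type with at most three branches reaching $\Lambda_{k/2}$ to be one of S1, S2, or S3. The delicate point, requiring strict convexity of $\tau$ and~\eqref{eq:sharp-ti} in an essential way, is to rule out degenerate configurations where several Steiner vertices cluster at scales $\ge \delta n$ and force four or more branches through $\Lambda_k$: such configurations can always be strictly shortened by merging the nearby Steiner vertices into a single effective tripod, contradicting the minimality of $T^*$. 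Uniformity in the flower domain $\calD$ and the partition $\underline{\bbG}$ is then handled by a union bound over the $q^M B_M$ choices from Proposition~\ref{prop:reduction2}, which, since $M$ is fixed, merely modifies the constants $\delta, \kappa$.
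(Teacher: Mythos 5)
Your two-step decomposition (probabilistic localization of $\sfC_\bbG$ near a minimal Steiner forest, followed by a deterministic classification of the local geometry of that forest at some scale $k\ge\delta n$) is exactly the paper's strategy, and your probabilistic step correctly identifies the key mechanism (the sharp triangle inequality~\eqref{eq:sharp-ti} penalizing $\tau$-excess, applied through a coarse-grained skeleton, giving the analogue of Proposition~\ref{upper-away}). However, there are some inaccuracies and one genuine gap. On the probabilistic side, you over-claim: diffusive ($O(\sqrt{n})$) tube localization is neither achievable with $1-e^{-\kappa n}$ probability nor needed here — the events $E^\ell_{\nu,k}$ only require $\nu k$-neighborhoods, i.e.\ \emph{linear}-scale tubes, and the OZ machinery plays no role at this stage (it enters later, in the fluctuation analysis of Section~\ref{sec:Fluctuations}). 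Also, the constraint $|\bbG_{k,n}|\le M$ is not ``a posteriori'' automatic from tubular localization; the paper gets it from Proposition~\ref{flowers-ndelta} applied at the relevant scale.

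The genuine gap is in the geometric step. Your pigeonhole over ``constant combinatorial type'' intervals reduces matters to ruling out $\ge4$ branches through $\Lambda_k$, and you assert that such configurations can be ``strictly shortened by merging the nearby Steiner vertices.'' This is exactly the content that needs proof, and a direct shortening argument does not obviously yield a \emph{uniform} quantitative gap: as the nearby nodes coalesce, the savings can vanish. The paper handles this by proving the quantitative well-separation property P3 of Proposition~\ref{prop:Steiner} (any two nodes of a tree inside $\Lambda_{m/2}$ are at $\tau$-distance $\ge\delta_1 m$, and likewise for distinct trees), and the proof of P3 is a compactness/contradiction argument using P1 (finiteness and compactness of $\Omega^{\rm min}_{M,m}$) and the degree-3 structure from P2; this gives a concrete $\delta_1$, hence a concrete $\delta$, without any pigeonhole. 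Finally, your uniformity argument is wrong as stated: one cannot union-bound over flower domains $\calD$, of which there are exponentially many. Uniformity in $(\calD,\ubbG)$ is obtained because the constants in Propositions~\ref{prop:Steiner} and~\ref{upper-away} depend only on $M$ (not on $\calD$, $\ubbG$, or $n$); the factor $q^M B_M$ in Proposition~\ref{prop:reduction2} arises for a different reason (Radon--Nikodym derivative and number of compatible partitions) and is irrelevant to the present proposition.
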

The proof of Proposition~\ref{tripod} comprises two steps: First, we show that
the implied geometric structure is characteristic of deterministic objects
called \emph{Steiner forests}. Then, we show that, with high $\FKff(\,\cdot\,|\,
\Omega_{\underline{\bbG}})$-probability, the cluster $\sfC_\bbG$ sits in the
vicinity of one such forest.

\subsection{Steiner forests}
\label{sub:Steiner}
{Note that 
for every $m$ the
set $\calK_m$ of all compact subsets of $\Lambda_m$ is a Polish space with
respect to the $\dd_\tau$-distance. } 

We now recall the concept of Steiner forest. Consider $E\subseteq
\partial\Lambda_m$ with $|E|\le M$. Let $\underline E=(E_1,\dots,E_i)$ be a
partition of $E$ and $\Omega_{\underline E}$ be the set of compact subsets of
$\bbR^2$ such that $E_j$ is included in one of their connected components for
every $j\in\{1,\dots,i\}$. For the trivial partition $\underline{E} = \lbr
E\rbr$, we shall write $\Omega_{E}$.
\bigbreak
For a compact $\calS \subset \bbR^2$, let $\tau (\calS )$ be the
(one-dimensional) Hausdorff measure of $\calS$ in the $\tau$-norm. Explicitly,
\begin{equation}
\label{eq:Functional-tau}
\tau (\calS)
=
\lim_{\varepsilon\to 0}\,\inf\Bigl\{ \sum {\rm diam}_\tau (A_i )\,:\, \calS
\subseteq\cup A_i,\, {\rm diam}_\tau (A_i )\leq \varepsilon\Bigr\},
\end{equation}
where ${\rm diam}_\tau(A)=\sup\{ \tau(x-y) : x,y\in A\}$. Define the set of
\emph{Steiner forests} by
\[
\Omega^{\rm min}_{\underline E}
=
\bigl\{ \calF\in\Omega_{\underline E} \,:\, \tau (\calF )
=
\min_{\calS\in\Omega_{\underline E}} \tau (\calS )\bigr\} .
\]
We set
\[
\tau_{\underline E}
=
\min_{\calS\in\Omega_{\underline E}} \tau (\calS)
=
\tau(\calF) ,
\]
for any Steiner forest $\calF\in \Omega^{\rm min}_{\underline E}$.

In the sequel we shall work only with Steiner forests 
$\calF\in \Omega^{\rm min}_{\underline E}$, when $\underline{E}$ is a 
partition of a set $E\subset \partial\Lambda_m$ of cardinality 
$\abs{E}\leq M$. Let $\Omega^{\rm min}_{M, m}$ be  the collection of all such 
forests. 

\begin{figure}
\begin{center}
\includegraphics[width=0.30\textwidth]{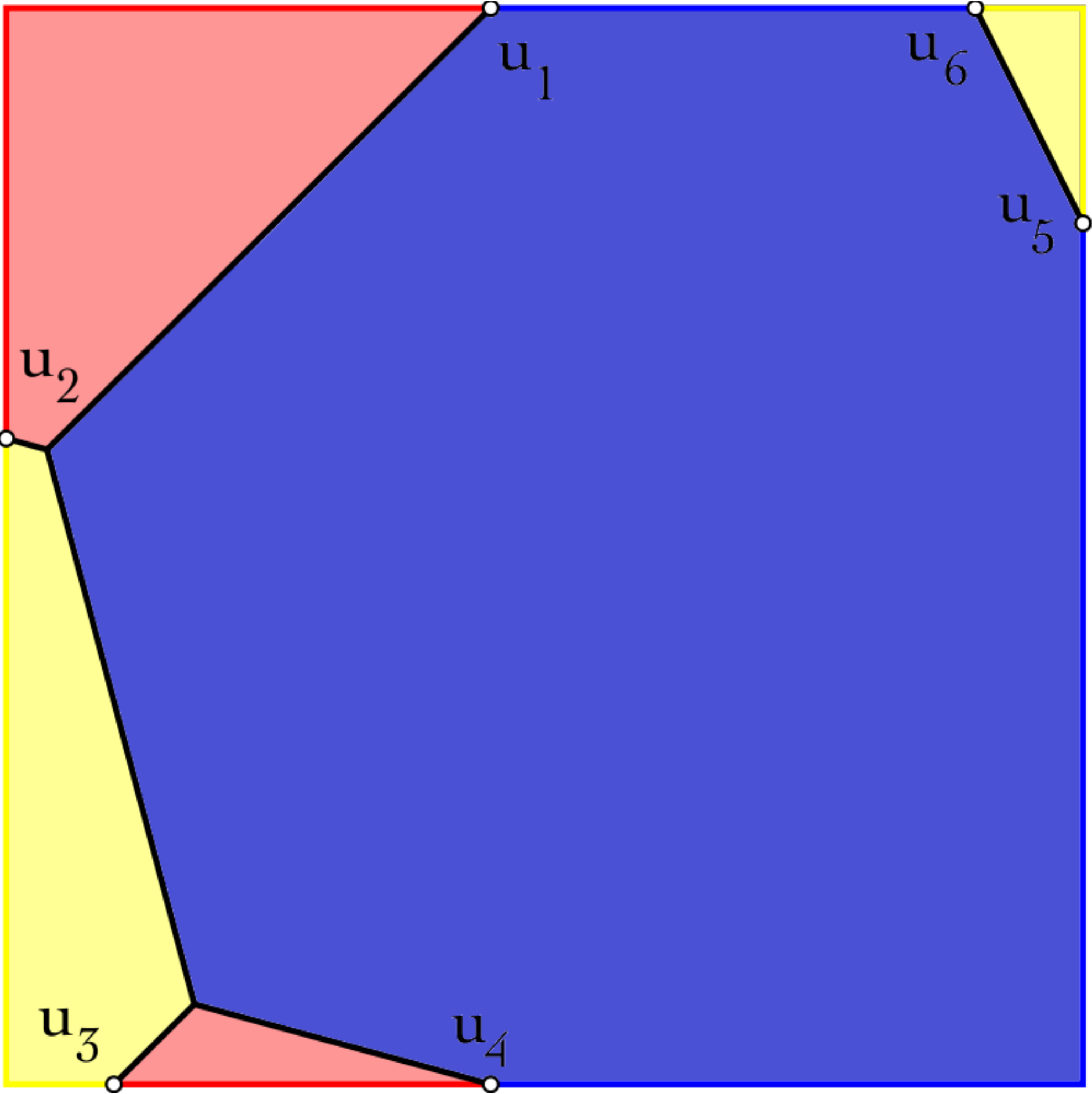}
\end{center}
\caption{A non trivial Steiner forest
with a partition $\underline{E} = (E_1, E_2)$ with $E_1 = \lbr u_1, 
\dots, u_4\rbr$ and $E_2 = \lbr u_5, u_6\rbr$.}
\label{fig:steiner_forest}
\end{figure}
\noindent
\begin{proposition}
\label{prop:Steiner}
Fix $M>0$.  The following properties hold uniformly in $m$, in finite subsets
$E\subseteq \partial \Lambda_m$ with $|E|\le M$ and in partitions $\underline E$
of $E$:
\begin{itemize}
\item[{\rm P1.}] \textbf{(Number of Steiner forests {and compactness of
$\Omega^{\rm min}_{M, m}$)}}
There exists $k=k(M)<\infty$ such that $|\Omega^{\rm min}_{\underline E} |\leq k$.
{ The set $\Omega^{\rm min}_{M, m}$ is a compact 
subset of $\lb \calK_m, \dd_\tau \rb$. }
\item[{\rm P2.}] \textbf{(Structure of Steiner forests)}
The sets $\calF\in\Omega^{\rm min}_{\underline E}$ are forests (that is
collections of disjoint trees). Each inner node (that is not belonging to $E$)
of such $\calF$ has degree $3$. Furthermore, there exists an $\eta>0$ such that
the angle between two edges incident to an inner node of $\calF$ is always
larger than $\tfrac\pi2 + \eta$.
\item[{\rm P3.}] \textbf{(Well separateness of trees)}
There exists $\delta_1=\delta_1(M) > 0$ such that  any $\calF\in\Omega^{\rm
min}_{\underline E}$ satisfies:
\begin{itemize}
\item[{\rm (a)}] for any Steiner tree $\calT\in\calF$, two different nodes  of
$\calT$ in $\Lambda_{m/2}$ are at $\dd_\tau$-distance at least $\delta_1 m$ of
each other;
\item[{\rm (b)}] if $\calT_1$ and $\calT_2$ are two disjoint trees of $\calF$,
then $\dd_\tau\lb \calT_1\cap \Lambda_{m/2} ,  \calT_2\cap \Lambda_{m/2}\rb 
\geq \delta_1 m$ .
\end{itemize}
\item[{\rm P4.}] \textbf{(Stability)}
For any $\delta_2 > 0$, there exists $\kappa_2=\kappa_2(\delta_2,M) > 0$ such
that,
for any $\abs{E}\leq M$, any partition $\underline{E}$ of $E$ and 
any $\calS\in \Omega_{\underline E}$,
\begin{equation}\label{eq:tau-distance}
\tau (\calS)
\leq
\tau_{\underline E} + \kappa_2 m\quad\text{implies}\quad
\min_{\calF\in\Omega^{\rm min}_{M, m}}{\rm d}_\tau (\calS, \calF)< \delta_2 m .
\end{equation}
\end{itemize}
\end{proposition}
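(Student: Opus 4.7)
The plan is to base the four properties on three ingredients: that $\tau$ is a strictly convex norm (so $\tau$-geodesics are straight segments), the sharp triangle inequality \eqref{eq:sharp-ti} which provides the only quantitative input, and the equivalence between $\tau$ and the Euclidean norm. Throughout, uniformity comes from combining \eqref{eq:sharp-ti} with the finite-combinatorics bound $|E|\le M$.

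For P1 and P2, I would run the classical Steiner-tree surgery arguments inside the norm $\tau$. Since $\tau$-geodesics are straight segments, every $\calF\in\Omega^{\rm min}_{\underline E}$ is a union of straight edges meeting at a finite number of nodes. A degree-$2$ inner node is straightened away, and \eqref{eq:sharp-ti} turns the usual Steiner splitting of a degree-$\ge 4$ node into two trivalent nodes into a strict length reduction; thus each inner node has degree exactly $3$. At such a node a first-order variation yields a balance condition on the three dual unit vectors; if two incident edges made a Euclidean angle $\le \tfrac\pi2+o(1)$, shortcutting the node by a small displacement along the third direction would, via \eqref{eq:sharp-ti}, reduce the $\tau$-length by an amount proportional to the uniform constant $\rho$. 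This produces a uniform $\eta>0$, giving P2. For P1, each tree of $\calF$ has at most $M$ leaves and hence at most $M-2$ internal nodes, so the number of possible Steiner topologies is bounded by a function $k(M)$; for each fixed topology, the $\tau$-length is a strictly convex function of the positions of the internal nodes, yielding a unique geometric minimizer. Hence $|\Omega^{\rm min}_{\underline E}|\le k(M)$. Compactness of $\Omega^{\rm min}_{M,m}$ in $(\calK_m,\dd_\tau)$ follows from Blaschke's selection theorem together with the lower semi-continuity of the length functional \eqref{eq:Functional-tau}.

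For P3, suppose the conclusion fails. If two inner nodes, or two disjoint trees, of some $\calF$ had representative points $v,v'\in\Lambda_{m/2}$ within $\tau$-distance $\epp m$, one could perform a short surgery: either replace the detour of the tree between $v$ and $v'$ by a direct $\tau$-geodesic, or bridge the two trees and remove a now-redundant edge in the cycle thus created. In both cases \eqref{eq:sharp-ti} shows that the net $\tau$-length gain is at least $\rho$ times a macroscopic quantity of order $m$, contradicting minimality. Since the number of nodes is at most $3M$, all bounds are uniform in $m$, yielding the scale-$\delta_1 m$ separation.

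The main obstacle, and the main point of the proposition, is P4; I would prove it by contradiction and rescaling. Suppose there are sequences $m_n\to\infty$, $E_n\subset\partial\Lambda_{m_n}$ with $|E_n|\le M$, partitions $\underline{E_n}$, and $\calS_n\in\Omega_{\underline{E_n}}$ with $\tau(\calS_n)-\tau_{\underline{E_n}}=o(m_n)$ but $\dd_\tau(\calS_n,\Omega^{\rm min}_{M,m_n})\ge \delta_2 m_n$. Rescale by $m_n$ so that the problem takes place in $\Lambda_1$. Since $|E_n|\le M$ and the number of partitions of such a set is bounded by the Bell number $B_M$, we may extract a subsequence along which the combinatorial type of $\underline{E_n}$ is constant; Blaschke's selection theorem then yields $\dd_\tau$-limits $E_\infty\subset\partial\Lambda_1$, $\underline{E_\infty}$, $\calS_\infty$, and also a $\dd_\tau$-limit $\calF_\infty$ of accompanying minimizers. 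The possible collision of terminals in the limit is controlled via \eqref{eq:sharp-ti} and the bound $|E_n|\le M$, so that $\calS_\infty\in\Omega_{\underline{E_\infty}}$; lower semi-continuity of \eqref{eq:Functional-tau} then gives
\[
\tau(\calS_\infty)
\le
\liminf_{n\to\infty} \tau(\calS_n)/m_n
\le
\tau_{\underline{E_\infty}},
\]
so $\calS_\infty$ is itself a minimizer, hence lies in $\Omega^{\rm min}_{M,1}$ at $\dd_\tau$-distance zero from this set, contradicting $\dd_\tau(\calS_\infty,\Omega^{\rm min}_{M,1})\ge \delta_2$. The delicate technical point to handle is the possibility that in the limit several terminals merge or that the topology of the minimizer changes along the subsequence; this is exactly where the uniform finiteness bound $|E|\le M$ and the uniform separation provided by P3 are essential.
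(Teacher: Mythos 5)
Your P4 argument is essentially the paper's: rescale to $\Lambda_1$, use compactness of level sets of $\tau$ together with lower semi-continuity of the length functional \eqref{eq:Functional-tau}, extract convergent subsequences along which $\underline{E_n}$ stabilizes (possibly with collapsing terminals), and conclude that the limit is a minimizer, contradicting the assumed uniform separation. For P1 and P2 the paper simply cites~\cite{Coc67} and~\cite{AlfCon91}; your self-contained topology-count and Steiner-split sketches are reasonable replacements, though the ``strictly convex in internal node positions'' claim needs the observation (used by the paper via~\cite{CamGia09}) that strict convexity only holds away from degenerate/collinear configurations.

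The genuine gap is in P3. Your surgery is not sound as stated. For P3(b), bridging two \emph{disjoint} trees $\calT_1$ and $\calT_2$ creates no cycle (they were disjoint, so there is no pre-existing path), hence there is no ``redundant edge'' to remove, and the operation strictly increases the $\tau$-length rather than decreasing it. For P3(a), ``replacing the detour of the tree between $v$ and $v'$ by a direct geodesic'' disconnects any sub-branches hanging off intermediate nodes along that path, so it is not an admissible competitor unless $v$ and $v'$ are directly joined by a single edge, in which case the replacement is vacuous. What one actually wants for P3(a) is: merge $v$ and $v'$ into a degree-$4$ node (saving the edge length, of order $\epp m$), then perform a genuine Steiner split of that degree-$4$ node into two trivalent nodes, which via \eqref{eq:sharp-ti} recovers an amount proportional to the (macroscopic, order-$m$) distances to the neighboring nodes, yielding a net gain. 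You did not make this quantitative comparison. The paper avoids these surgeries entirely and instead proves P3 by a compactness/contradiction argument: if nodes accumulate, pass to a limiting Steiner tree $\calT^*$ (using the already-established P4), show that the limit point has degree $\ell+2>3$ by counting intersections with a small sphere around the collision point, and derive a contradiction with the degree-$3$ statement of P2; for P3(b) the paper uses confinement of minimal trees to the convex hulls of their terminals and the angle bound of P2 to separate the trees. If you want to keep a surgery approach, you must (i) replace the flawed detour/bridge operations by the merge-then-split operation, and (ii) quantify the resulting gain uniformly in $m$ and in $|E|\le M$.
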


\begin{proof}
We shall be rather sketchy since the arguments are presumably well understood.
We shall consider the case $m=1$ (the general case follows by homogeneity).  
\smallskip

Let us start with P4.
The functional $\tau$ in~\eqref{eq:Functional-tau} is lower semi-continuous on
$(\calK_1,\dd_\tau)$ and has compact level sets (meaning sets of the form
$\{\calS:\tau(\calS)\le R\}$). 
See, for instance, \cite[Proposition~3.1]{Co05}, where these facts are
explained for the inverse correlation length of sub-critical Bernoulli bond 
percolation. 

Assume that P4 is wrong. Then there exists $\delta  >0$ and  two sequences;  
$\underline{E_k}$ and $\calS_k\in \Omega_{\underline{E_k}}$, such that 
\[
\tau\lb \calS_j\rb < \tau_{\underline{E_k}}+\frac{1}{k}\quad \text{but}\quad 
\min_{\calF\in\Omega^{\rm min}_{M, m}}{\rm d}_\tau (\calS_j,\calF) > \delta .
\]
Since $\abs{E_k}\leq M$, the sequence $\tau_{\underline{E_k}}$ is bounded. 
Hence $\lbr \calS_j\rbr$ is precompact. Possibly passing to subsequence we may assume 
that $\underline{E_k}$ converges to $\underline{E}$ (points might 
collapse, but this is irrelevant since this preserves $\abs{E}\leq M$), 
and that $\calS_k$ converges to 
$\calS\in \Omega_{\underline{E}}$. Both convergence are, of course, in the 
sense of Hausdorff distance. By minimality it is evident that 
$\tau_{\underline E} = \lim\tau_{\underline{E_k}}$. 
By lower-semicontinuity $\tau (\calS )\leq \liminf \tau (\calS_k )$. 
Which means that $\calS\in \Omega_{\underline{E}}^{\rm min}$.
A contradiction.

A proof of {the first assertion} of  P1 can be found in~\cite[Theorem 1]{Coc67}.
Compactness of $\Omega_{M,1}^{\rm min}$ follows from 
compactness of level sets of $\tau$ and 
the fact that if 
$\calF_k\in \Omega_{\underline{E_k}}^{\rm min}$ converges to 
$\calF\in\Omega_{\underline{E}}$, then, as was already 
mentioned above, $\tau_{\underline E} = \lim\tau_{\underline{E_k}}$, and hence, 
by the lower-semicontinuity of $\tau$, 
$\calF\in \Omega_{\underline{E}}^{\rm min}$.

A proof of P2 can be found in~\cite{AlfCon91}.

\medbreak

Let us turn to the proof of P3.
For trivial partitions, Steiner forests are trees. Now, assume that there exists
a sequence of Steiner trees $\calT_k\in\Omega^{\rm min}_{{ E}_k}$ such that
$\calT_k$ contains at least two inner nodes in $\Lambda_{1/2}$ at distance less
or equal $\frac1k$. There is no loss of generality to assume that the sequence
$\calT_k$ converges to some $\calT^*$ . As it follows from P4,
$\calT^*\in\Omega^{\rm min}_{{E}^*}$, where ${E}^*$ is the corresponding limit
of ${E}_k$. Obviously, $|E^*|$ is still less or equal to $M$, since boundary
points can only collapse under the limiting procedure.

The total number of nodes of each of $\calT_k$ is uniformly bounded above. Hence
by our assumption we can choose  a number $\ell\geq 2$, a point
$x\in\Lambda_{1/2}$, a radius  $\varepsilon>0$ and a sequence $\nu (k )\to 0$,
so that \\
(a) each of $\calT_k$ contains $\ell$ nodes in $\Lambda_{\nu(k)}
(x)=x+\Lambda_{\nu(k)}$; \\
(b) none of $\calT_k$ contains nodes in the annulus  $A_{\nu (k),
\varepsilon}(x)$.
\\
Then the restriction of $\calT_k$ to $\Lambda_{\varepsilon} (x)$ is a Steiner
tree, whereas the cardinality of the intersection ${|\partial
\Lambda_{\varepsilon} (x)\cap \calT_k | = \ell+2}$. By the minimality of
$\calT_k$ the points of $\partial \Lambda_{\varepsilon} (x)\cap \calT_k $ are
uniformly separated. Consequently, $|\partial \Lambda_{\varepsilon} (x)\cap
\calT^* | = \ell+2 >3$. We infer that the {degree} of $x$ in  the Steiner tree
$\calT^*$ is $\ell +2 >3$, which is impossible by P2. This proves P3(a).

Consider now two disjoint Steiner trees $\calT_1\in \Omega^{\rm min}_{{ E}_1}$
and $\calT_2 \in \Omega^{\rm min}_{{ E}_2}$, such that the forest $\lbr\calT_1
,\calT_2\rbr$ belongs to $\Omega^{\rm min}_{\lbr { E}_1 , E_2 \rbr}$. By the
strict convexity of $\tau$, the trees are confined to their convex envelopes:
$\calT_i\in {\rm co}\lb E_i \rb$ for $i=1,2$. Thus if both trees are disjoint
and intersect $\Lambda_{1/2}$, it follows that ${\rm co}\lb E_1\rb \cap {\rm
co}\lb E_2\rb = \emptyset $. Consequently, there exist $u_1, v_1\in E_1$ and
$u_2, v_2\in E_2$, such that $\calT_1$ lies below the interval $[u_1 ,v_1]$  and
$\calT_2$ lies above the interval $[u_2 ,v_2]$ (notions of above and below are
with respect to the directions of normals). We are now facing two cases:
\begin{itemize}
\item $\calT_1$ or $\calT_2$ has an inner node in $\Lambda_{2/3}$. By P2, inner
nodes are of degree three and angles between edges incident to inner nodes are
at most $\pi -2\eta$. This pushes inner nodes of $\calT_i$ away from $[u_i ,v_i
]$ uniformly in $\calT_1$ and $\calT_2$. In such a case, P3 is satisfied.
\item  Both  $\calT_1$ and $\calT_2$ do not contain nodes in $\Lambda_{2/3}$,
but each contains an edge which crosses $\Lambda_{1/2}$. Having such edges close
to each other (and hence running essentially in parallel across $\Lambda_{1/2}$)
is easily seen to be incompatible with the minimality of $\calF$.
\end{itemize}
This achieves the proof of P3(b).
\end{proof}

\subsection{Forest skeleton  of the cluster $\sfC_\bbG$}

Let $\ubbG$ be a partition of $\bbG$.  We now aim to show that, under $\FKff
(\,\cdot\,|\, \Omega_{\ubbG} )$, the cluster $\sfC_\bbG$ stays typically close
to one of the Steiner forests from {$\Omega_{M, m}^{\rm min}$}. In order to do
that, we introduce the notion of forest skeleton of the cluster. This notion is
a  modification of the coarse-graining procedure developed in Section~2.2
of~\cite{CamIofVel08}.
\medbreak
Let $\U$ be the unit ball in $\tau$-norm. Fix a large number $c >0$  and
consider $K$ such that $c\log K<K$. For any $y\in\Z^2$, set
\[
\B_K (y)\, = \, \lb y+ K\cdot\U\rb\cap\Z^2  \quad\text{and}\quad
\BB_{K } (y)  \, =\,   \B_{K +c\log K}  (y).
\]
If $x\in A\subset \Z^2$ and $y\in A\cup\partial^{{{\rm ext}}} A$, we shall
use
$\{x\slra{A}y\}$ to denote the event that $x$ and $y$ are connected by an open
path from $x$ to $y$ whose vertices belong to $A$, with the possible exception
of the terminal point $y$ itself.
\medbreak
Let us construct the forest skeleton $\calF_{K}$ of the cluster $\sfC_\bbG$ (see
Figure~\ref{fig:coarse_graining}). Here and below, vertices in $\Z^2$ are
ordered using the lexicographical ordering. In the following construction, we
will often refer to the \emph{minimal} vertex having some property.
\bigbreak
\noindent
\step{1}. Set $r=1, i=1$. Set $x_0^1=u_{i_1}$ be the minimal vertex of $\bbG$.
Set $V = \lbr x_0^{1}\rbr$ and $\calC = \BB_K(x_0^1)$. Go to~\step{2}.
\bigbreak
\noindent
\step{2}. If there exists $x\in V$ and $u\in\bbG\setminus V$ such that
$u\in\BB_{2K} (x)$, then choose $u^*\in\bbG\setminus V$ to be the minimal such
vertex. Set $x^r_i = u^* ,{A_i^r = \B_K (x_i^r )}$  
and go to \step{3}. Otherwise, go to~\step{4}.
\bigbreak
\noindent
\step{3}. Update $V\rightarrow V\cup \{x_i^r\}$,  $\calC \to \calC \cup
\BB_K(x_i^r)$ and  $i \to i+1$. Go to~\step{2}.
\bigbreak
\noindent
\step{4}.
If there is at least one vertex $y\in\partial^{{\rm ext}} \calC$ such that
\[
y \slra{\sfC_\bbG\setminus\calC} \partial^{{\rm ext}} \B_K(y)
\setminus\calC ,
\]
then choose  $y^*$ to be the minimal such vertex, set $x_i^r =y^*, 
A_i^r = \B_K (x_i^r )\setminus \calC$,  and go to~\step{3}. Otherwise, go
to~\step{5}.
\bigbreak
\noindent
\step{5}. If $\bbG \subset V$, then terminate the construction. Otherwise,
choose $u^*$ to be the minimal vertex of $\bbG\setminus V$. Update $r\to r+1$
and set $x_0^r = u^*$. Update $V\to V\cup\lbr x_0^r\rbr$ and $i=1$. Go
to~\step{2}.
\bigbreak

\begin{definition} 
The above procedure produces $r$ disjoint sets of vertices $V^1 = \lbr x_0^1,
x_1^1, \dots\rbr$, $V^2 = \lbr x_0^2, x_1^2, \dots\rbr$, $\dots$, $V^r = \lbr
x_0^r, x_1^r, \dots\rbr$. The vertices $x_i^j$ constructed on~\step{4} 
are equipped with sets $A_i^j$, $j=1\ldots r$. Exit paths through 
such $A_i^j$-s contribute multiplicative factors ${\rm e}^{-K}$ each. 
Sets  $A_i^j$ for vertices $x_i^j$ constructed on \step{2} play no role and are
introduced for notational convenience only (see~\eqref{ar} below). By
construction, there are at most  $M$ such vertices. 

The edges within each group $\ell = 1, \dots , r$ are constructed as follows:
$x_i^\ell$ is connected to the vertex of
\[
\bigl\{ x_j^\ell \,:\,   j<i \text{ and } x_i\in \BB_{2K}(x_j)\bigr\}
\]
which has smallest index $j$. 

This produces a graph which is a set of $r$ trees
$\calT_{K}^1,\ldots,\calT_K^r$.
The union of the trees is called the \emph{forest skeleton} $\calF_K=\cup_\ell
\calT_K^\ell$.
\end{definition}
Note that we consider these graphs as compact subsets of $\bbR^2$.
An example of forest squeleton is drawn on Figure~\ref{fig:coarse_graining}.
The following result follows trivially from the construction of the forest
skeleton.
\begin{proposition}\label{prop:basic skeleton}
Let $\calF_K$ be the forest skeleton of $\sfC_\bbG$, then
\begin{enumerate}
\item $\bbG$ is included in the vertices of $\calF_K$.
\item Two vertices $u,v\in\bbG$ which were connected in $\sfC_\bbG$ are also
connected in $\calF_K$.
\item $\sfC_{\bbG}\subseteq \cup_{\ell,i} \BB_{2K} (x_i^\ell )$.
\end{enumerate}
\end{proposition}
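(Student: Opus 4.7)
The plan is to verify each of the three properties directly from the algorithmic construction. Each claim corresponds to a specific exit condition of the algorithm, and the only analytic input is the geometric estimate
\[
\text{``}y\in\partial^{\rm ext}\calC\text{ and } v\in\B_K(y) \ \Longrightarrow\ v\in\BB_{2K}(x)\ \text{for some}\ x\in V\text{''},
\]
which boils down to $2K+c\log K+O(1)\le 2K+c\log 2K$ and holds for $c$ large enough. The main (mild) technical point is to freeze the algorithm at the right moment---namely, right when Step 4 fails for the current tree---and to handle the seeds $x_0^r$ for $r\ge 2$ for which $\BB_K(x_0^r)$ is not explicitly added to $\calC$ by Step 5. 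For the proof it is convenient to augment $\calC$ by these seed balls; this only strengthens every conclusion we draw, since Step 5 already adds each seed to $V$.

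\textbf{Property 1.} The algorithm terminates only when the test $\bbG\subset V$ at the head of Step 5 succeeds, and by definition $V=V^1\cup\cdots\cup V^r$ is the vertex set of $\calF_K$. Termination itself is immediate: every pass through Step 2 strictly reduces $|\bbG\setminus V|$, every pass through Step 4 strictly enlarges the finite set $\calC\subset\Lambda_n$, and Step 5 can fire at most $|\bbG|\le M$ times.

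\textbf{Property 2.} Fix $u\in V^\ell$ and suppose, for contradiction, that some $v\in\bbG$ is connected to $u$ in $\sfC_\bbG$ but $v\notin V^\ell$. Freeze the algorithm at the moment tree $\ell$ is declared complete, i.e., right before the jump from Step 4 to Step 5 for that tree. Both Step 2 and Step 4 have just failed. Failure of Step 2 gives $v\notin\BB_{2K}(x)$ for every $x\in V$, and \emph{a fortiori} $v\notin\calC$. Take any open path $\pi$ in $\sfC_\bbG$ from $u\in\calC$ to $v\notin\calC$, and let $y$ be the vertex of $\pi$ immediately following its \emph{last} exit from $\calC$. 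Then $y\in\partial^{\rm ext}\calC\cap\sfC_\bbG$ and the subpath of $\pi$ from $y$ to $v$ lies entirely in $\sfC_\bbG\setminus\calC$. Failure of Step 4 at $y$ forces this subpath to remain inside $\B_K(y)$, hence $v\in\B_K(y)$, and the geometric estimate then places $v$ in $\BB_{2K}(x)$ for some $x\in V$, contradicting Step 2's failure.

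\textbf{Property 3.} Let $w\in\sfC_\bbG$. If $w\in\calC$, then $w$ lies in one of the balls $\BB_K(x_i^\ell)\subset\BB_{2K}(x_i^\ell)$ constituting $\calC$, and we are done. Otherwise, since $w$ is connected in $\sfC_\bbG$ to some vertex of $\bbG\subset V\subset\calC$, we walk along a connecting open path from $w$ until its first entry into $\calC$, reaching a vertex $y\in\partial^{\rm ext}\calC\cap\sfC_\bbG$ along a subpath entirely in $\sfC_\bbG\setminus\calC$. Failure of Step 4 at termination confines this subpath to $\B_K(y)$, so $w\in\B_K(y)$, and the geometric estimate again yields $w\in\BB_{2K}(x)$ for some $x\in V$.
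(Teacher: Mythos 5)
Your overall approach---reading each claim as a corollary of the failure of the corresponding exit test, together with the geometric estimate turning $\B_K(y)$-confinement for $y\in\partial^{\rm ext}\calC$ into $\BB_{2K}(x)$-membership for some $x\in V$---is exactly the ``trivial from the construction'' argument the paper intends. However, there are two places where the reasoning as written has a gap. First, the seed-ball issue. You correctly note that Step~5 does not update $\calC$, but the fix you propose (``augment $\calC$ at analysis time; this only strengthens every conclusion'') does not quite work: the \emph{failure of Step~4}, which is what powers Properties~2 and~3, is a statement about the algorithm's actual $\calC$, and it does not transfer when $\calC$ is enlarged after the fact, because $\partial^{\rm ext}\calC$ changes and the connection event is not monotone in $\calC$. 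Indeed, if the algorithm literally never adds $\BB_K(x_0^r)$ to $\calC$ for $r\ge 2$, then when $\sfC_\bbG$ has a component lying far from tree~$1$, the later seeds are never explored and Property~3 is simply false for the skeleton as produced. The correct reading is that Step~5 should, consistently with Steps~1 and~3, also update $\calC\to\calC\cup\BB_K(x_0^r)$, so that $\calC=\bigcup_{x\in V}\BB_K(x)$ is maintained as an invariant of the algorithm; with that convention your argument goes through verbatim.

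Second, in your proof of Property~2 the inference ``failure of Step~2 gives $v\notin\BB_{2K}(x)$ for every $x\in V$'' requires $v\in\bbG\setminus V$ at the freeze time, and this fails if $v$ was absorbed into an earlier tree $V^{\ell'}$ with $\ell'<\ell$. The remedy is routine: freeze at the first moment one of $u,v$ lies in a completed tree and relabel so that this vertex is $u$; then the other is still outside $V$, Step~2's failure does apply to it, and the contradiction closes.
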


\begin{figure}
\begin{center}
\includegraphics[width=0.65\textwidth]{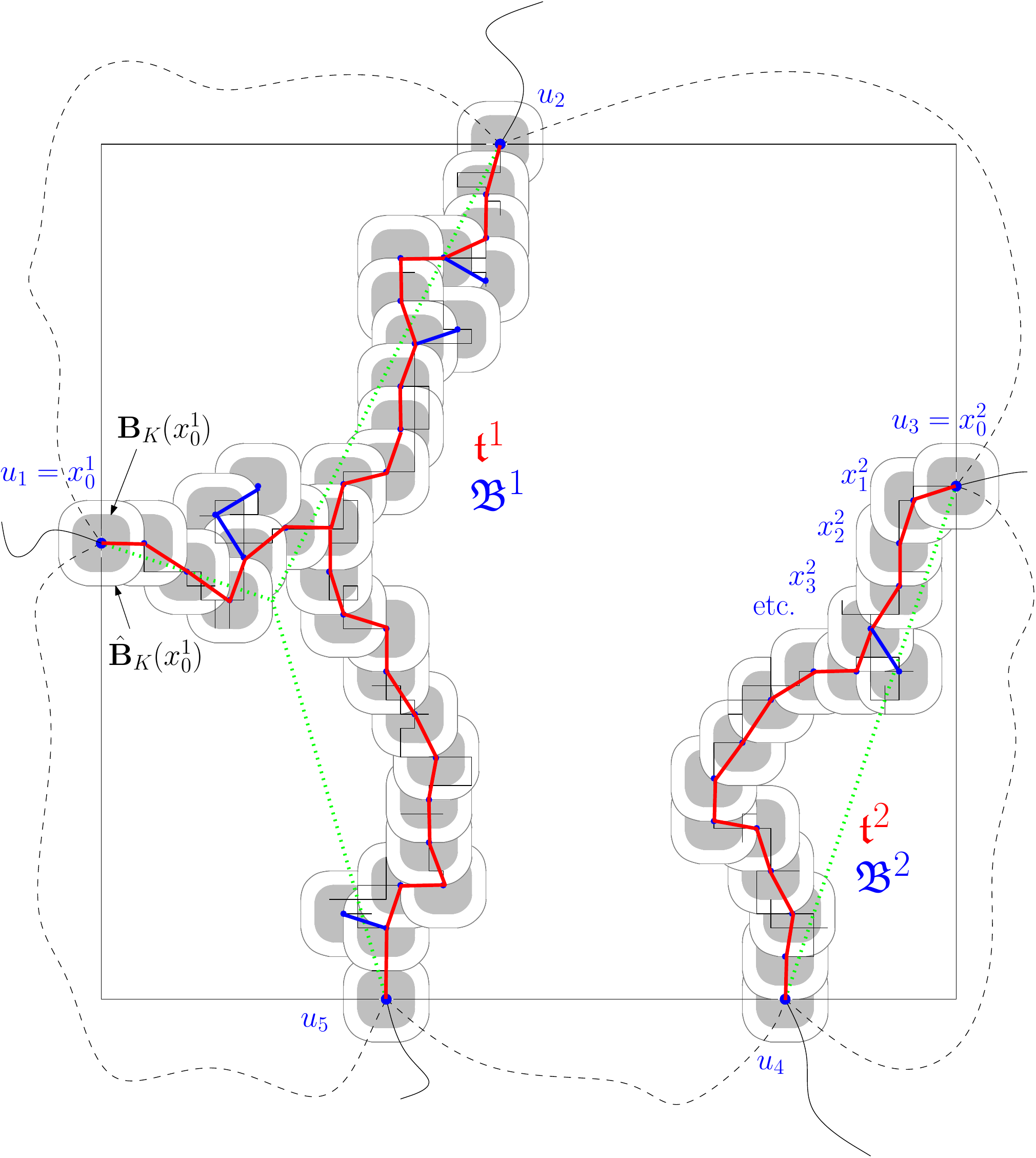}
\end{center}
\caption{{Construction of the forest skeleton
$\calF_{K}=\{\calT_K^1,\calT_K^2\}$ of the cluster $\sfC_\bbG$ (in black),
consisting of the trees $\calT_K^i=\{\frt^i,\frB^i\}$, $i=1,2.$ The Steiner
forest corresponding to the partition $\underline{\bbG}=(\{u_1,u_2,u_5\},\{u_3,u_4\})$ is
drawn in dashed green.}}
\label{fig:coarse_graining}
\end{figure}

\subsection{Distance between $\sfC_\bbG$ and Steiner forests}

\begin{proposition}\label{upper-away}
For every $\delta_3>0$, there exists $\kappa_3=\kappa_3(M)>0$ such that for $n$
large enough,
\[
\mu_{\calD}^\sff \Bigl( \min_{\calF\in\Omega^{\rm min}_{M,m }}{\rm d}_\tau
\bigl(\sfC_\bbG, \calF \bigr) > \delta_3 n
\Bigm| \Omega_{{\ubbG}} \Bigr)
\leq
{\rm e}^{- \kappa_3 n },
\]
uniformly in $(\calD,\ubbG)$ with $|\bbG|\le M$.
\end{proposition}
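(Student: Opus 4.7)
The strategy is to use the forest skeleton $\calF_K$ as a coarse-grained replacement for $\sfC_\bbG$, and to combine Ornstein--Zernike type upper bounds on skeleton probabilities with the stability property P4 of Proposition~\ref{prop:Steiner}. Choose $K=K(\delta_3,M)$ large but independent of $n$, and small enough that $CK\leq \delta_3 n/2$ for all $n$ large; by Proposition~\ref{prop:basic skeleton}(3), $\sfC_\bbG$ is contained in the $CK$-neighborhood of $\calF_K$, so if $\dd_\tau(\sfC_\bbG,\calF)>\delta_3 n$ for every $\calF\in\Omega^{\rm min}_{M,m}$, the same is true for $\calF_K$ with $\delta_3/2$ in place of $\delta_3$. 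Applying P4 with $\delta_2=\delta_3/2$ produces a constant $\kappa_2=\kappa_2(\delta_3,M)>0$ such that the event in question forces
\[
\tau(\calF_K)\ \ge\ \tau_{\underline{\bbG}}+\kappa_2 m\ \ge\ \tau_{\underline{\bbG}}+\tfrac{\kappa_2}{3}n.
\]

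Next, one needs matching bounds on the numerator and denominator of the conditional probability. For the denominator, a standard corridor construction around a minimizer $\calF^*\in\Omega^{\rm min}_{\underline{\bbG}}$ (building the required $\bbG$-connections by concatenating paths that stay in a thickened neighborhood of $\calF^*$ and using finite-energy together with exponential decay) yields
\[
\FKff(\Omega_{\underline{\bbG}})\ \ge\ \mathrm{e}^{-\tau_{\underline{\bbG}}-o(n)},
\]
uniformly in $(\calD,\underline{\bbG})$ with $|\bbG|\le M$. For the numerator, one controls the probability of realizing a given skeleton: the BK inequality disentangles the required open connections between successive skeleton vertices $x_j^\ell\to x_i^\ell$ and through the exit sets $A_i^\ell$, and each such ingredient is estimated via the sub-critical two-point bound combined with the Ornstein--Zernike machinery of \cite{CamIofVel08}. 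This gives, for any admissible skeleton $\calF_K$,
\[
\FKff(\calF_K \text{ is the skeleton of }\sfC_\bbG)\ \le\ \mathrm{e}^{-(1-\varepsilon_K)\,\tau(\calF_K)},
\]
with $\varepsilon_K\to 0$ as $K\to\infty$.

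Finally, one enumerates skeletons. Since consecutive skeleton vertices are separated by a $\tau$-distance of order $K$, a skeleton of total $\tau$-length $L$ has $O(L/K)$ vertices, and the number of such skeletons (including the choice of how to join them into at most $M$ trees rooted at elements of $\bbG$) is bounded by $\mathrm{e}^{c(L/K)\log K}$ for some absolute $c$. Dividing the union bound by the lower bound on $\FKff(\Omega_{\underline{\bbG}})$ and summing over $L\ge \tau_{\underline{\bbG}}+\tfrac{\kappa_2}{3}n$ yields
\[
\FKff\Bigl(\min_{\calF\in\Omega^{\rm min}_{M,m}}\dd_\tau(\sfC_\bbG,\calF)>\delta_3 n\ \Bigm|\ \Omega_{\underline{\bbG}}\Bigr)\ \le\ \mathrm{e}^{-\kappa_3 n},
\]
provided $K$ is chosen large enough that $\varepsilon_K\tau_{\underline{\bbG}}+c(L/K)\log K\ll \tfrac{\kappa_2}{3}n$; here $\kappa_3=\kappa_3(\delta_3,M)>0$. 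The main obstacle is the Ornstein--Zernike upper bound on the skeleton probability, which requires extending the irreducible-decomposition/BK analysis of \cite{CamIofVel08} from two-point connections to the branching (tree) structure and to the restricted FK measure on the flower domain $\calD$; exponential decay with free boundary conditions together with the mixing estimate \eqref{eq:ratio-mixing} will allow us to reduce to the infinite-volume sub-critical setting where the OZ theory applies directly.
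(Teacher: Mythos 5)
Your proposal follows essentially the same architecture as the paper's proof: coarse-grain $\sfC_\bbG$ by its forest skeleton $\calF_K$, invoke P4 to translate the distance-from-$\Omega^{\rm min}_{M,m}$ event into a surplus of $\tau$-length, lower-bound the denominator $\FKff(\Omega_{\ubbG})$ by opening an approximate Steiner forest, upper-bound the skeleton probabilities with a rate close to $\mathrm{e}^{-\tau(\calF_K)}$, count skeletons by an entropy factor $\mathrm{e}^{O((L/K)\log K)}$, and sum.

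One step in your plan, however, invokes the wrong tool and would not go through as stated. You propose to ``disentangle'' the skeleton connection events $\{x_i^\ell \leftrightarrow \partial^{\rm ext}\B_K(x_i^\ell) \text{ in } A_i^\ell\}$ via the BK inequality. The BK inequality is a statement about product (Bernoulli) measures; it fails in general for FK random-cluster measures with $q\neq 1$, which is precisely the setting here. The paper instead exploits the fact that, by construction, the exit sets $A_i^\ell$ are pairwise disjoint: it writes the probability of the intersection as an iterated product of conditional probabilities and then uses FKG monotonicity to dominate each conditional probability by the probability under wired boundary conditions on the relevant annular region, i.e.\ $\mu^\sfw_{\B_K(x)}\bigl(x\leftrightarrow\partial^{\rm ext}\B_K(x)\bigr)=\mathrm{e}^{-K(1-o_K(1))}$. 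The conclusion you want does hold, but you need FKG and the disjoint-support structure of the skeleton, not BK. Separately, the paper's numerator bound also carries an extra factor $\mathrm{e}^{-2KM}$ to account for the at most $M$ skeleton vertices that belong to $\bbG$ (which lack an independent exit cost); this factor is harmless but should not be omitted. With the BK step replaced by the conditional-FKG argument, your proof matches the paper's.
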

\begin{proof}
Let $\calF_K$ be the forest skeleton of $\sfC_\bbG$ at scale $K$ ($K$ will
be chosen later). By the third item of Proposition~\ref{prop:basic skeleton},
\[
\mathrm{d}_\tau\lb \sfC_\bbG , \calF_K\rb \leq 2K +c\log 2K.
\]
The proposition thus reduces to the following claim: for any $\delta_3>0$, there
exist $K=K(M)>0$ and $\kappa_3=\kappa_3(M)>0$ such that
\[
\mu_{\calD}^\sff \Bigl(  \min_{\calF\in\Omega^{\rm min}_{M,m }}
{\rm d}_\tau \bigl(\calF_{K}, \calF \bigr) > \delta_3 n
\Bigm| \Omega_{{\ubbG}} \Bigr)
\leq
{\rm e}^{- \kappa_3 n },
\]
uniformly in $(\calD,\ubbG)$ with $|\bbG|\le M$. We now prove this statement.
\bigbreak

Writing $E:=\{ \min_{\calF\in\Omega^{\rm min}_{M,m }} {\rm d}_\tau
\bigl(\calF_{K}, \calF \bigr) > \delta_3 n\}$, we have
\begin{align}
\mu_{\calD}^\sff (E\vert \Omega_{{\ubbG}} ) 
=
\frac{\mu_{\calD}^\sff (E \cap \Omega_{{\ubbG}} )}{\mu_{\calD}^\sff (
\Omega_{{\ubbG}} )}
\leq
\frac{\mu_{\calD}^\sff( \tau (\calF_{K }) \geq \tau_{{\bbG}} +\kappa_2 n)}
{\mu_{\calD}^\sff ( \Omega_{{\ubbG}} )}
\end{align}
where in the last inequality we used Property~P4 of
Proposition~\ref{prop:Steiner}, applied with $\delta_2 =\delta_3$. 

Let $\calF$ be a Steiner forest in $\Omega_{\ubbG}^{\rm min}$ and $\calF'$ be
the forest obtained by replacing each inner node of $\calF$ by the closest
vertex of $\Z^2$. Now, by the FKG inequality, we can lower bound the denominator
\begin{align}\label{lb-mg}
\mu_{\calD}^\sff (\Omega_{{\ubbG}} )
&\geq
\mu_{\calD}^\sff \Bigl(\bigcap_{\{x,y\}\in\calE(\calF')} \{x \leftrightarrow
y\}\Bigr)
\geq
\prod_{\{x,y\}\in\calE(\calF')}  \mu_{\calD}^\sff ( x \leftrightarrow y
)\nonumber\\
&\geq
\prod_{\{x,y\}\in\calE(\calF')}   {\rm e}^{-\tau(y-x)(1+o_{\abs{y-x}}(1))}
=
{\rm e}^{-\tau_{\bbG} (1+o_n(1))}.
\end{align}
where $\lim_{k\to\infty} \smo{k}=0$ by definition, and the product is taken
over the set $\calE(\calF')$ of all the inner edges of the approximate Steiner
forest $\calF'$.
\smallskip 

To obtain an upper bound on the numerator, we follow~\cite[Section
2]{CamIofVel08}. Let $\vert V(\calF_{{K}})\vert
=\sum_{\ell=1}^r \vert V^\ell\vert$ be
the total number of vertices of the forest skeleton $\calF$, then

\begin{align}\label{ar}
{{\rm e}^{-2KM}}\mu_{\calD}^\sff (\calF_{K}=\calF)
&\leq
 \mu_\calD^\sff\Bigl(\bigcap_{\ell=1}^r\bigcap_{i=0}^{\vert V^\ell\vert}
x_i^\ell
\stackrel{{A_i^\ell}}{\leftrightarrow}\partial^{{{\rm
ext}}}\B_K(x_i^\ell)\Bigr)
\leq
\prod_{\ell=1}^r\prod_{i=1}^{\vert V^\ell\vert} \mu_{\hat A_K^i}^\sfw\bigl(
x_i^\ell \stackrel{{A_i^\ell}}{\leftrightarrow}\partial^{{{\rm ext}}}
\B_K(x_i^\ell) \bigr)\nonumber
\\
&\leq
\bigl({\rm e}^{-K(1-\smo{K})}\bigr)^{\sum_{\ell=1}^r \vert V^\ell\vert}
=
{\rm e}^{-K\vert V(\calF)\vert (1-\smo{K})}\nonumber\\
&\leq
{\rm e}^{-\tau(\calF)(1-\smo{K}-\smo{n})},
\end{align}
where in the first inequality
{the term ${\rm e}^{-2MK}$ compensates (by the FKG inequality) the
inclusion
of events $x_i^\ell
\stackrel{{A_i^\ell}}{\leftrightarrow}\partial^{{{\rm
ext}}}\B_K(x_i^\ell)$ 
for points $x_i^\ell\in \bbG$, whereas in the second inequality}   
we expand the probability of the intersection as a
product of conditional expectations and then use the FKG inequality to compare
this conditional expectations with the probability with wired boundary
conditions, and in the second line we use that
$\mu_{\B_K(x)}^\sfw(x\leftrightarrow \partial^{{{\rm ext}}}
\B_K(x))={\rm e}^{-K(1-\smo{K})}$ (this
follows  from \cite[{Corollary 1.1}]{CamIofVel08}, which is now known
to be valid up to $p_c(q)$ thanks to Proposition~\ref{reference subcritical}).
If we now upper bound crudely the number of forest $K$-skeletons
rooted at $\bbG$ with $\tau(\calF)=T$ (and so with less than $C_1 T/K$ vertices)
by $(C_2 K)^{C_3 T/K}$, we get
\begin{align}
\nonumber
\mu_{\calD}^\sff( \tau (\calF_{K }) \geq \tau_{{\bbG}} +\kappa_2 n)
&=
\sum_{\calF : \tau(\calF)\geq \tau_\bbG+\kappa_2n} \mu_\calD^\sff(\calF_K=\calF)
=
\sum_{T\geq \tau_\bbG+\kappa_2n}\sum_{\substack{\calF \;:\\
\nonumber
\tau(\calF)=T}}\mu_\calD^\sff (\calF_K=\calF)\\
\nonumber
&=
\sum_{T\geq \tau_\bbG+\kappa_2n}{\rm e}^{(C_1 T/K)\log(C_2
K)-T(1-\smo{K}-\smo{n})}\\
\label{eq:tree-number}
&\leq
C_4 \cdot {\rm e}^{-(\tau_\bbG+\kappa_2 n)(1+\smo{K}+\smo{n})},
\end{align}
where we used~\eqref{ar} in the second line. The result follows by comparison
with~\eqref{lb-mg}:
\begin{align*}
\mu_\calD^\sff(E\vert\Omega_\bbG)
\leq
{\rm e}^{-(\tau_\bbG+\kappa_2 n)(1-\smo{K}-\smo{n})+\tau_\bbG(1+\smo{n})}
\leq
{\rm e}^{-n\kappa_3}.
\end{align*}
{Note that $\tau_\bbG o_n(1) = o(n)$ since $\tau_{\bbG}=O(n)$. The
latter} follows from the fact that $\tau_\bbG$ is bounded by the
$\tau$-length of the forest obtained by opening all the edges of $\partial E_m$
(recall that $\tau$ is an equivalent norm on $\R^2$).
\end{proof}

\begin{proof} [\textbf{Proof of Proposition~\ref{tripod}}]\ \\
Fix $\calD=\calD_{m,n}$ and $\bbG=\bbG_{m,n}$ with $m\ge \frac n3$ and
$|\bbG|\le M$. Let $\nu>0$. Fix an arbitrary $0<\delta\ll 1$ such that
$\Lambda_{250\delta n}\subset \delta_1 n\mathbf{U}_\tau$, where $\delta_1$ is
given by~P3. By definition of $\delta$, we know that for any forest $\calF \in
\Omega^{\rm min}_{{M}, m}$, $\calF\cap \Lambda_{250\delta n}$ is connected
and contains
at most one node. Therefore, we have three cases:
either $\calF\cap \Lambda_{2\delta n}=\emptyset$, or $\calF\cap
\Lambda_{2\delta n}\ne\emptyset$ but $\calF\cap \Lambda_{20\delta n}$ contains
only one edge, or $\calF\cap \Lambda_{20\delta n}$ contains more than one edge.
In the later case, the fact that edges incident to a node make an angle larger
or equal to $\frac{\pi}2+\eta$ implies that $\calF\cap \Lambda_{40\delta n}$
contains a node.
\medbreak

Also set $\delta_3<\min\{\nu,\delta \}$. Proposition~\ref{upper-away} implies
that
\begin{equation}\label{eq:Cluster-close1}
\min_{\calF\in\Omega^{\rm min}_{M,m }}\mathrm{d}_\tau \bigl( \sfC_\bbG,
\calF\bigr) \le \delta n,
\end{equation}
with probability larger than $1-{\rm e}^{-\kappa_3 n}$ for $n$ large enough. We
now assume that this inequality is indeed satisfied.
Since, by~P1 of Proposition~\ref{prop:Steiner} the set $\Omega^{\rm min}_{M,m }$
is compact, and since we are after an upper bound  which vanishes with $n$,
it will be enough to fix a Steiner forest $\calF \in \Omega^{\rm min}_{M,m }$
and to assume that 
\begin{equation}\label{eq:Cluster-close}
\mathrm{d}_\tau \bigl( \sfC_\bbG, \calF \bigr) \le \delta n,
\end{equation}
Let us treat the three previous cases separately.
\begin{itemize}
\item[C1.] $\calF\cap\Lambda_{2\delta n} = \emptyset$. In such case,
\eqref{eq:Cluster-close} shows that $\sfC_\bbG\cap\Lambda_{\delta n}=\emptyset$.
Thus, $E^1_{\nu,\delta n}$ holds true and $\bbG_{\delta n,n}=\emptyset$.
\item[C2.] $\calF\cap\Lambda_{20\delta n}=[u_1,u_2]$ with $u_1$ and $u_2$ on
$\partial\Lambda_{20\delta n}$ and  $[u_1 ,u_2]\cap \Lambda_{2\delta n}\neq
\emptyset$. In such case, \eqref{eq:Cluster-close} shows that $\sfC_\bbG$
intersects $\Lambda_{3\delta n}$ which in turns implies that $E^2_{\nu,k}$ holds
for every $k\in[6\delta n,18\delta n]$. Proposition~\ref{flowers-ndelta} implies
the existence of $k\in[6\delta n,18\delta n]$ with $|\bbG_{k,n}|\le M$
on an event of probability larger than $1-{\rm e}^{-18\delta n}$.
\item[C3.] There exists a node $x\in \Lambda_{40\delta n}$ and therefore
$\calF\cap\Lambda_{250\delta n}=[u_1,x]\cup[u_2,x]\cup[u_3,x]$ with $u_1$,
$u_2$, $u_3$ on $\partial\Lambda_{250\delta n}$ such that $\Angle{u_i - x}{u_j -
x} >\frac{\pi}{2} +\eta$ for every $i\neq j$. In such case,
\eqref{eq:Cluster-close} shows that $E^3_{\nu,k}$ holds for every $k\in[
82\delta n,246\delta n]$. Proposition~\ref{flowers-ndelta} implies the existence
of $k\in[82\delta n,246\delta n]$ with $|\bbG_{k,n}|\le M$
on an event of probability larger than $1-{\rm e}^{-246\delta n}$.
\end{itemize}
Altogether, we obtain the claim.
\end{proof}
For later use, let us introduce the following definition:
\begin{definition}
\label{rem:Tripod}
For $u_1, u_2, u_3$ in general position the function $\phi (y ) := \sum_{i=1}^3
\tau (u_i -y)$ is strictly convex and quadratic around its minimum point;
see~\cite[Lemma 3]{CamGia09}. Let $x$ be the unique minimizer of $\phi$.  In
this way the notation $\calT (u_1, u_2 , u_3 ; x)$ is reserved for the minimal
Steiner forest (in this case it is a tree) which contains $u_1, u_2, u_3$. It
might happen, of course, that $x$ coincides with one of the $u_i$-s.  When,
however, this is not the case, we shall refer to $\calT (u_1, u_2 , u_3 ; x)$ as
a \emph{Steiner tripod}.
\end{definition}

\section{Fluctuation theory and proof of Theorem~\ref{main RC}}
\label{sec:Fluctuations}
We are now in a position to prove Theorem~\ref{main RC}. Let $\nu>0$ small
enough to be fixed later. By~\eqref{eq:reduction2}
and~\eqref{tripod-flower-weird}, we can assume that there exist
$\delta=\delta(\nu)>0$ and $k\ge\delta n$ such that $|\bbG_{k,n}|\le M$ and
$E^\ell_{\nu,k}$ holds true for some $\ell\in\{1,2,3\}$. Let

\noindent
\[
\calR_n =
\max \left\{ k\geq\delta n : |\bbG_{k,n}|\le M\text{ and }E^1_{\nu,k}\cup
E^2_{\nu,k}\cup E^3_{\nu,k} \right\} \in\left[\delta n , n\right].
\]
Let $\calC$ be a possible realization of $\sfC_{k,n}$ and $\calD=\calD_{k,n}$ be
the corresponding flower domain.  We also set $\bbG=\bbG_{k,n}$. The restriction
of $\FK\lb\cdot~|~\calR_n =k ;\sfC_{k, n}=\calC\rb$ to $\calD$ is
$\FKflower$. Exactly as in Section~\ref{sec:redu},
\[
\cond\cap\{ \calR_n= k\} \cap \{ \sfC_{k,n} = \calC \}
=
\Omega_{\sigma,\calC}\times \{ \calR_n= k ; \sfC_{k,n}=\calC\},
\]
where $\Omega_{\sigma ,\calC}=\cup_{\underline{\bbG}\in
\calP'_\bbG}\Omega_{\underline{\bbG}}$ is defined as in Section~\ref{sec:redu}.
This reduction shows that it is sufficient to prove that
\[
\mu^\sff_\calD\bigl( \sfC_\bbG\cap\Lambda_{n^\varepsilon}\neq \emptyset \bigm|
\Omega_{\sigma,\calC}\bigr)
=
O(n^{\ep-1/2}),
\]
uniformly in the possible realizations of $\calD$, $\calC$ and $\bbG$.
\medbreak

From now on, we fix $k\ge \delta n$ such that $|\bbG_{k,n}|\le M$ and
$E^\ell_{\nu,k}$ holds true for some $\ell\in\{1,2,3\}$. We set
$\calD=\calD_{k,n}, {\calC = \calC_{k, n}}$ and $\bbG=\bbG_{k,n}$. 
\medbreak 
Since each set $\bbV^i$ is
already assumed to be connected outside of $\calD$ (since $E^1_{\nu,k}\cup
E^2_{\nu,k}\cup E^3_{\nu,k}$ occurs), partitions $\ubbG\in \calP'_{\bbG}$ can be
of four different types (recall that they are maximal in the sense defined in
the previous section): singletons only, singletons together with one pair of
elements in two different $\bbV^i$ (this cannot occur in $E^1_{\nu,k}$),
singletons together with one triplet of elements in three different $\bbV^i$
(this can occur only in $E^3_{\nu,k}$), singletons together with two pairs
$(u,v)$ and $(u',w)$, where $u$ and $u'$ belong to the same $\bbV^i$, and $v$
and $w$ belong to the other $\bbV^j$ (this can occur only in $E^3_{\nu,k}$). Let
$\calP^*_{\bbG}$ be the set of partitions in $\calP'_{\bbG}$ of one of the first
three  types.
If the configuration is in $\Omega_{\sigma,\calC}\setminus
\cup_{\ubbG\in \calP^*_{\bbG}} \Omega_{\ubbG}$, there are two different clusters
connecting two pairs of vertices $(u,v)$ and $(u',w)$ satisfying the conditions
described above. By choosing $\nu>0$ small enough, the assumption that
$E^3_{\nu,k}$ holds implies that $\tau(u-v)+\tau(u'-w)\ge
(1+\varepsilon)\tau_{\bbG}$ (where $\varepsilon=\varepsilon(\delta_3,\nu)>0$)
uniformly in the possible pairs $(u,v)$ and $(u',w)$. As in the proof of
Proposition~\ref{upper-away}, one obtains after a small computation that
\[
\mu^\sff_{\calD}\bigl( \Omega_{\sigma,\calC}\setminus \cup_{\ubbG\in
\calP^*_{\bbG}}\Omega_{\ubbG}\bigm| \Omega_{\sigma,\calC}\bigr)
=
O({\rm e}^{-ck}),
\]
for some constant $c>0$. Hence, a reduction in the spirit of
Proposition~\ref{prop:reduction2} shows that Theorem~\ref{main RC} would follow
from the bound
\begin{equation}
\label{eq:Three}
\mu^\sff_{\calD}\bigl( \sfC_{\bbG}\cap\Lambda_{n^\varepsilon}\neq \emptyset
\bigm| \Omega_{\ubbG}\bigr)
=
O(n^{\ep-1/2}),
\end{equation}
where the right-hand side is uniform in the possible realizations of $\calD$ and
in the $\ubbG\in \calP^*_{\bbG}$. We decompose the proof of~\eqref{eq:Three}
into three cases, depending on the type of $\ubbG$.

\paragraph{Scenario S1: No imposed crossing.}
This occurs in the following two cases (cf. Definition \eqref{E1E2E3}):
(i) $E^1_{\nu,k}$ occurs;
(ii) $E^2_{\nu,k}\cup E^3_{\nu,k}$ occurs and the partition $\ubbG$ is composed
of singletons only.
In this case, the measure
$\mu^\sff_{\calD_{k,n}}(\,\cdot\,|\Omega_{\ubbG})$ is unconditioned (i.e.
$\Omega_{\ubbG}=\Omega$). Proposition~\ref{reference subcritical} then implies
that $\mu^\sff_{\calD}\bigl( \sfC_{\bbG}\cap\Lambda_{n^\varepsilon}\neq
\emptyset \bigm| \Omega_{\ubbG}\bigr)$ decays exponentially with $n$.

\paragraph{Scenario S2: One imposed crossing.}
This occurs when $E^2_{\nu,k}\cup E^3_{\nu,k}$ occurs and $\ubbG$ is composed of
singletons together with a unique pair $(u,v)$, where $u\in\bbV^i, v\in\bbV^{j}$
with $i\ne j$. 
In other words, $\Omega_{\underline{\bbG}} =\lbr u\leftrightarrow v\rbr$. 
In this case, the cluster $\calC_{\bbG}\subseteq\calD$ may contain several
connected 
components, but, up to exponentially 
small (in $k$)  $\mu^\sff_\calD(\cdot |u\leftrightarrow v)$-conditional
probabilities, 
only one of them, namely the connected cluster $\sfC (u, v)$ of $\lbr u,v\rbr$
is  capable of reaching $\Lambda_{n^\varepsilon}$.
However, the law of the cluster connecting $u$ and $v$ converges to the law of
a Brownian bridge. In fact, one obtains the following stronger result:
\begin{equation}\label{eq:OZe}
\mu^\sff_\calD(x\in \sfC (u, v )|u\leftrightarrow v)
\le
\frac{C}{\sqrt{|u-v|}}\exp\Bigl(-\kappa
\frac{d_\tau(x,[u,v])^2}{|u-v|}\Bigr),
\end{equation}
where $\kappa$ and $C$ are constants depending on $p$ only, and $[u,v]$ denotes
the segment between $u$ and $v$. In the case of Ising interfaces, such bound was
obtained in~\cite[(3.31)]{GreIof05}. 
The proof relies on the positive curvature of the surface tension and on the 
effective random walk with exponentially decaying step distribution 
representation of the interface. 
The theory developed in \cite{CamIofVel08} enables a literal 
adaptation to the case of sub-critical FK-clusters, see Theorems~C and E  
and Subsections~4.4 and 4.5 in \cite{CamIofVel08}.
Consequently, 
\begin{equation}\label{eq:S2}
\FKff\bigl( \sfC_\bbG \cap \Lambda_{n^\varepsilon}\neq \emptyset \bigm|
\Omega_{\ubbG}\bigr) = O(n^{2\ep-1/2}).
\end{equation}

\paragraph{Scenario S3: One tripod.}
This can only happen when $E^3_{\nu,k}$ occurs and $\underline\bbG$ is composed
of singletons together with one triplet $(u_1,u_2,u_3)$ with
$u_1\in\bbV^1,u_2\in\bbV^2,u_3\in\bbV^3$.
Thus, in this case $\Omega_{\underline{\bbG}} = \lbr \sfC (u_1, u_2 , u_3 )\neq
\emptyset\rbr$, where $\sfC (u_1, u_2 , u_3 )$ is the joint connected cluster of
$\lbr u_1, u_2 , u_3\rbr$. Again, $\sfC = \sfC_{\bbG}\subseteq \calD$ may
contain several connected components, but, up to exponentially small (in $k$) 
$\mu^\sff_\calD(\cdot |\sfC (u_1 , u_2 , u_3 )\neq \emptyset)$-conditional
probabilities, only one of them, namely $\sfC (u_1 , u_2 , u_3 )$ itself, is 
capable of reaching $\Lambda_{n^\varepsilon}$.
By definition, there exists a unique 
$x{= x (u_1, u_2 , u_3 )} \in\Lambda_{k/2}$ (see
Definition~\ref{rem:Tripod}) such 
that $\calT_x = \lbr
u_1 ,u_2 ,u_3 ; x\rbr$ is a Steiner tripod. To lighten the notation, we set
\[
E(u_1,u_2,u_3,x)=\{u_1,u_2,u_3 \text{ are connected  and }
\mathrm{d}_\tau(\sfC_\bbG, \calT_x)\leq \nu k\}
\]
and redefine $\sfC = \sfC (u_1 , u_2 , u_3 ) $.
Thanks to Propositions~\ref{tripod} and~\ref{upper-away}, we now aim at proving
the bound 
\begin{equation}\label{eq:ThreeReduction}
\mu^\sff_{\calD}\bigl( {\mathsf C}\cap\Lambda_{n^{\ep}}
\neq\emptyset \bigm| E(u_1,u_2,u_3,x)\bigr)
=
O(n^{\ep-1/2}).
\end{equation}
This bound will imply Theorem~\ref{main RC}.

\smallbreak
Proving~\eqref{eq:ThreeReduction} is more complicated than
proving~\eqref{eq:S2}. Nevertheless, the idea remains the same: The tripod has
Gaussian fluctuations, therefore it intersects a small box with probability
going to 0. In the case of percolation, fluctuations of tripods on the level of
local limit results were studied in~\cite{CamGia09}. We are not after  a full
local limit picture here,  and merely explain how techniques
from~\cite{CamIofVel08} allow to derive~\eqref{eq:ThreeReduction}. Let us write
$\Lambda_r(x)$ for $x+\Lambda_r$.
\smallbreak
\begin{definition}\label{def:cones} (Cones $\calY_1,\calY_2,\calY_3$)\\
Since, {by Property P2 of the Steiner forests,} for every $i\ne j$,
\[
\Angle{u_i-x}{u_j-x} \ge \frac\pi2 + \eta,
\]
there exist disjoint cones $\calY_1, \calY_2$ and $\calY_3$ such that each
$\calY_i$ contains exactly one lattice direction in its interior (\emph{i.e.},
one of the four vectors $(1,0)$, $(0,1)$, $(-1,0)$ and $(0,-1)$, denoted by
$\sff_i$), and there exists $\ep_1>0$ such that $u_i\in {\rm int}\lb y+
\calY_i\rb$ for every $y\in \Lambda_{\ep_1 k }(x)$ and every $i\in\{1,2,3\}$,
and $u_i \in {\rm int}\lb u_j - \calY_i\rb$ for every $i\ne j$.
\end{definition}
\begin{definition}\label{def:S(t,y)} (Event $S(t,y)$)\\
Given $y\in \Lambda_{\ep_1 k} (x) $ and $t\in\bbN$, let  $\calS(t ,y)$ be the
event that the following three conditions occur:
\begin{itemize}
\item[{\rm R1.}]
$u_1$, $u_2$ and $u_3$ are pairwise disconnected in
$\sfC\setminus\Lambda_t(y)$,

\item[{\rm R2.}]
$\sfC$ intersects $\partial \Lambda_t (y )$ in exactly three
vertices.
\end{itemize}
For $i=1,2,3$, let $\sfC_i(t ,y)$ be the connected component of $\sfC\setminus
\Lambda_t (y )$ containing $u_i$, and $v_i(t ,y)=\sfC_i(t ,y)\cap \partial
\Lambda_t (y )$. Define $\sfC_0(t ,y)=\sfC\setminus\big(\sfC_1(t ,y)
\cup\sfC_2(t ,y) \cup\sfC_3(t ,y)\big)$. We will drop the reference to $t$ and
$y$ when no confusion is possible.

\begin{itemize}
\item[{\rm R3.}]
$\sfC_0$ is contained in $\bigcap_{i=1}^3\lb v_i - \calY_i\rb$ and 
$\sfC_i\subset \lb v_i +\calY_i\rb$ for $i\in\{1,2,3\}$.
\end{itemize}
\end{definition}

\begin{figure}
\begin{center}
\includegraphics[width=0.60\textwidth]{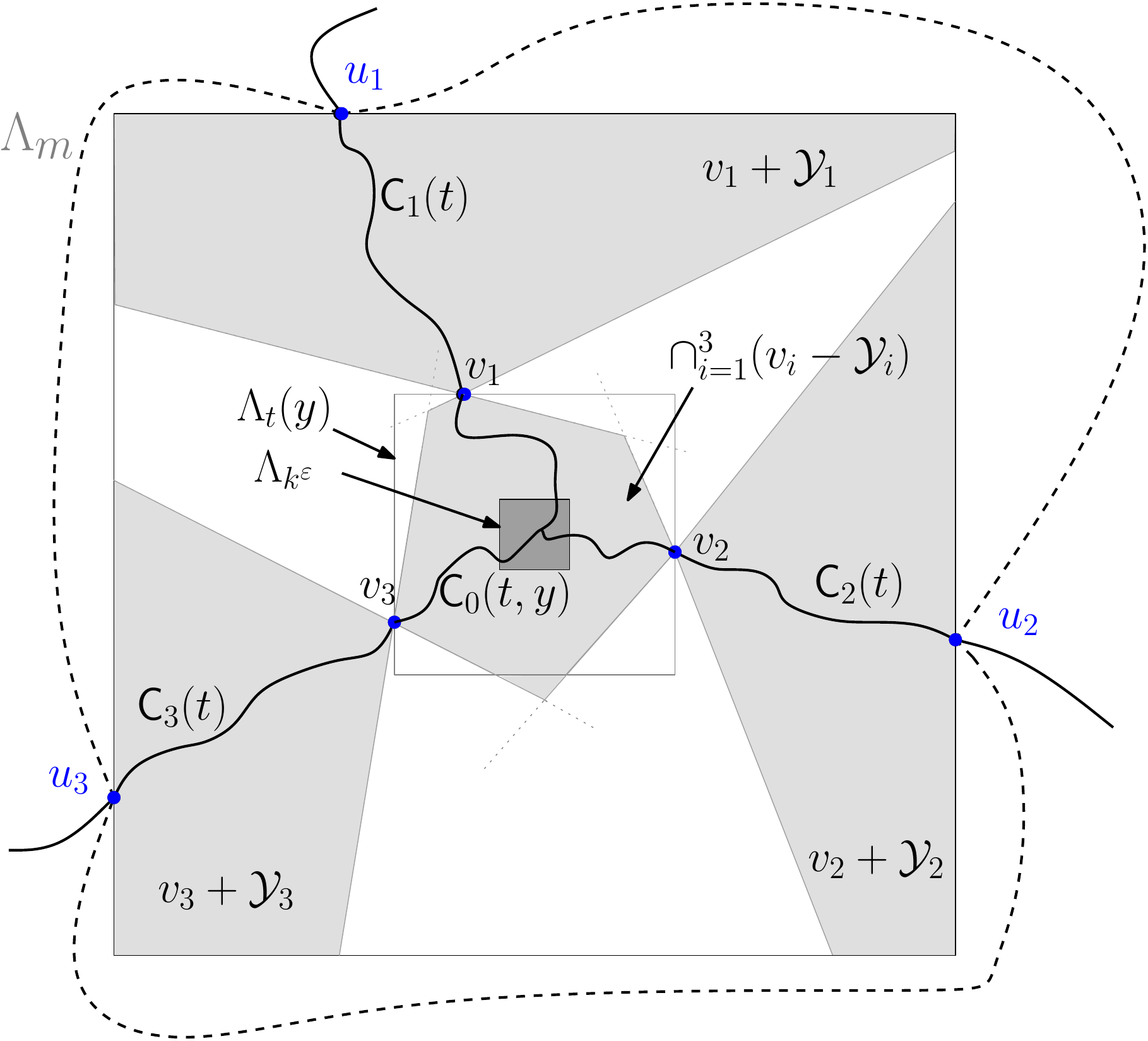}
\end{center}
\caption{Description of the event $S(t,y)$, namely the cones and the
decomposition of the cluster $\sfC$ into $\sfC_i(t)$ and $v_i(t)$, $i=1,2,3$.}
\label{fig:strongpivotal}
\end{figure}

\begin{lemma}\label{split-bound}
Fix $\ep_1 >0$ and let $\ep>0$ be sufficiently small.
There exists $C>0$ such that
\begin{equation}\label{eq:split-bound}
\mu_\calD^\sff\Bigl(\bigcup_{t\leq Ck^\ep} \bigcup_{ y\in \Lambda_{\ep_1 k}(x)}
\calS (t,y) \Bigm| E(u_1,u_2,u_3,x) \Bigr)
\ge
1-O({\rm e}^{-k^{\ep}}) .
\end{equation}
\end{lemma}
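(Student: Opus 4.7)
The plan is to combine the Ornstein--Zernike cone-point decomposition of sub-critical FK clusters from~\cite{CamIofVel08} with the tripod geometry inherited from $E(u_1,u_2,u_3,x)$, in order to extract a single mesoscopic splitting window inside $\Lambda_{\varepsilon_1 k}(x)$. First, Proposition~\ref{upper-away} applied on the event $E(u_1,u_2,u_3,x)$ guarantees that, with $\mu^{\sff}_\calD(\cdot \mid E(u_1,u_2,u_3,x))$-probability at least $1-{\rm e}^{-\kappa_3 k}$, the cluster $\sfC$ lies within $\tau$-distance $\nu k$ of the Steiner tripod $\calT_x = \{u_1,u_2,u_3;x\}$. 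Taking $\nu$ sufficiently small compared with the opening of the cones $\calY_i$ of Definition~\ref{def:cones}, this traps the three arms of $\sfC$ (the sub-clusters of $u_1,u_2,u_3$) into disjoint tubes along the segments $[u_i,x]$ whenever one looks outside $\Lambda_{\varepsilon_1 k / 2}(x)$. In particular, any merging of the arms has to occur inside $\Lambda_{\varepsilon_1 k}(x)$.

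Next I would invoke the cone-point decomposition of sub-critical FK arms from Sections~4.4--4.5 of~\cite{CamIofVel08}: each arm admits a representation as a concatenation of irreducible pieces separated by $\calY_i$-cone points, which occur with positive linear density and exponentially decaying gaps. Setting $t = C k^\varepsilon$ with $C$ large enough, any given arm contains a $\calY_i$-cone point within $\tau$-distance $t$ of any prescribed portion of length $t$, with conditional probability $1 - O({\rm e}^{-k^\varepsilon})$. Then I would define $y$ to be (essentially) the first site at which the three arms simultaneously enter a common box of side $t$; since $y \in \Lambda_{\varepsilon_1 k}(x)$, a union bound over the $O(k^2)$ candidate positions costs only a polynomial factor and is absorbed by the exponentially small error coming from the cone-point estimate.

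Inside $\Lambda_t(y)$ the three arms are forced to merge into a single connected sub-cluster $\sfC_0$, which gives R1. At the $\calY_i$-cone point of each arm immediately outside the window, the defining cone property forces the arm to cross $\partial \Lambda_t(y)$ at a single vertex $v_i$ with $\sfC_i \subset v_i + \calY_i$, yielding R2 and the forward part of R3; the backward inclusion $\sfC_0 \subset \bigcap_i (v_i - \calY_i)$ follows since any point of $\sfC_0$ outside that intersection would create an extra crossing of $\partial \Lambda_t(y)$, in contradiction with R2. The main obstacle is that conditioning on the three arms actually merging makes them dependent, so the single-arm OZ statements do not apply off the shelf. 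This is handled by the effective random-walk representation of~\cite{CamIofVel08}: after the conditioning, the three arms near their junction decouple into independent OZ random walks constrained to meet, and therefore the joint cone-point statistics factorise up to corrections which are exponentially small in $k^\varepsilon$. The resulting probabilistic estimate, combined with the deterministic trapping from Proposition~\ref{upper-away}, yields~\eqref{eq:split-bound}.
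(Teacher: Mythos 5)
Your outline identifies the right ingredients -- coarse-grained skeleton, cone points, trapping into disjoint cones, and extraction of a mesoscopic break box near the junction -- and this is broadly the route the paper takes. But the proposal asserts, rather than establishes, precisely the two steps that carry the weight of the argument, and as written it would not close.

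First, the phrase ``the three arms of $\sfC$'' and the definition of $y$ as ``the first site at which the three arms simultaneously enter a common box'' presuppose a well-defined triple junction at a scale much finer than $\ep_1 k$. The confinement $\dd_\tau(\sfC_\bbG,\calT_x)\le \nu k$ is built into the event $E(u_1,u_2,u_3,x)$ (it is not an additional application of Proposition~\ref{upper-away}), and it only separates the arms outside $\Lambda_{\ep_1 k}(x)$; inside that macroscopic box the decomposition into ``arms'' is circular until one has located the junction. The paper resolves this by constructing the $k^{\epp}$-scale skeleton $\calT_{k^{\epp}}$ of $\sfC$, showing it has no branches (Property~T1), so that it is a genuine tripod with a triple point $x_{\epp}$; only then can one speak of three legs.

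Second, and more substantively, the claim that ``after the conditioning, the three arms near their junction decouple into independent OZ random walks constrained to meet, and therefore the joint cone-point statistics factorise up to corrections which are exponentially small'' is exactly the content of Property~T2, and it is not free. The paper proves it via an energy/entropy comparison: one lower-bounds $\mu_\calD^\sff(E(u_1,u_2,u_3,x))\ge \exp(-\sum_i\tau(u_i-x)-C_1\log k)$ using FKG and the sharp OZ two-point asymptotics, then shows that if a leg $\calT_{k^{\epp}}^i$ fails to have a $2\kappa$-cone point outside $\Lambda_{k^\ep}(x_{\epp})$, its $\tau$-length exceeds $\tau(u_i-x_{\epp})$ by a macroscopic amount (via strict convexity of $\tau$ if the first cone point $\sfw_i^*$ lies in $\calY_{i,\kappa}$, or via the sharp triangle inequality~\eqref{eq:sharp-ti} otherwise), which together with the skeleton-counting bound~\eqref{ar} makes such configurations exponentially unlikely relative to the lower bound. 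This argument is the heart of the lemma; without it the ``decoupling'' is an unsupported assertion. Finally, the estimate is not closed by a union bound over $O(k^2)$ candidates for $y$: once T1 and T2 hold, the coarse-graining and mass-gap arguments of~\cite[Section~2]{CamIofVel08} applied to the three separated clusters produce a $\calY_i$-break point for each arm on some common sphere $\partial\Lambda_t(x_{\epp})$ with $t=\Theta(k^\ep)$, with a single exponentially small failure probability and no polynomial entropy cost over the location of $y$.
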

\begin{proof}[Proof of Lemma~\ref{split-bound}]
First of all, we notice that coarse-graining on the $k^\ep$-scale enables a
reduction to particularly simple geometric structures. Consider a forest
skeleton of the cluster $\sfC$ at scale $k^\ep$. Note that, conditionally on
$E(u_1,u_2,u_3,x)$, this forest is in fact a tree $\calT_{k^\ep}$. 

We define the
\emph{trunk} $\frt_\ep$ of $\calT_{k^\ep}$  as the
minimal subtree of $\calT_{k^\ep}$ which spans  $\lbr u_1, u_2, u_3\rbr$.

We define the \emph{branches} of $\calT_{k^\ep}$ as $\frB_\ep
=\calT_{k^\ep}\backslash \frt_\ep$. 
In this case, we obtain the following reduced geometry of typical
$\calT_{k^\ep}$, which holds uniformly in all situations in question, up to
probabilities which are exponentially small in $k^\ep$:
\begin{itemize}
\item[T1.]
$\calT_{k^\ep}$ does not have branches.
This means that the tree $\calT_{k^\ep}$ consists only of a trunk which
is a tripod, i.e. with one vertex of degree 3 and all other vertices of degree
at most 2. We will write $x_\ep$ for the only triple point of $\calT_{k^\ep}$,
and $\calT_{k^\ep}^i =\{ u_{i ,\ep}^{n_i}, \dots, u_{i ,\ep}^1 = x_\ep\}$,
$i=1,2,3$, for the three {legs}  of $\calT_{k^\ep}$. Note that $u_i\in
\hat\bfB_{2k^\ep}(u_{i,\ep}^{n_i})$.
\item[T2.]
Fix $\kappa > 0$ small. For every $\ep > 0$ and each $\epp\in (0,\ep/2)$, the
skeletons $\calT_{k^\epp}^i\setminus \Lambda_{k^\ep} (x_\epp )\subseteq x_\epp +
\calY_{i, 2\kappa} $ as soon as $k$ becomes sufficiently large, where cones
$\calY_{i, 2\kappa}$ are defined via
\begin{equation}
\label{eq:eta2}
\calY_{i, r} = \bigl\{ z : \Angle{z}{u_i-x_\epp} \leq r\bigr\}.
\end{equation}
That is, the vertices of each of the three branches of $\calT_{k^\epp}$ outside
the box $\Lambda_{k^\ep}$ are confined to the respective cones $x_\epp +
\calY_{i, 2\kappa}$.
\end{itemize}
Before proving Properties~T1 and~T2, let us describe how they can be used
to prove the lemma. First of all, note that, by Proposition~\ref{upper-away}, we
may assume that $|x_\epp - x|\leq \delta_1 k$ with $\delta_1 >0$ fixed as small
as we wish. In particular, we may assume that $u_i\in {\rm int} (x_\epp
+\calY_i)$ (see Definition~\ref{def:cones}) and, consequently, that $\calY_{i
,2\kappa}\subset \calY_i$.\\
By Proposition~\ref{prop:basic skeleton}, the connected cluster $\sfC$ is
included in $\calT_{k^\epp} + 2k^\epp {\bf U}_\tau$. Therefore, Properties~T1
and~T2 imply that $\sfC\setminus \Lambda_{k^\ep} (x_\epp )= \tilde\sfC_1
\cup\tilde \sfC_2 \cup \tilde\sfC_3$, where $\tilde \sfC_1$, $\tilde \sfC_2$ and
$\tilde \sfC_3$ are the clusters (in $\sfC\setminus \Lambda_{k^\ep} (x_\epp )$)
of $u_1$, $u_2$ and $u_3$ respectively. Note that, by~T2, clusters
$\sfC_i$ are confined to the sets (actually truncated cones) 
$( x_\epp + \calY_{i ,2\kappa} + 2k^\epp {\bf U}_\tau ) \setminus
\Lambda_{k^\ep} (x_\epp )$, which are  well separated on the  $k^\ep$-scale.
Consequently coarse-graining estimates
developed in~\cite[Section~2]{CamIofVel08} apply to each of $\sfC_i$ separately.
As a result, the claim of
Lemma~\ref{split-bound} follows by a straightforward adaptation of the 
mass-gap arguments of \cite[Section~2]{CamIofVel08}
applied separately to each of the three disjoint clusters $\tilde\sfC_1
,\tilde\sfC_2 $ and $\tilde\sfC_3$. For instance, one can show the following:
Fix $r$ large enough so that $\Lambda_{k^\epp} (x_\epp )\subset v- \calY_i$ for
any $v\in (x_\epp + \calY_{i ,2\kappa}) \cap (\Lambda_{2rk^\ep}
(x_\epp)\setminus \Lambda_{r k^\ep} (x_\epp))$ and $i=1,2,3$. Then, up to
probabilities which are exponentially small in $k^\ep$, there exists $t\in
[rk^\ep ,2rk^\ep]$ such that each of the clusters $\tilde\sfC_i$ contains a
$\calY_i$-break point on $\partial\Lambda_{t} (x_\epp)$. That is,
\begin{itemize}
\item for $i=1,2,3$, the intersection $v_i = \tilde\sfC_i\cap\partial \Lambda_t
(x_\ep )$ is a singleton;
\item for $i= 1,2,3$ the cluster $\tilde\sfC_i \subset (v_i +\calY_i) \cup (v_i
-\calY_i)$.
\end{itemize}
This ensures
$\calS (t ,y)$ for some $y\in \Lambda_{\ep_1 k}$ and~\eqref{eq:split-bound}
follows.
\end{proof}
\smallskip 

\noindent
For the proof of Property T1, we refer to~\cite[Lemma~2.1
and~2.2]{CamIofVel08}.
\begin{proof}[Proof of Property T2.]
Let us start with a lower bound on $\mu_\calD^\sff(E(u_1,u_2,u_3,x))$
which will be used later as a test threshold quantity for ruling out improbable 
events. 
Let $y$ be a lattice approximation of $x$. 
By the FKG inequality, 
\[
 \mu_\calD^\sff(E(u_1,u_2,u_3,x)) \geq 
 \mu_\calD^\sff \Bigl( \bigcap_{i=1}^{3} \bigl\{ y\stackrel{\calD}{\lra}
u_i\bigr\}\Bigr) 
 \geq \prod_{i=1}^3 
 \mu_\calD^\sff\bigl(  y\stackrel{\calD}{\lra} u_i\bigr) .
\] 
Theorem~A in \cite{CamIofVel08} gives sharp asymptotics of quantities $\mu^\sff
\lb 
y\lra v_i\rb$. These sharp asymptotics are built  upon an effective random walk 
representation of events $\lbr y\lra u \rbr$ as described in Subsection~4.1 of 
the the  paper. Steps of this random walk have effective drift from $u_i$
towards
$y$, and, since $\Lambda_k\subset \calD$, 
it is easy to adjust the arguments therein  in order  to show that 
\[
 \mu^\sff_\calD \bigl( y \stackrel{\calD}{\lra} u\bigr) \geq
\frac{C_0}{\sqrt{k}} {\rm e}^{-\tau (u -y )}, 
\]
uniformly in $y\in \Lambda_{\frac{k}{2}}$ and $v\in\partial\Lambda_k$, 
where $C_0$ (and, similarly, $C_1$, $C_2$, $\ldots$ below) is a universal
constant, in the sense that~\eqref{eq:lb-ep} applies uniformly in all the
situations in question as soon as $k$ is sufficiently large. Consequently, 
\begin{equation}\label{eq:lb-ep}
\mu_\calD^\sff(E(u_1,u_2,u_3,x))
\geq
\exp\Bigl( -\sum_{i=1}^3 \tau (u_i-x) - C_1 \log k \Bigr),  
\end{equation}
also uniformly in all the situations in question as soon as $k$ is sufficiently
large.  

Next, let us say that $\sfw\in\calT_{k^\epp}^i$ is a $2\kappa$-cone point of
$\calT_{k^\epp}^i$ if $\calT_{k^\epp}^i\subset \lb w-\calY_{i ,2\kappa}\rb \cup
\lb w+\calY_{i ,2\kappa}\rb$. 
In our notation, 
\[
 \tau (\calT_{k^\epp} ) = \sum_{i=1}^3 \tau (\calT_{k^\epp}^i )
\]
Since $\tau$ is a strictly convex norm (\cite[Subsection~1.3.2]{CamIofVel08}) ,
\begin{equation}
 \label{eq:taubound}
 \tau (\calT_{k^\epp}^i)  \geq \tau (u_i - x_\epp)\lb 1 +\delta (\kappa )\rb 
 \geq \tau (u_i - x_\epp)+ C_2  k, 
\end{equation}
whenever $\calT_{k^\epp}^i$ does not contain $2\kappa$-cone points at all.
This is essentially Lemma~2.4 of~\cite{CamIofVel08}. In view of~\eqref{ar}, and
in view of the lower bound~\eqref{eq:lb-ep}, 
we are entitled to ignore the situation when any of the $\calT_{k^\epp}^i$ does
not have $2\kappa$-cone points at all.

In the sequel, we use $\sfw^*_i$ to denote the first
$2\kappa$-cone point of $\calT_{k^\epp}^i$ (starting at $x_{\ep'}$) and $N_i$ to
denote its serial number; that is, $\sfw_i^* = u^{N_i}_{i,\epp}$.
Define $\calT_{k^\epp}^{i,*} 
= \{ u^1_{i, \epp}, \ldots , u^{N_i}_{i , \epp} = \sfw_i^*\}$ as the portion of
$\calT_{k^\epp}^i$ up to $\sfw_i^*$. 
Given $y$ and $\underline{\sfw} = \lb \sfw_1 , \sfw_2, \sfw_3\rb$,  define
the percolation event $E_\epp (y, \underline{\sfw} )\subset E(u_1,u_2,u_3,x)$
as 
\[
E_\epp(y, \underline{\sfw})= \bigl\{ x_\epp = y\, ;\, 
\sfw_i^* = \sfw_i\ \text{for $i=1,2,3$} \bigr\} .
\]
In view of~\eqref{eq:lb-ep}, Property~T2 will follow as soon as we shall have
checked that 
\begin{equation}
 \label{eq:T2-form}
  \mu^\sff_\calD \lb E_\epp(y, \underline{\sfw})\rb 
  \leq {\rm e}^{-\sum_i\tau (u_i -y) - C_3 k^\ep },
\end{equation}
uniformly in $k$,  tripods $\calT_x$, 
$y$ and $\underline{\sfw}\not\subset \Lambda_{k^\ep} (y )$.
For fixed realizations $\calT^{i, *}$ of $\calT^{i, *}_{k^\epp}$ we have 
\[
\mu^\sff_\calD \bigl( E_\epp(y, \underline{\sfw}); 
\calT^{i, *}_{k^\epp} = \calT^{i, *}\ \text{for $i=1,2,3$} \bigr)
\leq {\rm exp}\Bigl\{ -\sum_{i=1}^3\bigl\{ \tau (u_i - \sfw_i) +\tau
(\calT^{i,*}) (1-\smo{k^\epp})\bigr\} + C_4 k^{2\epp }\Bigr\} .
\]
This follows from~\eqref{ar} and from the finite energy property 
(applied for configurations on $\Lambda_{C_5 k^\epp} (\sfw_i )$) of the 
FK measures. Indeed, the finite energy property and the 
exponential ratio mixing property~\eqref{eq:ratio-mixing}
enable to decouple between the event $\bigcap_i \bigl\{ \calT^{i, *}_{k^\epp} =
\calT^{i, *}\bigr\}$ and the events $\bigcap_i \bigl\{ \sfw_i\stackrel{\sfw_i + 
\calY_{i ,2\kappa }}{\longleftrightarrow} u_i\bigr\}$.

Assume, for instance, that  $\sfw_1\not\in \Lambda_{k^\ep } (y)$.  
There are two cases to consider:

\case{1}: $\sfw_1 \in y + \calY_{1, \kappa}$. Then, exactly as
in~\eqref{eq:taubound}, $\tau (\calT^{1, *} )\geq \tau (\sfw_1-y ) +
C_6\abs{\sfw_1 - y}$. 
As in~\eqref{eq:tree-number} the entropic factor related to the number of
possible compatible realizations $\calT^{1 ,*}$ is suppressed,
and~\eqref{eq:T2-form} follows as soon as we choose $\ep>2\epp$. 
\smallskip 

\case{2}: $\sfw_1 \in ( y + \calY_{1, 2\kappa})\backslash ( y + \calY_{1,
\kappa})$.
By construction, $\tau (\calT^{1, *} ) \geq \tau (\sfw_1 - y )$. 
However, by the sharp triangle inequality~\eqref{eq:sharp-ti}, 
\[
\tau (\sfw_1 - y) +\tau (u_1 - \sfw_1 ) - \tau (u_1 -y ) \geq C_7\abs{\sfw_1
- y},
\]
uniformly in $\sfw_1$ under consideration. 
Again, since the entropic factor is suppressed, \eqref{eq:T2-form} follows.
\end{proof}

\begin{lemma}
\label{lem:stweights}
Let $S_\ep(y)=\bigcup_{t\leq Ck^\ep}S(t,y)$.
There exist two universal constants $\kappa>0$ and $C<\infty$ such that
\begin{equation}
\label{eq:stweights}
\mu_{\calD}^\sff \bigl( S_\ep(y) \bigm\vert E(u_1,u_2,u_3,x) \bigr)
=
O \Bigl( k^{12\ep-1} \exp\bigl(-\kappa
\frac{\abs{y -x}^2}{k}\bigr)\Bigr) ,
\end{equation}
uniformly in $y\in\Lambda_{\ep_1 k} (x)$.
\end{lemma}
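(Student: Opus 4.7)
The plan is to bound the numerator $\mu_\calD^\sff(S_\ep(y))$ from above and the denominator $\mu_\calD^\sff(E(u_1,u_2,u_3,x))$ from below using the sharp Ornstein--Zernike asymptotics of \cite{CamIofVel08}, and to extract the Gaussian factor from the strict convexity, around its minimizer $x$, of the Steiner functional
\[
\phi(z) \;=\; \sum_{i=1}^3 \tau(u_i - z).
\]
By Lemma~3 of \cite{CamGia09} (already invoked in Definition~\ref{rem:Tripod}), $\phi$ is strictly convex and quadratic near $x$, with Hessian of order $1/k$ (by homogeneity of $\tau$, since the $u_i$ lie at distance $\Theta(k)$ from $x$). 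Hence $\phi(y)-\phi(x) \geq \kappa |y-x|^2/k$ uniformly in $y\in\Lambda_{\ep_1 k}(x)$.

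First I would lower bound the denominator exactly as in~\eqref{eq:lb-ep}: sharpening the random-walk representation of Theorem~A of \cite{CamIofVel08} together with FKG yields
\[
\mu_\calD^\sff(E(u_1,u_2,u_3,x)) \;\geq\; C_0\, k^{-3/2}\, \mathrm{e}^{-\phi(x)}.
\]
Next I would decompose $S_\ep(y) = \bigcup_{t\le Ck^\ep}\bigcup_{\underline v} S(t,y;\underline v)$, where $\underline v = (v_1,v_2,v_3)$ runs over admissible triples on $\partial \Lambda_t(y)$. On $S(t,y;\underline v)$ the cluster $\sfC$ splits into an interior piece $\sfC_0 \subseteq \Lambda_t(y)$ and three cone-confined pieces $\sfC_i \subseteq v_i+\calY_i$ joining $v_i$ to $u_i$. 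Exploiting the exponential ratio-mixing \eqref{eq:ratio-mixing} and the finite-energy property, the four regions decouple up to $\mathrm{e}^{o(k)}$ corrections, and each cone-confined connection probability is controlled by the OZ upper bound
\[
\mu^\sff\bigl(v_i\slra{v_i+\calY_i}u_i\bigr) \;\leq\; \tfrac{C}{\sqrt{|u_i-v_i|}}\, \mathrm{e}^{-\tau(u_i-v_i)}
\]
(Theorems~A, C, E and Section~4 of \cite{CamIofVel08}). Applying the sharp triangle inequality~\eqref{eq:sharp-ti}, $\tau(u_i-v_i) \geq \tau(u_i-y) - Ct$, so that $\sum_i\tau(u_i-v_i) \geq \phi(y) - 3Ck^\ep$. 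The $k^{-3/2}$ prefactors in numerator and denominator cancel, and combining with the quadratic bound on $\phi(y)-\phi(x)$ produces the Gaussian factor $\exp(-\kappa|y-x|^2/k)$. Finally, the sum over $t\leq C k^\ep$ contributes $O(k^\ep)$ while the sum over triples $\underline v \in (\partial\Lambda_t(y))^3$ contributes $O(k^{3\ep})$; together with the subpolynomial corrections from the OZ sharp asymptotics, these entropic factors combine to produce the claimed $k^{12\ep-1}$ prefactor.

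The main obstacle is Step~2: carrying out the cone decomposition in a way that (i) respects the split-point geometry imposed by $S(t,y;\underline v)$, (ii) treats the three legs $v_i\to u_i$ as effectively independent random walks so that the sharp OZ upper bounds can be multiplied, and (iii) matches the sharp constants appearing in the lower bound on $\mu_\calD^\sff(E)$ well enough that no spurious $\mathrm{e}^{\Theta(k^\ep)}$ factor survives after Step~3. This is precisely where the full strength of the irreducible decomposition of~\cite{CamIofVel08}, combined with ratio mixing and finite energy, is needed; all other ingredients (the sharp triangle inequality, the quadratic expansion of $\phi$ at its Steiner point, and the crude entropic book-keeping over $t$ and $\underline v$) are local and essentially deterministic.
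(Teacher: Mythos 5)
Your overall plan—decompose according to the split geometry, use Ornstein--Zernike asymptotics on each leg, and extract the Gaussian factor from the strict convexity of $\phi$ at its Steiner point—is natural, but there is a genuine quantitative gap: your bookkeeping does not produce the factor $k^{-1}$ in~\eqref{eq:stweights}, and this factor is essential for the subsequent sum over $y$ in~\eqref{eq:ysum} to close.

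Concretely: your denominator bound $\mu_\calD^\sff(E)\geq C_0\,k^{-3/2}\,\mathrm e^{-\phi(x)}$ (which is indeed what~\eqref{eq:lb-ep} gives) is obtained by \emph{pinning} the split point near $x$ and applying FKG. But $E$ allows the split point to float diffusively over a region of linear size $\Theta(\sqrt k)$, so this crude lower bound loses a factor $\Theta(k)$ against the true value. Your numerator bound $\mu_\calD^\sff(S_\ep(y))\le O(k^{O(\ep)})\,k^{-3/2}\,\mathrm e^{-\phi(y)}$ also pins the split point (now near $y$), so the two $k^{-3/2}$ prefactors cancel and the ratio comes out as $O(k^{O(\ep)}\mathrm e^{-\kappa|y-x|^2/k})$, off by $k^{1-O(\ep)}$ from the target. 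There is no way the "subpolynomial corrections from the OZ sharp asymptotics" you mention can supply this missing power of $k$; it has a clear physical origin (the free split point contributes an entropy $\sim k$) and must be accounted for structurally. Fixing the denominator by proving the sharp estimate $\mu_\calD^\sff(E)\gtrsim k^{-1/2}\mathrm e^{-\phi(x)}$ would require a local limit theorem for tripod connections in FK percolation (the analogue of~\cite{CamGia09}, which is stated for Bernoulli percolation only), which is precisely what the paper explicitly avoids.

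The paper circumvents this by never estimating $\mu_\calD^\sff(E)$ sharply. It compares the events $\calS^W(t,y)$ and $\calS^W(t,z)$ \emph{to each other}, obtaining the ratio $\mu_\calD^\sff(\calS^W(t,y))/\mu_\calD^\sff(\calS^W(t,z))=\Theta(\mathrm e^{\phi_W(y)-\phi_W(z)})$ uniformly by cancelling the shared $\sfC_0$-sum and the $k^{-3/2}$ prefactors. It then relates $\mu_\calD^\sff(S_\ep(x))$ to $\mu_\calD^\sff(E)$ by an \emph{averaging argument}: summing the first inequality in~\eqref{eqr} over the $\sim k^{1-2\ep}$ sites $y\in\Lambda_{k^{1/2-\ep}}(x)$ and observing that each configuration in $E$ triggers at most $O(k^{2\ep})$ of the events $\calS_\ep(y)$, one gets $\mu_\calD^\sff(S_\ep(x))\leq O(k^{-1+8\ep})\mu_\calD^\sff(E)$. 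That is where $k^{-1}$ comes from. You would need to incorporate this ratio-and-averaging idea (or a genuinely sharp local CLT for FK tripods) to complete your argument; otherwise the bound you get is too weak to feed into the final sum in~\eqref{eq:ysum}.
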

\begin{proof}
Decompose
\begin{eqnarray*}\label{decS2}
\calS( t, y ) = \bigcup_{W}\calS^{W} (t,y)
\end{eqnarray*}
according to the triple $W = \{ v_1-y ,v_2-y ,v_3-y \} \subset
\partial\Lambda_t$ which shows up in the definition. From now on, we set
$w_1=v_1-y$, $w_2=v_2-y$ and $w_3=v_3-y$.

Since, under the event $S^W(t,y)$, we have that
$\sfC_0\subseteq\bigcap_{i=1}^3\lb v_i - \calY_i\rb$ and that the points $u_i$
lie deep in the interior of the corresponding cones $v_i+\calY_i$, with
$v_i\in\partial\Lambda_t(y)$ and $t\leq Ck^\ep$, the Ornstein-Zernike
asymptotics of~\cite[Theorem A]{CamIofVel08} imply that
\begin{equation}\label{eq:ac}
\mu_{\calD}^\sff \Bigl( \bigcap_{i=1}^3\{ \sfC_i\subset v_i +\calY_i \}  \Bigm|
\sfC_0(t,y)\Bigr)
=
\Theta\bigl(k^{-3/2}\, {\rm e}^{-\sum_{i=1}^3 \tau (u_i-v_i)}\bigr),
\end{equation}
uniformly in any possible realization $\sfC_0$  of
$\sfC_0(t,y)$ compatible with $S^W(t,y)$.  
Note that if $\sfC_0(t,y)$ is compatible with $S^W(t,y)$, then  shifts
$\sfC_0^{u}\df \sfC_0 +u$ are compatible with shifted events $S^W(t,y+u)$.

Recall Definition~\ref{rem:Tripod} of $\phi ( y)$. Given a triple 
$W = \lbr w_1 , w_2 , w_3\rbr \subset \Lambda_{Ck^\ep}$, let us define
\[
\phi_W (y) = \sum_{i=1}^3 \tau (u_i-w_i-y).
\]
Together with~\eqref{eq:ac}, we obtain
\begin{align}
\label{eq:Ratios}
\frac{ \mu_{\calD }^\sff\bigl(\calS^{W}(t,y) \bigr) } { \mu_{\calD
}^\sff \bigl( \calS^{W}(t,z) \bigr) } 
&=
\frac{\sum_{\sfC_0}\mu_{\calD}^\sff
\Bigl( \bigcap_{i=1}^3\{ \sfC_i\subset y+w_i +\calY_i \}  \Bigm|
\sfC_0\Bigr)\mu_\calD^\sff(\sfC_0(t,y)=\sfC_0)}
{\sum_{\sfC_0}\mu_{\calD}^\sff \Bigl( \bigcap_{i=1}^3\{ \sfC_i\subset z+w_i
+\calY_i \}  \Bigm|
\sfC_0\Bigr)\mu_\calD^\sff(\sfC_0(t,z)=\sfC_0^{{z-y}})}\nonumber\\
&=
\Theta\bigl({\rm e}^{ \phi_W (y) - \phi_W (z) }\bigr) ,
\end{align}
uniformly in $t\leq Ck^\ep$, $W = \lbr w_1 ,w_2 ,w_3\rbr\subset
\partial\Lambda_t$ and $y,z\in \Lambda_{\ep_1 k}$, where the sum is
over $\sfC_0$ compatible with $S^W(t,y)$ and where in the last line we
used classical ratio-mixing properties of subcritical random-cluster
measures~\cite[Theorem 3.4]{Ale98} {and \eqref{eq:ratio-mixing}} 
to compare
$\mu_\calD^\sff(\sfC_0(t,y)=\sfC_0)$ and
$\mu_\calD^\sff(\sfC_0(t,z)=\sfC_0^{z-y})$.

The function $\phi_W$  has a non-degenerate quadratic minimum at
some $x_{\rm min}(W)$ (see~\cite[Lemma 3]{CamGia09}). 
In view of the homogeneity of $\tau$, a quadratic expansion around $x_{\rm min}$
yields
\begin{equation}\label{eq:quadratic0}
\phi_W (y) - \phi_W (x_{\rm min})
=
\Theta\Bigl( \frac{\abs{y - x_{\rm min}}^2}{k}\Bigr) ,
\end{equation}
uniformly in all situations in question. 
Since $|\phi(y)-\phi_W(y)|=O(k^\ep)$, its minimizers $x_{\rm min} (W)$ solve 
\[
  F(x, W ) \df \nabla_x\phi_W (x ) = 0.
\] 
Since ${\rm  Hess} (\phi )$ is non-degenerate at $x$, the implicit function 
theorem applies. As a result,
$|x_{\rm min} (W) -x|=O(\sum_i \abs{w_i }) = O (k^\ep )$ uniformly in all $W$ in 
question.
This fact, together with~\eqref{eq:quadratic0}, yields
\begin{equation}\label{eq:quadratic1}
\phi_W (y)-{\phi_W(x ) }
=
\Theta\Bigl( \frac{\abs{y - x}^2}{k}+k^{2\ep-1}\Bigr) .
\end{equation}
Since there are at most $O(k^{3\ep})$ possible choices for $W$ and $O(k^\ep)$
possible choices for $t$, we deduce from~\eqref{eq:Ratios}
and~\eqref{eq:quadratic1} that
\begin{equation}
\label{eqr}
\frac1{O(k^{4\ep})} \exp\bigl(-{C_1}\frac{|y-x|^2}{k}\bigr)
\leq
\frac{ \mu_{\calD }^\sff ( \calS_\ep(y) )} { \mu_{\calD }^\sff (
\calS_\ep({x} ) ) }
\leq
O(k^{4\ep}) \exp\bigl(-{C_2}\frac{|y-x|^2}{k}\bigr) .
\end{equation}
Above $\calS_\ep({x} )$ means in fact $\calS_\ep(\lfloor x\rfloor )$.
We can now compute
\begin{align}\label{ratio}
\mu_\calD^\sff(S_\ep(y) \,\vert\,
E(u_1,u_2,u_3,x))
&=
\frac{\mu_\calD^\sff(S_\ep(y))}{\mu_\calD^\sff(S_\ep(x 
))}\cdot\frac{\mu_\calD^\sff(S_\ep(x ))}{\mu_\calD^\sff(E(u_1,u_2,u_3,x))}
\nonumber\\
&\leq
O(k^{4\ep})\exp\Bigl(-C_2\frac{|y-x|^2}{k}\Bigr)\frac{\mu_\calD^\sff(S_\ep(x
))}{\mu_\calD^\sff(E(u_1,u_2,u_3,x))}
\end{align}
where we used the second inequality in~\eqref{eqr}. In order to see that the
rightmost term in~\eqref{ratio} is of the right order, observe that $|y-x|\le
k^{1/2-\ep}$ implies that ${\rm e}^{-C_2|y-x|^2/k}$
is of order 1, and therefore the ratio in~\eqref{eqr} is smaller than
$O(k^{4\ep})$. Therefore, by looking at the $k^{1-2\ep}$ sites which are at
distance at most $k^{1/2-\ep}$ from $x$, we deduce, using the first inequality
in~\eqref{eqr}, that
\[
\mu_\calD^\sff(S_\ep(x ))
\leq
O(k^{-1+6\ep}) \sum_{y\in\Lambda_{k^{1/2-\ep}}(x )}
\mu_\calD^\sff(S_\ep(y))
\leq
O(k^{-1+8\ep})\mu_\calD^\sff(E(u_1,u_2,u_3,x)),
\]
where in the second inequality, we used the fact that in a given configuration
there are at most $O(k^{2\ep})$ sites $y$ such that the corresponding events
$\calS_\ep(y)$ occur. This implies that
\[
\frac{\mu_{\calD}^\sff \bigl( \calS_\ep(x)
\bigr)}{\mu^\sff_\calD\bigl(E(u_1,u_2,u_3,x)\bigr)}
\leq
O(k^{8\ep-1}).
\]
Together with~\eqref{ratio}, we obtain~\eqref{eq:stweights}.
\end{proof}
Lemmas~\ref{split-bound} and~\ref{lem:stweights} imply that
\begin{multline}
\label{eq:ysum}
\mu_{\calD }^\sff \bigl( \sfC\cap \Lambda_{k^\ep}\neq \emptyset \bigm|
E(u_1,u_2,u_3,x) \bigr)
\leq \\
O(k^{12\ep-1}) \sum_{y\in\Lambda_{\ep_1 k} (x)} {\rm e}^{-\kappa \abs{y-x}^2/k}
\mu_{\calD}^\sff \bigl( \sfC\cap\Lambda_{k^\ep}\neq \emptyset \bigm|
\calS_\ep(y) \bigr) + O\bigl( {\rm e}^{-k^\ep} \bigr) . 
\end{multline}
It remains to provide an upper bound on $\mu_{\calD}^\sff\bigl(\sfC\cap
\Lambda_{k^\ep}\neq \emptyset \bigm| \calS_\ep(y)\bigr)$. There are two cases to
consider:
\medbreak
\noindent
\case{1}: $y\in\Lambda_{2Ck^\ep}$. In this case, we simply use
\begin{equation}\label{eq:case1}
\mu_{\calD}^\sff\bigl( \sfC\cap\Lambda_{k^\ep}\neq\emptyset \bigm| \calS_\ep (y)
\bigr) \leq 1.
\end{equation}
The total contribution to the right-hand side of~\eqref{eq:ysum} is then bounded
by $O(k^{14\ep-1})$, which is negligible with respect to our target
estimate~\eqref{eq:ThreeReduction}.
\medbreak
\noindent
\case{2}: $y\not \in \Lambda_{2Ck^\ep}$. In this case, $\Lambda_{k^\ep}$ can
intersect at most one of the $\Lambda_{Ck^\ep}(y) + \calY_i$, and therefore can
be hit by only one cluster $\sfC_i$.  {Without loss of generality, let us assume
that $\sfC_i=\sfC_1$. Conditioning on the smallest $t$ such that $\calS(t,y)$
occurs as well as on $\sfC_0$, $\sfC_2$ and $\sfC_3$, the cluster $\sfC_1$ obeys,
as was explained after \eqref{eq:OZe}, a diffusive scaling. In particular,
\[
\mu_{\calD }^\sff\bigl(z \in \sfC_1 \bigm| S(t,y),\sfC_0,\sfC_2,\sfC_3 \bigr)
=
O\Bigl(\frac{1}{\sqrt{|v_1-z|}} \, \exp\Bigl[ -\kappa' \frac{\dd_\tau
(z,[v_1,u_1])^2}{\abs{v_1-z}}\Bigr]\Bigr).
\]
In the previous inequality,  $v_1=v_1(t,y)$.
We find: 
\begin{align}
\mu_{\calD }^\sff \bigl( \sfC\cap\Lambda_{k^\ep}\neq \emptyset \bigm|
S(t,y),\sfC_0,\sfC_2,\sfC_3 \bigr)
&\leq
\sum_{z\in \partial \Lambda_{k^\ep}}\mu_{\calD }^\sff\bigl(z \in \sfC_1 \bigm|
S(t,y),\sfC_0,\sfC_2,\sfC_3 \bigr)\nonumber\\ 
&\leq
\sum_{z\in \partial \Lambda_{k^\ep}} O\Bigl(\frac{1}{\sqrt{|v_1-z|}} \,
\exp\Bigl[ -\kappa' \frac{\dd_\tau (z,[v_1,u_1])^2}{\abs{v_1-z}}\Bigr]
\Bigr)\nonumber\\
&=
O\Bigl(\frac{k^\ep}{\sqrt{\abs{y}}} \, \exp\bigl\{ -\kappa^{\prime\prime} \frac{\dd_\tau
(0,[y,u_1])^2}{\abs{y}}\bigr\} \Bigr)
\label{eq:Clusterhit}.
\end{align}
In the last line, we used the fact that $y, v_1\notin \Lambda_{Ck^\ep}$ and
$|v_1-y|\le Ck^\ep$. Let us substitute~\eqref{eq:Clusterhit} into the sum
on the right-hand side of~\eqref{eq:ysum} to obtain
\begin{multline}
\mu_{\calD }^\sff \bigl( \sfC\cap \Lambda_{k^\ep}\neq \emptyset \bigm|
E(u_1,u_2,u_3,x) \bigr) \leq O({\rm
e}^{-k^\varepsilon})+O(k^{14\varepsilon-1})\\
+ O(k^{13\ep-1}) \sum_{y\in\Lambda_{\ep_1 k} (x)\setminus \Lambda_{2C k^{\ep}}}
\frac{1}{\sqrt {|y|}}\exp\Big[-\kappa
\frac{\abs{y-x}^2}{k}-\kappa^{\prime\prime}\frac{\dd_\tau (0,[y,u_1])^2}{|y|}\Big].
\end{multline}
After a simple estimate, one sees easily that the sum on the right is 
bounded above as
\[
2\sum_{y\in\Lambda_{k^{1/2+\ep}}(x)\setminus \Lambda_{2C k^{\ep}}} \frac{1}{\sqrt
{|y|}}\exp\Big[-\kappa^{\prime\prime}\frac{\dd_\tau (0,[y,u_1])^2}{|y|}\Big]
\leq
\sum_{\ell=\max\{2Ck^\ep,|x|-k^{1/2+\ep}\}}^{|x|+k^{1/2+\ep}} \frac{O(\sqrt
\ell)}{\sqrt \ell}=O(k^{\frac12+\ep}),
\]
uniformly in $x$. In order to obtain the first inequality, we used the fact that
$\exp(-\kappa |y-x|^2/k)$ is very small for sites outside of
$\Lambda_{k^{1/2+\ep}}(x)$. For the second, observe that sites $y$ at distance
$\ell$ contributing substantially to this sum must satisfy the condition that
$0$ is at distance $O(\sqrt{\ell})$ of $[y,u_1]$. There are $O(\sqrt{\ell})$ of
them. This concludes the proof.
  
 
\paragraph{Acknowledgements.} H. D.-C. was supported by the EU Marie-Curie RTN
CODY, the ERC AG CONFRA, as well as by the Swiss National Science Foundation. The research of D.I. was
supported by the Israeli Science Foundation (grant No 817/09). L.C. and Y.V.
were partially supported by the Swiss National Science Foundation.

\begin{flushright}
\footnotesize\obeylines
  \textsc{D\'epartement de Math\'ematiques}
  \textsc{Universit\'e de Gen\`eve}
  \textsc{Gen\`eve, Switzerland}
  \textsc{E-mail:} \texttt{loren.coquille@unige.ch; hugo.duminil@unige.ch; yvan.velenik@unige.ch}

  \textsc{Faculty of IE\&M}
  \textsc{Technion}
  \textsc{Haifa, Israel}
  \textsc{E-mail:} \texttt{ieioffe@ie.technion.ac.il}
\end{flushright}
\end{document}